\documentclass{amsart} 
\usepackage[utf8]{inputenc}
\usepackage{amssymb,amsthm}
\usepackage{mathtools}\mathtoolsset{showonlyrefs}
\usepackage{microtype}
\usepackage[inline]{enumitem}
\usepackage{tabto} 
\usepackage{mathrsfs}
\usepackage{geometry}
\usepackage{tikz-cd}
\usepackage{csquotes} 
\usepackage{hyperref}

\newcommand{\Z}{\mathbb Z}
\newcommand{\Q}{\mathbb Q}
\newcommand{\R}{\mathbb R}
\newcommand{\F}{\mathbb F}
\newcommand{\field}{F}
\newcommand{\filtration}{\mathcal F}
\newcommand{\val}{\operatorname{val}}
\newcommand{\eg}{\textit{e.g.}, }
\newcommand{\ie}{\textit{i.e.}, }
\newcommand{\red}{\mathrm{red}}
\renewcommand{\setminus}{\smallsetminus}

\newcommand{\op}{\mathrm{op}}

\newcommand{\apartment}{\mathscr A}

\newcommand{\chamber}{\mathfrak C}
\newcommand{\hyperplanes}{\mathfrak H}

\newcommand{\X}{\mathrm X}
\newcommand{\aff}{\mathrm{aff}}
\newcommand{\vnu}{{}^\mathrm{v}\nu}
\newcommand{\pairing}{\langle\,\cdot\,{,}\,\cdot\,\rangle}
\newcommand{\scalar}{(\,\cdot\,{,}\,\cdot\,)}
\newcommand{\FGrp}{\mathbf{FGrp}}
\newcommand{\Grp}{\mathbf{Grp}}
\newcommand{\Hecke}{\mathcal H}
\newcommand{\Mod}[1]{\operatorname{Mod}\text{-}#1}
\newcommand{\module}[1]{\mathfrak{#1}}

\newcommand{\ol}[1]{\overline{#1}}
\newcommand{\alg}[1]{\mathbf{#1}}
\newcommand{\set}[2]{\left\{#1\,\middle|\,#2\right\}}
\newcommand{\wt}[1]{\widetilde{#1}}

\newcommand{\longtwoheadrightarrow}%
{\relbar\joinrel\twoheadrightarrow}
\newcommand{\longhookrightarrow}%
{\lhook\joinrel\relbar\joinrel\rightarrow}

\DeclareMathOperator{\Aut}{Aut}
\DeclareMathOperator{\diag}{diag}
\DeclareMathOperator{\End}{End}
\DeclareMathOperator{\GL}{GL}
\DeclareMathOperator{\gr}{gr}
\DeclareMathOperator{\Hom}{Hom}
\DeclareMathOperator{\id}{id}
\DeclareMathOperator{\Ind}{Ind}
\DeclareMathOperator{\Image}{Im}
\DeclareMathOperator{\Ker}{Ker}
\DeclareMathOperator{\pr}{pr}
\DeclareMathOperator{\Rep}{Rep}
\DeclareMathOperator*{\dotimes}{\otimes}

\newtheorem{prop}{Proposition}[section]

\newtheorem{thm}[prop]{Theorem}
\newtheorem*{thmintro}{Theorem}
\newtheorem{lem}[prop]{Lemma}
\newtheorem{cor}[prop]{Corollary}

\theoremstyle{definition}
\newtheorem*{defn}{Definition}
\newtheorem*{claim*}{Claim}

\theoremstyle{remark}
\newtheorem*{rmk}{Remark}
\newtheorem*{ex}{Example}
\newtheorem{rmk*}[prop]{Remark}
\newtheorem*{nota}{Notation}


\title{Parabolic Induction via the Parabolic pro-$p$ Iwahori--Hecke Algebra}
\author{Claudius Heyer}
\address{Mathematisches Institut, Westf\"alische Wilhelms-Universit\"at
M\"unster, Einsteinstra\ss{}e 62, D-48149 M\"unster, Germany}
\email{cheyer@uni-muenster.de}
\subjclass[2020]{11E95, 20C08, 20G25}

\begin{document}
\begin{abstract} 
Let $\mathbf{G}$ be a connected reductive group defined over a locally compact
non-archimedean field $F$, let $\mathbf{P}$ be a parabolic subgroup with Levi
$\mathbf{M}$ and compatible with a
pro-$p$ Iwahori subgroup of $G := \mathbf{G}(F)$. Let $R$ be a
commutative unital ring.

We introduce the parabolic pro-$p$ Iwahori--Hecke $R$-algebra
$\mathcal{H}_R(P)$ of $P := \mathbf{P}(F)$ and construct two $R$-algebra morphisms
$\Theta^P_M\colon \mathcal{H}_R(P)\to \mathcal{H}_R(M)$ and $\Xi^P_G\colon
\mathcal{H}_R(P) \to \mathcal{H}_R(G)$ into the pro-$p$ Iwahori--Hecke
$R$-algebra of $M := \mathbf{M}(F)$ and $G$, respectively. We prove that the
resulting functor $\Mod{\mathcal{H}_R(M)} \to \Mod{\mathcal{H}_R(G)}$ from the
category of right $\mathcal{H}_R(M)$-modules to the category of right
$\mathcal{H}_R(G)$-modules (obtained by pulling back via $\Theta^P_M$ and
extension of scalars along $\Xi^P_G$) coincides with the parabolic induction due
to Ollivier--Vign\'eras.

The maps $\Theta^P_M$ and $\Xi^P_G$ factor through a common subalgebra
$\mathcal{H}_R(M,G)$ of $\mathcal{H}_R(G)$ which is very similar
to $\mathcal{H}_R(M)$. Studying these algebras $\mathcal{H}_R(M,G)$ for varying
$(M,G)$ we prove a transitivity property for tensor products. As an application
we give a new proof of the transitivity of parabolic induction.
\end{abstract} 
\maketitle
\tableofcontents

\section{Introduction} 
\subsection{Motivation} 
Let $\alg G$ be a connected reductive group over a locally compact
non-archimedean field $\field$ with residue field of characteristic $p>0$. In
this introduction all representations will be on vector spaces over a fixed
field $k$. Given a parabolic subgroup $\alg P$ of $\alg G$
with Levi $\alg M$, parabolic induction is a process to obtain smooth
representations of $G\coloneqq \alg G(\field)$ from smooth representations of
$M\coloneqq \alg M(\field)$. More precisely, it is obtained as the composite
\begin{equation}\label{intro:parabolic-induction}
\begin{tikzcd}[row sep=0pt]
\Rep_k(M) \ar[r] & \Rep_k(P) \ar[r] & \Rep_k(G),\\
V \ar[r,mapsto] & V \ar[r,mapsto] & \Ind_P^G V.
\end{tikzcd}
\end{equation}
Here, $V$ is first viewed as a smooth representation of $P\coloneqq \alg
P(\field)$ by
letting the unipotent radical $U_P\coloneqq \alg U_{\alg P}(\field)$ of $P$ act
trivially.
Then we form the space $\Ind_P^GV$ of locally constant functions $f\colon G\to
V$ satisfying $f(\gamma g) = \gamma\cdot f(g)$ for all $\gamma\in P$ and $g\in
G$. Parabolic induction plays a fundamental role in classifying the smooth
admissible representations of $G$. 

A second important method to study smooth representations of $G$ is via the
functor of $K$-invariants $V\mapsto V^K$, for $K$ any compact open subgroup of
$G$. The ring of $G$-equivariant endomorphisms
\[
H_k(K,G) \coloneqq \End_{k[G]}\bigl(k[K\backslash G]\bigr)
\]
of the space $k[K\backslash G]$ of maps $K\backslash G\to k$ with finite
support acts on $V^K\cong \Hom_{k[G]}(k[K\backslash G], V)$ by
Frobenius reciprocity. In this way one
obtains a functor
\begin{equation}\label{eq:invariants}
\Rep_k(G) \longrightarrow \Mod{H_k(K,G)},\quad V\longmapsto V^K
\end{equation}
from $\Rep_k(G)$ into the category of right modules over the $k$-algebra
$H_k(K,G)$. If $k$ has characteristic $p$, it suffices to restrict attention to
a pro-$p$ Iwahori subgroup
$I_1$ of $G$; this is because for $K= I_1$ the functor \eqref{eq:invariants}
sends non-zero representations to non-zero modules. One is therefore led to
study the modules over the pro-$p$ Iwahori--Hecke algebra
$\Hecke_k(G)\coloneqq H_k(I_1,G)$.
A systematic study of $\Hecke_k(G)$ was carried out by Vign\'eras in
\cite{Vigneras.2005} (for $\field$-split $\alg G$) and \cite{Vigneras.2016} (for
arbitrary $\alg G$). 

Again, classifying the simple modules of $\Hecke_k(G)$ is accomplished by
studying induction functors
\[
\Mod{\Hecke_k(M)} \longrightarrow \Mod{\Hecke_k(G)},
\]
for parabolic subgroups $\alg P = \alg M\alg U_{\alg P}$ of $\alg G$ that are
compatible with $I_1$, which is
compatible with parabolic induction via \eqref{eq:invariants}. Such a functor
was defined and studied by Ollivier \cite{Ollivier.2010} for $\alg G =
\GL_n$, and for general $\alg G$ by Ollivier--Vign\'eras
\cite{Ollivier-Vigneras.2018}, Vign\'eras \cite{Vigneras.2015}, and
Abe~\cite{Abe.2016a,Abe.2016b}. Explicitly, this functor is given by
\begin{equation}\label{eq:hecke-induction}
\module m\longmapsto \module m\otimes_{\Hecke_k(M^+)} \Hecke_k(G),
\end{equation}
where $\Hecke_k(M^+)$ is a certain subalgebra of $\Hecke_k(M)$ embedding
naturally into $\Hecke_k(G)$. This algebra $\Hecke_k(M^+)$ depends on $I_1\cap
\alg U_{\alg P}(\field)$, hence also on $P$. However, the analogy between
\eqref{intro:parabolic-induction} and \eqref{eq:hecke-induction} is not as
strong as one might hope for. Let us recall the reason why the parabolic $P$
shows up in \eqref{intro:parabolic-induction}. One could directly define an
induction functor $\Rep_k(M)\to \Rep_k(G)$ by $V\mapsto \Ind_M^GV$. But this
functor is very difficult to handle, because the coset space $G/M$ and hence the
representation $\Ind_M^GV$ is ``too large''. On the other hand, the quotient
$G/P$ is compact which makes it possible to effectively study $\Ind_P^GV$. In
this light it is surprising that \eqref{eq:hecke-induction} is defined using the
small algebra $\Hecke_k(M^+)$. Instead, one would expect to use as big an algebra
as possible to define \eqref{eq:hecke-induction}; the parabolic Hecke algebra
$\Hecke_k(P) \coloneqq H_k(I_1\cap P,P)$ is a natural candidate although it
is not at all obvious how this can be achieved.

To summarize, this article is motivated by the following questions:
\begin{enumerate}[label=(Q\arabic*)]
\item\label{intro:i} Can we replace $\Hecke_k(M^+)$
in~\eqref{eq:hecke-induction} by the algebra $\Hecke_k(P)$?
\item\label{intro:ii} Assume the answer to~\ref{intro:i} is affirmative. Is the
functor
\[
\Mod{\Hecke_k(M)} \longrightarrow \Mod{\Hecke_k(G)},\quad \module m \longmapsto
\module m\otimes_{\Hecke_k(P)} \Hecke_k(G)
\]
naturally isomorphic to \eqref{eq:hecke-induction}?
\end{enumerate}
\subsection{Main results} 
We answer question~\ref{intro:i} positively even for $k$ an arbitrary
commutative unital ring. This amounts to constructing two $k$-algebra
homomorphisms $\Theta^P_M\colon \Hecke_k(P)\to \Hecke_k(M)$ and $\Xi^P_G\colon
\Hecke_k(P) \to \Hecke_k(G)$. 

The map $\Theta^P_M$ (see Proposition~\ref{prop:Theta}) exists quite generally
and is induced by the canonical projection map $k[I_1\cap P\backslash P]\to
k[I_1\cap M\backslash M]$. Its origins
can be traced back to the works of Andrianov, see, \eg
\cite[(3.3)]{Andrianov.1977} or \cite[Definition of $\Phi$ before
Chapter~3, Proposition~3.28]{Andrianov.1995}, who seems to have been the first
to study ``parabolic Hecke algebras'', albeit in a different context.

The main contribution of this work lies in the construction of $\Xi^P_G$.
Observing that the image of $\Theta^P_M$ contains $\Hecke_k(M^+)$, the
idea is to try to extend the embedding $\xi^+\colon \Hecke_k(M^+)\to
\Hecke_k(G)$ 
to $\Image(\Theta^P_M)$ and then to define $\Xi^P_G$ as the composite
$\Hecke_k(P)\xrightarrow{\Theta^P_M} \Image(\Theta^P_M)\to \Hecke_k(G)$.
However, since the goal for $\Xi^P_G\colon \Hecke_k(P)\to
\Hecke_k(G)$ is to have as large image as possible, this approach will not
always yield optimal results. Since in this approach $\Xi^P_G$ factors through a
subalgebra of $\Hecke_k(M)$, the best we can expect is for the image of
$\Xi^P_G$ to be canonically isomorphic to $\Hecke_k(M)$ as a $k$-module. 
If $k$ happens to be $p$-torsionfree, this is indeed the case. We prove:

\begin{thmintro}[Proposition~\ref{prop:Xi}] 
Assume that $k$ is $p$-torsionfree. The embedding $\xi^+\colon
\Hecke_k(M^+)\to \Hecke_k(G)$ extends to an injective $k$-algebra morphism
$\Image(\Theta^P_M)\to \Hecke_k(G)$. Moreover, this extension is
unique and $\Image(\Theta^P_M)$ is the maximal subalgebra of $\Hecke_k(M)$ with
this property.
\end{thmintro}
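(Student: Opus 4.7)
The strategy is to realize $\Image(\Theta^P_M)$ as sitting inside an Ore localization of $\Hecke_k(M^+)$ at a ``strongly positive'' central Ore set, and then to transport this localization into $\Hecke_k(G)$ via $\xi^+$.

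First, I would isolate a multiplicatively closed subset $Z^+\subseteq \Hecke_k(M^+)$, consisting of Hecke operators $T_\mu$ attached to strictly dominant translations $\mu$ in the center of $M$ modulo a maximal compact subgroup. Two key facts should hold: elements of $Z^+$ are central in $\Image(\Theta^P_M)$, yielding a genuine two-sided localization; and every element of $\Image(\Theta^P_M)$ admits a presentation $a\cdot z^{-1}$ with $a\in\Hecke_k(M^+)$ and $z\in Z^+$. The second point should rest on an explicit description of $\Image(\Theta^P_M)$ as the span of Hecke operators indexed by those $m\in M$ one-sidedly stabilizing $I_1\cap U_P$, combined with the fact that multiplying any such basis element on the right by a sufficiently dominant $T_\mu$ pushes it into $\Hecke_k(M^+)$.

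Next, I would verify that $\xi^+(z)$ is a non-zero-divisor in $\Hecke_k(G)$ for every $z\in Z^+$. This is the step where the hypothesis that $k$ is $p$-torsionfree enters essentially, because it ensures that the relevant Hecke-algebra structure constants (ultimately arising from orders of finite residue fields) are non-zero-divisors in $k$. With this in hand, I can formally define
\[
\Xi^P_G\colon \Image(\Theta^P_M)\longrightarrow \Hecke_k(G),\qquad a z^{-1}\longmapsto \xi^+(a)\xi^+(z)^{-1},
\]
a priori into a right Ore localization of $\Hecke_k(G)$ at $\xi^+(Z^+)$. Independence of the presentation, the $k$-algebra morphism property, and the extension property $\Xi^P_G|_{\Hecke_k(M^+)} = \xi^+$ follow from the universal property of Ore localizations. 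Injectivity is then immediate from injectivity of $\xi^+$ on $\Hecke_k(M^+)$.

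Uniqueness is formal: any algebra extension $\Xi'$ must satisfy $\Xi'(x)\xi^+(z)=\xi^+(xz)=\xi^+(a)$, forcing $\Xi'(x)=\xi^+(a)\xi^+(z)^{-1}$ since $\xi^+(z)$ is a non-zero-divisor. For maximality, given any subalgebra $S\subseteq\Hecke_k(M)$ containing $\Hecke_k(M^+)$ on which $\xi^+$ admits an extension $\Xi'$, any $y\in S$ must satisfy $yz\in\Hecke_k(M^+)$ for sufficiently dominant $z\in Z^+$ (since $\Xi'(y)\xi^+(z)=\xi^+(yz)$ must lie in the image of $\xi^+$); this containment property should characterize exactly $\Image(\Theta^P_M)$, yielding $S\subseteq\Image(\Theta^P_M)$. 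The main obstacle I foresee is confirming that $\Xi^P_G$ actually lands in $\Hecke_k(G)$ rather than only in its formal Ore localization: this is not automatic from the universal property and will require an explicit basis-level calculation, using length additivity on the submonoid $M^+$ and the commutation of the $T_\mu$ with the relevant Iwahori--Matsumoto generators, to realize $\xi^+(a)\xi^+(z)^{-1}$ as an honest element of $\Hecke_k(G)$.
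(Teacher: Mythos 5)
Your overall strategy (realize the extension as an Ore localization of $\Hecke_k(M^+)$ at central dominant elements) is the same one the paper uses: the paper inverts $T_a$, for $a$ strictly positive, inside $\Hecke_{k[p^{-1}]}(G)$ and pulls back. But there are two genuine gaps.

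\textbf{First gap: landing in $\Hecke_k(G)$.} You flag this as ``the main obstacle,'' and you are right, but the tools you propose (``length additivity on $M^+$'' and ``commutation of $T_\mu$ with Iwahori--Matsumoto generators'') are not what carries the day. The mechanism is the basis of $\Image(\Theta^P_M)$ given by the elements $\mu_{U_P}(w)\,T^M_w$ for $w\in W_M(1)$ (Corollary~\ref{cor:Image-basis}), combined with the Fundamental Lemma
\[
q_{v,w}\,T_v^{-1}T_{vw}\ =\ T_w + \text{(lower terms in the Bruhat order)}\quad\text{in }\Hecke_\Z(G),
\]
and the identity $q_{n\lambda,w}=\mu_{U_P}(w)$ coming from Corollary~\ref{cor:mu-prop} when $\lambda$ is central in $M$. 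The scalar $\mu_{U_P}(w)$ in the basis element exactly cancels the $q_{n\lambda,w}^{-1}$ produced by inverting $T_{n\lambda}$, so that $\wt\xi^+(\mu_{U_P}(w)T^M_w)=T_w+\sum_{w'<w}c_{w'}T_{w'}\in\Hecke_k(G)$. Without this cancellation, $\wt\xi^+(T^M_w)$ genuinely lives only in $\Hecke_{k[p^{-1}]}(G)$. This also immediately gives the injectivity (unitriangular on a basis), which in your write-up is asserted but not justified (injectivity of $\xi^+$ on $\Hecke_k(M^+)$ does not formally propagate through the localization into $\Hecke_k(G)$).

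\textbf{Second gap: the maximality argument is circular and would prove too much.} You claim that ``$yz\in\Hecke_k(M^+)$ for sufficiently dominant $z$'' characterizes $\Image(\Theta^P_M)$. It does not: that property holds for \emph{every} $y\in\Hecke_k(M)$ (indeed, $\Hecke_k(M)$ is the localization of $\Hecke_k(M^+)$ at the $T_a$), whereas $\Image(\Theta^P_M)$ is a proper subalgebra whenever some $\mu_{U_P}(w)>1$. Also, the step ``$\Xi'(y)\xi^+(z)=\xi^+(yz)$ must lie in the image of $\xi^+$'' assumes $yz\in\Hecke_k(M^+)$, which is what you are trying to establish. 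The correct maximality statement is that $\Image(\Theta^P_M)=(\wt\xi^+)^{-1}\bigl(\Hecke_k(G)\bigr)$ inside $\Hecke_{k[p^{-1}]}(M)$. The paper proves the nontrivial inclusion $(\wt\xi^+)^{-1}(\Hecke_k(G))\subseteq\Image(\Theta^P_M)$ by a downward induction on Bruhat-maximal terms of an element $T=\sum x_iT^M_{w_i}$: maximality of $w_k$ forces $x_k\in k\cdot\mu_{U_P}(w_k)$, again via $q_{n\lambda,w_k}=\mu_{U_P}(w_k)$ and the triangular expansion from the Fundamental Lemma. This is the step missing from your proposal.
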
 

Further, Corollary~\ref{cor:Image-basis} shows that $\Image(\Theta^P_M)$
identifies canonically with $\Hecke_k(M)$ as a $k$-module. 
If, on the other hand, $k$ is not $p$-torsionfree, then $\Image(\Theta^P_M)$ is
much smaller than $\Hecke_k(M)$. (For example, if $k$ is a field of
characteristic $p$, then $\Image(\Theta^P_M) = \Hecke_k(M^+)$.) Hence, even if
the assertion of Proposition~\ref{prop:Xi} could be proved without requiring
that $k$ be $p$-torsionfree, defining $\Xi^P_G$ as the composite $\Hecke_k(P)\to
\Image (\Theta^P_M)\to \Hecke_k(G)$ would yield a morphism with small image.
Instead, we make the following crucial definition:
\begin{defn} 
Let $k$ be arbitrary. We define 
\[
\Xi^P_G = \id_k\otimes \Xi^P_{G,\Z} \colon \Hecke_k(P)\to \Hecke_k(G).
\]
Here, $\Xi^P_{G,\Z}$ is the composite $\Hecke_\Z(P) \xrightarrow{\Theta^P_M}
\Image(\Theta^P_M) \to \Hecke_\Z(G)$, where the second arrow is the map in Proposition~\ref{prop:Xi} (for $k = \Z$).
\end{defn} 

In this way the image of $\Xi^P_G$ will always be large, that is, it can be
canonically identified with $\Hecke_k(M)$ as a $k$-module. The assumption on $k$
in Proposition~\ref{prop:Xi} is therefore inconsequential for the rest of the
paper.

Question~\ref{intro:ii} is a consequence of the construction of $\Theta^P_M$ and
$\Xi^P_G$, that is, we prove:
\begin{thmintro}[Theorem~\ref{thm:parabolic-induction}] 
The functor $\Mod{\Hecke_k(M)}\to \Mod{\Hecke_k(G)}$, $\module m\mapsto \module
m\otimes_{\Hecke_k(P)} \Hecke_k(G)$ is naturally isomorphic
to~\eqref{eq:hecke-induction}.
\end{thmintro}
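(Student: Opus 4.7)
The plan is to construct a natural $\Hecke_k(G)$-linear surjection from the Ollivier--Vign\'eras tensor product onto the new tensor product, then upgrade it to an isomorphism via an Ore-style manipulation inside the tensor product.

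\textbf{Reduction to a tensor over $\Image(\Theta^P_M)$.} By the very definition $\Xi^P_G = \mathrm{id}_k\otimes \Xi^P_{G,\Z}$ and the factorization $\Xi^P_{G,\Z}\colon \Hecke_\Z(P)\xrightarrow{\Theta^P_M}\Image(\Theta^P_M)\xrightarrow{\xi}\Hecke_\Z(G)$, with $\xi$ the extension of $\xi^+$ from Proposition~\ref{prop:Xi}, both structure maps involved in the new functor factor through the subalgebra $\Image(\Theta^P_M)\subseteq \Hecke_k(M)$: the left $\Hecke_k(P)$-action on $\module m$ via $\Theta^P_M$ followed by the inclusion $\Image(\Theta^P_M)\hookrightarrow \Hecke_k(M)$, and the right $\Hecke_k(P)$-action on $\Hecke_k(G)$ via $\xi$. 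These factorizations are compatible (each goes through $\Theta^P_M$), so I obtain the canonical identification
\[
\module m\otimes_{\Hecke_k(P)}\Hecke_k(G) \;=\; \module m\otimes_{\Image(\Theta^P_M)}\Hecke_k(G).
\]
Since $\Hecke_k(M^+)\subseteq \Image(\Theta^P_M)$ and $\xi|_{\Hecke_k(M^+)}=\xi^+$, enlarging the balance ring produces a natural $\Hecke_k(G)$-linear surjection
\[
\alpha_{\module m}\colon \module m\otimes_{\Hecke_k(M^+)}\Hecke_k(G)\longtwoheadrightarrow \module m\otimes_{\Image(\Theta^P_M)}\Hecke_k(G),\quad m\otimes h\longmapsto m\otimes h,
\]
evidently natural in $\module m$.

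\textbf{Injectivity via Ore denominators.} The substantive step is to verify that for every $x\in\Image(\Theta^P_M)$, $m\in \module m$ and $h\in\Hecke_k(G)$ the relation $mx\otimes h = m\otimes \xi(x)h$ already holds in $\module m\otimes_{\Hecke_k(M^+)}\Hecke_k(G)$. My plan is to invoke the Ore property underpinning the Ollivier--Vign\'eras construction: $\Hecke_k(M)$ is a localization of $\Hecke_k(M^+)$ at a central multiplicatively closed set of ``strictly positive'' elements, so for the fixed $x$ there exists $z\in\Hecke_k(M^+)$, invertible in $\Hecke_k(M)$, with $zx\in\Hecke_k(M^+)$. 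Given such $z$, one computes
\begin{align*}
m\otimes\xi(x)h
&= (mz^{-1})z\otimes\xi(x)h = mz^{-1}\otimes \xi^+(z)\xi(x)h \\
&= mz^{-1}\otimes \xi^+(zx)h = (mz^{-1})(zx)\otimes h = mx\otimes h,
\end{align*}
using only the $\Hecke_k(M^+)$-balance across $z$ and $zx$, the $\Hecke_k(M)$-action of $z^{-1}$ on $\module m$, and the identity $\xi^+(z)\xi(x) = \xi(zx) = \xi^+(zx)$, which holds because $\xi$ is a ring map restricting to $\xi^+$ on $\Hecke_k(M^+)$ and $zx\in\Hecke_k(M^+)$. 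This gives injectivity of $\alpha_{\module m}$, whence the claimed natural isomorphism.

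\textbf{Main obstacle.} The formal steps above pivot on locating, for each $x\in\Image(\Theta^P_M)$, an Ore denominator $z\in\Hecke_k(M^+)$ with $zx\in\Hecke_k(M^+)$ and $z$ invertible in $\Hecke_k(M)$; this is the one genuinely non-formal input and must be extracted from the Ollivier--Vign\'eras theory of strictly positive central elements. Once this input is in hand, the rest is bookkeeping within the tensor product.
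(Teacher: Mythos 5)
Your strategy (compare the two balanced tensor products, enlarge the balance ring, and use an Ore manipulation with a strictly positive central element) is essentially the same as the paper's, which performs exactly this denominator trick via the element $T^M_{a^n}$. However, there is a genuine gap that makes the proof break down for the rings $k$ of principal interest (those with $p$-torsion, in particular fields of characteristic $p$).

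The problem is that the map $\xi$ you invoke only exists when $k$ is $p$-torsionfree: this is the standing hypothesis of Proposition~\ref{prop:Xi}. Over $\Z$ one does have the factorization $\Xi^P_{G,\Z} = \xi\circ\Theta^P_{M,\Z}$, but after tensoring with $k$ the map $\Xi^P_{G,k}$ factors through $k\otimes_\Z\Image(\Theta^P_{M,\Z})$ — the algebra $\Hecke_k(M,G)$ of section~\ref{sec:transitivity} — and \emph{not} through $\Image(\Theta^P_{M,k})\subseteq\Hecke_k(M)$. When $k$ has $p$-torsion, the natural map $k\otimes_\Z\Image(\Theta^P_{M,\Z})\to\Hecke_k(M)$ has a large kernel (indeed its image is only $\Hecke_k(M^+)$ when $qR=0$), so one can find $T^P\in\Hecke_k(P)$ with $\Theta^P_{M,k}(T^P)=0$ but $\Xi^P_{G,k}(T^P)\neq0$; take $T^P_g$ with $g\in P$, $\nu_M(g)=1$, and $1\neq\mu_{U_P}(g)=\mu_{U_P}(g_M)\equiv0$ in $k$, and note that $\Xi^P_{G,k}(T^P_g)$ has nonzero leading term by \eqref{eq:Xi-explicit}. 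Consequently your opening identification $\module m\otimes_{\Hecke_k(P)}\Hecke_k(G)=\module m\otimes_{\Image(\Theta^P_M)}\Hecke_k(G)$ is false in general (and the right-hand tensor product is not even defined, since $\Hecke_k(G)$ carries no left $\Image(\Theta^P_M)$-action extending $\xi^+$ when $k$ has $p$-torsion). For the same reason the crucial identity $\xi^+(z)\xi(x)=\xi^+(zx)$ cannot be justified by saying ``$\xi$ is a ring map'': there is no such ring map over $k$.

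What is true, and what the paper actually uses, is the following base-changed form of your identity: for $z=T^M_{a^n}$ with $a\in M$ strictly positive and $a^ng_M\in M^+$, one has
\[
\xi^+(T^M_{a^n})\cdot\Xi^P_G(T^P_g)=\xi^+\bigl(T^M_{a^n}\cdot\Theta^P_M(T^P_g)\bigr)=\nu_M(g)\mu_{U_P}(g)\,T_{a^ng_M}\quad\text{in }\Hecke_k(G),
\]
which is exactly Corollary~\ref{cor:Xi}. The point is that this is an identity between two concrete elements of $\Hecke_\Z(G)$ (where $\xi$ \emph{does} exist), and the identity persists after applying $k\otimes_\Z(-)$ because every map involved is natural in the coefficient ring. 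Once you replace your appeal to ``$\xi$ a ring map over $k$'' by this base-change argument, and drop the erroneous intermediate tensor product over $\Image(\Theta^P_M)$ in favor of directly verifying that the $\Hecke_k(P)$-balance relations $m\,\Theta^P_M(T^P)\otimes h=m\otimes\Xi^P_G(T^P)\,h$ already hold in $\module m\otimes_{\Hecke_k(M^+)}\Hecke_k(G)$, your computation becomes the paper's proof almost verbatim.
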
 

As an application we will give a new proof (see
Corollary~\ref{cor:transitivity}) of the transitivity of parabolic
induction originally due to Vign\'eras~\cite[Proposition~4.3]{Vigneras.2015}. 
\subsection{Structure of the paper} 
Section~\ref{sec:notations} is devoted to setting up the notation and reviewing
some parts of Bruhat--Tits theory. This part draws heavily from the original
paper~\cite{Bruhat-Tits.1972} by Bruhat--Tits and from the comprehensive summary
in \cite{Vigneras.2016}.
\medskip

In section~\ref{sec:groups} we will study the group index $\mu_{U_P}(g)
\coloneqq [I_{U_P} : I_{U_P} \cap g^{-1}I_{U_P}g]$ for $g\in P$, where $I_{U_P}
= I_1\cap U_P$. This index appears in the  double coset formula 
(Proposition~\ref{prop:double-coset}) which is used to give an explicit
description of the map $\Theta^P_M$ (Proposition~\ref{prop:Theta}). The results
of subsection~\ref{subsec:technical} are used to prove the estimate
$\mu_{U_P}(g)\ge \mu_{U_P}(g_M)$, where $g_M$ is the image of the projection of
$g$ in $M$, see Proposition~\ref{prop:inequality}. It allows us to give a
concrete
description of a basis of $\Image(\Theta^P_M)$ which will be necessary for the
construction of $\Xi^P_G$. In subsection~\ref{subsec:properties} we explain how
$\mu_{U_P}$ naturally gives rise to a function defined on the Iwahori--Weyl
group $W_M$ of $M$, again denoted $\mu_{U_P}$. Proposition~\ref{prop:mu-prop}
shows that $\mu_{U_P}$ measures how far the length function on $W_M$ deviates
from the length function on the Iwahori--Weyl group $W$ of $G$, and that
$\mu_{U_P}$ is compatible with the Bruhat order on $W_M$. These properties will
become useful in the construction of $\Xi^P_G$ (Proposition~\ref{prop:Xi}) and
in the study of the algebras $\Hecke_k(M,G)$ (section~\ref{sec:transitivity}).
We also obtain new and short proofs of two lemmas due to Abe in
Corollaries~\ref{cor:Abe-1} and~\ref{cor:Abe-2}.
\medskip

Section~\ref{sec:parabolic} contains the main results.
Subsection~\ref{subsec:reminder} gives a short introduction to abstract Hecke
algebras as double coset algebras following \cite[Chapter~3]{Andrianov.1995}.
In subsection~\ref{subsec:parabolic} we describe the map $\Theta^P_M$
explicitly, see Proposition~\ref{prop:Theta}. Together
with the inequality $\mu_{U_P}(g)\ge \mu_{U_P}(g_M)$ this yields a minimal
generating system of $\Image(\Theta^P_M)$ which is even a basis provided the
coefficient ring $R$ is $p$-torsionfree, see Corollary~\ref{cor:Image-basis}.
The main results on the construction of $\Xi^P_G$ and on parabolic induction are
proved in Proposition~\ref{prop:Xi} and Theorem~\ref{thm:parabolic-induction},
respectively.
\medskip

Finally, section~\ref{sec:transitivity} is devoted to applying our previous
results to give a new proof of the transitivity of parabolic induction. The
algebras $\Hecke_k(M,G)$ are introduced and studied in
subsection~\ref{subsec:definitions}. In Proposition~\ref{prop:localization} we
show that these algebras, for varying $G$, are localizations of each other. The
main result is the general Theorem~\ref{thm:tensor} from which we deduce that
parabolic induction is transitive. To finish, we construct in
subsection~\ref{subsec:filtration} a natural filtration on $\Hecke_k(M,G)$
(Proposition~\ref{prop:H(M,G)-filtration}) which might be of
independent interest.
\subsection{Acknowledgments} 
This article constitutes a part of my doctoral
dissertation \cite{Heyer.2019} at the Humboldt University in Berlin. It is a pleasure to thank my advisor Elmar
Gro\ss{}e-Kl\"onne for suggesting the topic and supporting me throughout the
years. My thanks extends to Peter Schneider for his interest in my work. I thank
the organizers of the conference ``Buildings and Affine Grassmannians'', held in
August--September 2019 at the CIRM, for giving me the chance to present my work. I
am especially grateful to the referee for carefully reading this article, for
pointing out a gap, and for providing several helpful comments and suggestions.
During the write-up of this article I was funded by the University of
M\"unster and Germany’s Excellence
Strategy EXC 2044  390685587, Mathematics Münster: Dynamics–Geometry–Structure.

\section{Notations and preliminaries}\label{sec:notations} 
Throughout the article we fix a locally compact non-archimedean field
$\field$ with residue field $\F_q$ of characteristic $p$ and normalized
valuation $\val_{\field}\colon \field \to \Z\cup \{\infty\}$. 

Given a group $G$, a subgroup $H \subseteq G$, and elements $g,h\in G$, we
write
\[
h^g\coloneqq g^{-1}hg,\qquad H^g\coloneqq \set{h^g}{h\in H},\qquad
H_{(g)}\coloneqq H\cap H^g.
\]
We also write $[g,h] \coloneqq ghg^{-1}h^{-1}$ for the commutator of $g$ and
$h$.

The symbol $\bigsqcup$ denotes ``disjoint union''.

Given an algebraic group $\alg H$ over $\field$, we denote by the corresponding
lightface letter $H\coloneqq \alg H(\field )$ its group of $\field$-rational
points. The topology of $\field$ makes $H$ into a topological group. We denote 
\begin{enumerate}[label=$-$]
\item $\alg H^{\circ}$ the identity component of $\alg H$; 
\item $\X^*(H)$ (resp. $\X_*(H)$) the group of algebraic $\field$-characters
(resp. $\field$-cocharacters) of $\alg H$.
\end{enumerate}

Let $\alg G$ be a connected reductive group defined over $\field$. Fix a maximal
$\field$-split torus $\alg T$ and denote by $\alg Z\coloneqq \alg Z_{\alg
G}(\alg T)$ (resp. $\alg N\coloneqq \alg N_{\alg G}(\alg T)$) the
centralizer (resp. normalizer) of $\alg T$ in $\alg G$. The finite
Weyl group $W_0\coloneqq N/Z$ acts on the (relative) root system $\Phi\coloneqq
\Phi(\alg G, \alg T) \subseteq \X^*(T)$ associated with $\alg T$; it identifies
with the Weyl group of $\Phi$. 

\begin{nota} 
Given a subset $\Psi \subseteq \Phi$, we denote $\Psi_\red \coloneqq
\set{\alpha\in \Psi}{\frac12 \alpha \notin \Psi}$ the set of \emph{reduced
roots} in $\Psi$.
\end{nota} 

Let $\alg U_{\alpha}$ be the root group associated with $\alpha\in \Phi$.
Then $\alg U_{2\alpha} \subseteq \alg U_{\alpha}$ whenever $\alpha,2\alpha\in
\Phi$. Fix a minimal parabolic $\field$-subgroup $\alg B$ with Levi
decomposition $\alg B = \alg Z\alg U$. This corresponds to a choice $\Phi^+$ of
positive roots
in $\Phi$, and we have $\alg U = \prod_{\alpha\in\Phi_{\red}^+}\alg U_\alpha$.
In this article parabolic subgroups are always standard and defined over
$\field$. By a Levi subgroup we mean the unique
Levi $\field$-subgroup $\alg M$ of $\alg P$ containing $\alg Z$; this is
expressed by writing $\alg P = \alg M\alg U_{\alg P}$, where $\alg U_{\alg P}$
denotes the unipotent radical of $\alg P$. Conversely, a Levi subgroup $\alg M$
in $\alg G$ determines a unique parabolic group $\alg P_{\alg M}$ with Levi
$\alg M$ and unipotent radical $\alg U_{\alg P_{\alg M}} = \prod_{\alpha\in
(\Phi^+\setminus\Phi_M)_{\red}} \alg U_{\alpha}$.

\subsection{The standard apartment} 
Consider the finite-dimensional $\R$-vector space
\[
V \coloneqq \R\otimes \X_*(T)/\X_*(C),
\]
where $\alg C$ denotes the connected center of $\alg G$. The finite Weyl group
$W_0$ acts on $V$ via the conjugation action on $T$, and the natural pairing
$\pairing\colon V^*\times V \to \R$ is $W_0$-invariant, where $V^*$ denotes the
$\R$-linear dual of $V$. Fix a $W_0$-invariant
scalar product $\scalar$ on $V$. The root system $\Phi$ embeds into $V^*$. 
For each $\alpha\in \Phi$ there exists a unique coroot $\alpha^\vee\in V$ with
$\langle \alpha, \alpha^\vee\rangle = 2$ such that the reflection
\[
s_{\alpha}\colon V^*\longrightarrow V^*,\quad x\longmapsto x - \langle x,
\alpha^\vee\rangle\cdot \alpha
\]
leaves $\Phi$ invariant.

\subsubsection{Valuations} 
A valuation $\varphi = (\varphi_\alpha)_{\alpha\in \Phi}$ on the
root group datum $(Z, (U_\alpha)_{\alpha\in \Phi})$ consists of a family of
functions $\varphi_{\alpha}\colon U_\alpha\to \R \cup \{\infty\}$ satisfying a
list of axioms \cite[(6.2.1)]{Bruhat-Tits.1972}. 
Then $\varphi$ is called \emph{discrete} if the set of values
$\Gamma_\alpha\coloneqq \varphi_\alpha(U_\alpha\setminus\{1\}) \subseteq \R$ is
discrete. 
It is called \emph{special} if $0\in \Gamma_\alpha$ for all
$\alpha\in\Phi_{\red}$. We write
\[
\Gamma'_\alpha \coloneqq
\set{\varphi_{\alpha}(u)}{\text{$u\in U_\alpha\setminus\{1\}$ and
$\varphi_{\alpha}(u) = \sup \varphi_{\alpha}(uU_{2\alpha})$}}.
\]
Then $\Gamma'_\alpha = \Gamma_\alpha$ if $2\alpha\notin \Phi$ and $\Gamma_\alpha
= \Gamma'_\alpha \cup (\frac12 \Gamma_{2\alpha})$ for all $\alpha\in \Phi$
\cite[(6.2.2)]{Bruhat-Tits.1972}.

The group $N$ acts naturally on the set of valuations via
\[
(n.\varphi)_\alpha(u) = \varphi_{w^{-1}(\alpha)}(n^{-1}un),\qquad \text{for
$u\in U_\alpha$, $n\in N$,}
\]
where $w\coloneqq \vnu(n) \in W_0 = N/Z$ and $\vnu\colon N\to N/Z = W_0$ denotes
the projection map. 

The vector space $V$ acts faithfully on the set of valuations via
\[
(\varphi+v)_\alpha(u) = \varphi_\alpha(u) + \langle \alpha,v\rangle,\qquad
\text{for $u\in U_\alpha$, $v\in V$.}
\]
One easily verifies $n.(\varphi+v) = n.\varphi + \vnu(n)(v)$ for $n\in N$, $v\in
V$ \cite[(6.2.5)]{Bruhat-Tits.1972}. 

Given $z\in Z$, there is a unique vector $\nu(z)\in V$ satisfying $z.\varphi =
\varphi + \nu(z)$ \cite[Proposition~(6.2.10), Proof of (i)]{Bruhat-Tits.1972}.
In this way we obtain a group homomorphism
\begin{equation}\label{eq:nu}
\nu\colon Z \longrightarrow V,\quad z\longmapsto \nu(z).
\end{equation}

Restriction of characters realizes $\X^*(Z)$ as a subgroup of finite index in
$\X^*(T)$ \cite[V.2.6. Lemme]{Renard.2010}. Therefore, given $\chi\in \X^*(T)$,
there exists $n\in\Z_{>0}$ with $n\chi \in \X^*(Z)$, and one easily verifies
that the definition
\[
(\val_{\field}\circ \chi)(z) \coloneqq \frac1n\cdot
\val_{\field}\bigl((n\chi)(z)\bigr) \in \R,\qquad \text{for $z\in Z$},
\]
is independent of $n$. We say that $\varphi$ is \emph{compatible with
$\val_{\field}$} if 
\[
\bigl\langle \alpha, \nu(z)\big\rangle = -(\val_{\field}\circ \alpha)(z),\qquad
\text{for all $z\in Z$, $\alpha\in\Phi$.}
\]
\subsubsection{The apartment, hyperplanes, and affine roots} 
From now on we fix a discrete, special valuation $\varphi_0 =
(\varphi_{0,\alpha})_{\alpha\in \Phi}$ on $(Z, (U_\alpha)_{\alpha\in \Phi})$
which is compatible with $\val_{\field}$; it exists by \cite[5.1.20. Theor\`eme
and 5.1.23. Proposition]{Bruhat-Tits.1984}. By \cite[(37)]{Vigneras.2016} the
subgroups
\[
U_{\alpha,r} \coloneqq \set{u\in U_\alpha}{\varphi_{0,\alpha}(u) \ge r},\qquad
\text{for $r\in \R$},
\]
form a basis of compact open neighborhoods of the neutral element in $U_\alpha$,
$\alpha\in \Phi$. 

The \emph{apartment} of $G$ is defined as the affine space
\[
\apartment \coloneqq \set{\varphi_0 + v}{v\in V}
\]
under $V$. It follows from \cite[Proposition (6.2.10)]{Bruhat-Tits.1972} that
the action of $N$ on valuations induces a group homomorphism
\[
\nu\colon N \longrightarrow \Aut \apartment
\]
extending \eqref{eq:nu}.
The fixed $W_0$-invariant scalar
product on $V$ endows $\apartment$ with a Euclidean metric. 

Given $\alpha\in V^*$ and $r\in \R$, we put
\begin{align*}
a_{\alpha,r} &\coloneqq \set{\varphi_0+v\in \apartment}{\langle \alpha,
v\rangle + r \ge 0}\quad \text{and}\\
H_{\alpha,r} &\coloneqq \set{\varphi_0+v\in \apartment}{\langle \alpha, v\rangle
+ r = 0}.
\end{align*}
We call $\Phi^{\aff} \coloneqq \set{a_{\alpha,r}}{\text{$\alpha\in \Phi$ and
$r\in \Gamma'_\alpha$}}$ the set of \emph{affine roots} of $\apartment$ and
denote
\[
\hyperplanes \coloneqq \set{H_{\alpha,r}}{\text{$\alpha\in \Phi$ and $r\in
\Gamma'_\alpha$}} = \set{H_{\alpha,r}}{\text{$\alpha\in \Phi_{\red}$ and $r\in
\Gamma_\alpha$}}
\]
the set of hyperplanes in $\apartment$. 
A connected component of $\apartment \setminus \bigcup\hyperplanes$ is called a
\emph{chamber}. 
Associated with $\varphi_0$ and $\alg B$ there is a unique chamber $\chamber$
determined by $\varphi_0\in \ol{\chamber}$ (topological closure) and 
\[
\chamber \subseteq \set{\varphi_0+v\in \apartment}{\langle \alpha, v\rangle
>0,\quad \text{for all $\alpha\in\Phi^+$}}.
\]
We call $\chamber$ the \emph{fundamental chamber}. 

The action of $N$ on $\apartment$ induces natural actions on $\Phi^\aff$ and on
$\hyperplanes$. Explicitly, we have
\[
n.a_{\alpha,r} = a_{w(\alpha), r - \langle w(\alpha), n.\varphi_0 -
\varphi_0\rangle},
\]
for all $a_{\alpha,r}\in \Phi^\aff$ and $n\in N$, where $w\coloneqq \vnu(n)\in
W_0$. A similar formula holds for $H_{\alpha,r}$. Likewise,
\begin{equation}\label{eq:U-action}
nU_{\alpha,r}n^{-1} = U_{w(\alpha), r - \langle w(\alpha), n.\varphi_0 -
\varphi_0\rangle},
\end{equation}
for all $\alpha\in \Phi$, $r\in \R$, $n\in N$.
\subsubsection{The affine Weyl group} 
Denote $s_H \in \Aut\apartment$ the orthogonal reflection through $H\in
\mathfrak H$ and put 
\[
S(\hyperplanes) \coloneqq \set{s_H}{H\in \hyperplanes}.
\]
Conversely, we denote $H_s\in \hyperplanes$ the hyperplane fixed by $s\in
S(\hyperplanes)$. The \emph{affine Weyl group} $W^\aff$ is the subgroup of $\wt
W\coloneqq \nu(N) \subseteq \Aut \apartment$ generated by $S(\hyperplanes)$.
Notice that $ws_Hw^{-1} = s_{w(H)}$ and $wH_s = H_{wsw^{-1}}$ for all $w\in \wt
W$, $s\in S(\hyperplanes)$, and $H\in \hyperplanes$. 
We denote by $S^{\aff}$ the set of reflections in the walls of
the fundamental chamber $\chamber$. It generates $W^\aff$ as a group. Moreover,
$W^\aff$ acts simply transitively on the set of chambers of $\apartment$. 

The stabilizer
$W_{\varphi_0}$ of $\varphi_0$ in $\wt W$ identifies with $W_0$, because
$\varphi_0$ is special. We obtain semidirect product decompositions
\[
\wt W = W_0\ltimes (\wt W\cap V)\qquad \text{and}\qquad W^\aff = W_0 \ltimes
(W^\aff \cap V),
\]
and $W^\aff \cap V$ is generated by the translations $r\alpha^\vee$, for
$\alpha\in \Phi_\red$ and $r\in \Gamma_\alpha$ \cite[Proposition
(6.2.19)]{Bruhat-Tits.1972}.

By \cite[Proposition~(6.2.22)]{Bruhat-Tits.1972} there exists a unique reduced
root system $\Sigma$ in $V^*$ such that $W^\aff$ is the affine Weyl group of
$\Sigma$, \ie it is the subgroup of $\Aut \apartment$ generated by the
reflections
\[
s_{\alpha,k} \colon \Aut \apartment \longrightarrow \Aut \apartment,\quad
\varphi_0 + v \longmapsto \varphi_0 + v - \bigl(\langle \alpha, v\rangle +
k\bigr)\cdot \alpha^\vee,
\]
for $(\alpha,k) \in \Sigma^\aff \coloneqq \Sigma\times\Z$.
By a suitable scaling we obtain a surjective map
$\Phi \to \Sigma$, $\alpha\mapsto \varepsilon_\alpha \alpha$
between root systems which induces a bijection $\Phi_{\red}\cong \Sigma$. By
\cite[(39) ff]{Vigneras.2016} we have $\varepsilon_\alpha =
\varepsilon_{-\alpha} \in \Z_{>0}$ and $\Gamma_{\alpha} =
\varepsilon_\alpha^{-1} \Z$, for $\alpha\in \Phi_{\red}$, is a group.
\begin{nota}
In order to avoid confusion when working with the two root systems $\Phi_{\red}$
and $\Sigma$ we will write $H_{(\alpha,k)}$ and $U_{(\alpha,k)}$ instead of
$H_{\alpha,k}$ and $U_{\beta, \varepsilon_\beta^{-1}k}$ whenever $\beta\in
\Phi_{\red}$, $\alpha = \varepsilon_\beta\beta$, and $k\in\Z$. 
\end{nota}

Given $n\in N$
with image $w$ in $\wt W$ and $(\alpha,k)\in \Sigma^\aff$, we have
$wH_{(\alpha,k)} = H_{w\cdot (\alpha,k)}$ and 
\begin{equation}\label{eq:action-U}
nU_{(\alpha,k)}n^{-1} = U_{w\cdot (\alpha,k)}.
\end{equation}

\subsection{Parahoric subgroups} 
The pointwise stabilizer of $\varphi_0$, resp. $\chamber$, in the kernel of the
Kottwitz homomorphism $\kappa_G$ \cite[7.1 to 7.4]{Kottwitz.1997} is denoted by
$K$, resp. $I$. We call $I$ the \emph{Iwahori subgroup}; its pro-$p$ Sylow
subgroup $I_1$ is called the \emph{pro-$p$ Iwahori subgroup}. Both $K$ and $I$
are examples of \emph{parahoric subgroups} \cite{Haines-Rapoport.2008}; they are
compact open subgroups of $G$.

We remark that $K$ is a special parahoric subgroup containing $I$, and it
satisfies \cite[(51)]{Vigneras.2016}
\[
K\cap U_{\alpha} = U_{(\alpha,0)},\qquad \text{for all $\alpha\in \Phi$.}
\]

Put $Z_0 \coloneqq Z\cap K = Z\cap I$ \cite[Lemma~4.2.1]{Haines-Rostami.2009}
with pro-$p$ radical $Z_1= Z\cap I_1$. Then $Z_0$ is the unique parahoric
subgroup of $Z$, and $N$ normalizes both $Z_0$ and $Z_1$. The multiplication map
\begin{equation}\label{eq:I1-homeo}
\prod_{\alpha\in -\Sigma^+} U_{(\alpha,1)} \times Z_1 \times \prod_{\alpha\in
\Sigma^+} U_{(\alpha,0)} \xrightarrow{\cong} I_1
\end{equation}
is a homeomorphism \cite[Corollary~3.20]{Vigneras.2016} with respect to any
ordering of the factors.

\subsection{The Iwahori--Weyl group} 
We call
\[
W\coloneqq N/Z_0,\qquad \text{resp.}\qquad W(1)\coloneqq N/Z_1
\]
the \emph{Iwahori--Weyl group}, resp. \emph{pro-$p$ Iwahori--Weyl group}. There
are exact sequences
\[
1 \longrightarrow Z_0/Z_1 \longrightarrow W(1) \longrightarrow W \longrightarrow
1
\]
and
\begin{equation}\label{eq:Lambda-Weyl}
0 \longrightarrow \Lambda \longrightarrow W \longrightarrow W_0 \longrightarrow
1,
\end{equation}
where $\Lambda\coloneqq Z/Z_0$ is a finitely generated abelian group with finite
torsion and the same rank as $\X_*(T)$
\cite[Theorem~1.0.1]{Haines-Rostami.2009}. It is thus written
additively. When viewed as an element of $W$ we use the exponential notation
$e^\lambda$ rather than $\lambda\in \Lambda$ in order to avoid confusion.

Given a subset $X\subseteq W$, we denote $X(1)$ the preimage of $X$ under the
projection $W(1)\to W$.

We remark that the sequence~\eqref{eq:Lambda-Weyl} splits, providing a
semidirect product decomposition
\[
W = \Lambda \rtimes W_0.
\]
In particular, $W_0$ acts on $\Lambda$ via $w(\lambda) = we^{\lambda}w^{-1}$.
The group $\Lambda(1)$ is not abelian in general. Notice that \eqref{eq:nu}
factors through $\Lambda$ (and hence $\Lambda(1)$).

The inclusion $N\hookrightarrow G$ induces bijections
\begin{align}\label{eq:Bruhat-Decomposition}
W_0 &\cong B\backslash G/B, & W&\cong I\backslash G/I, & W(1) &\cong
I_1\backslash G/I_1.
\end{align}

The group $\Omega = \set{u\in W}{u\chamber = \chamber}$ is abelian and acts on
$S^\aff$ by conjugation, and we have a decomposition
\[
W = W^\aff \rtimes \Omega.
\]
The length function $\ell$ on the Coxeter group $(W^\aff,S^\aff)$ extends to a
length function $\ell$ on
$W$ if we define $\ell(wu) = \ell(w)$ for $w\in W^\aff$, $u\in \Omega$. By
inflation we also obtain a length function $\ell$ on $W(1)$.

We denote by $\le$ the Bruhat order on $W^\aff$. It extends to the Bruhat order
$\le$ on $W$ if we put
\[
wu \le w'u'\iff w\le w'\text{ and }u = u',\qquad \text{for $w,w'\in
W^\aff$, $u,u'\in \Omega$.}
\]
We define $v<w$ as $v\le w$ and $v\neq w$. By inflation, we obtain the Bruhat
order $<$ on $W(1)$: given $\wt v, \wt w\in W(1)$ with
image $v,w\in W$, respectively, we define
\[
\wt v < \wt w \iff v < w.
\]

\subsection{The integers \texorpdfstring{$q_w$}{qw}} 
Given $n\in N$ with image $w$ in $W$ (or $W(1)$), one defines
\[
q_w\coloneqq \big\lvert InI/I\big\rvert = \big\lvert I_1nI_1/I_1\big\rvert.
\]
An application of \cite[Proposition~3.38]{Vigneras.2016} shows
\[
q_w = q_{s_1}\dotsm q_{s_{\ell(w)}}
\]
whenever $w = s_1\dotsm s_{\ell(w)}u$ with $s_i\in
S^\aff$ and $u\in \Omega$ (or $s_i\in S^\aff(1)$ and $u\in \Omega(1)$). Given
$s\in S^\aff$, write $H_s = H_{\beta,r} = H_{(\alpha,k)}$ with $\beta\in
\Phi^+_\red$, $r\in \Gamma_\beta$ and $\alpha = \varepsilon_\beta\beta\in
\Sigma^+$, $k = \varepsilon_\beta r\in\Z$. Then
\begin{equation}\label{eq:qs}
q_s = \big\lvert U_{\beta,r}/U_{\beta,r+}\big\rvert = \big\lvert
U_{(\alpha,k)}/U_{(\alpha,k+1)}\big\rvert,
\end{equation}
where $U_{\beta,r+} \coloneqq \bigcap_{r'>r} U_{\beta,r'}$. Indeed, if $r\in
\Gamma_\beta \setminus \Gamma'_\beta$, then $2\beta\in\Phi^+$ and $2r\in
\Gamma'_{2\beta}$. In this case we have $U_{\beta,r+}\cdot U_{2\beta,2r} =
U_{\beta,r}$, and \cite[Lemma~3.8]{Vigneras.2016} yields an isomorphism
\[
U_{\beta,r}/U_{\beta,r+} \cong U_{2\beta,2r}/U_{2\beta,2r+}.
\]
Now, the first equality in \eqref{eq:qs} follows from
\cite[Corollary~3.31]{Vigneras.2016} while the second equality is clear.

Notice that $q_s = q_{s'}$ whenever $s,s'\in S^\aff$ are conjugate in $W$
\cite[(67)]{Vigneras.2016}. As every hyperplane in $\hyperplanes$ is of the form
$w H_s$ for some $w\in W$, $s\in S^\aff$, we obtain a well-defined function
\begin{equation}\label{eq:q-hyperplanes}
\hyperplanes\longrightarrow q^{\Z_{>0}},\quad q(wH_s) \coloneqq q_s.
\end{equation}
For every $w\in W$ (or $W(1)$) we then have
\cite[Definition~4.14]{Vigneras.2016}
\begin{equation}\label{eq:qw-hyperplanes}
q_w = \prod_{H\in \hyperplanes_w}q(H),
\end{equation}
where $\hyperplanes_w$ denotes the set of hyperplanes in $\hyperplanes$
separating $\chamber$ and $w\chamber$.

For $v,w\in W$ (or $W(1)$) there exists a unique integer $q_{v,w}\in
q^{\Z_{\ge0}}$ satisfying \cite[Definition~4.14]{Vigneras.2016}
\[
q_vq_w = q_{vw}q_{v,w}^2.
\]
Notice that
\begin{equation}\label{eq:qvw-hyperplanes}
q_{v,w} = \prod_{H\in \hyperplanes_v\cap v\hyperplanes_w} q(H).
\end{equation}
(In \cite[Lemma~4.19]{Vigneras.2016} this is only stated for $v,w\in W^\aff$.
But this follows in general from the facts that $\hyperplanes_{wu} =
\hyperplanes_w$ and $u\hyperplanes_w = \hyperplanes_{uwu^{-1}}$ whenever $w\in
W^\aff$ and $u\in \Omega$.)

\begin{rmk} 
\begin{enumerate}[label=(\alph*)]
\item If $\alg G$ is $\field$-split, then $q_w = q^{\ell(w)}$ for all $w\in W$.
For this reason the function $w\mapsto q_w$ may be viewed as a generalized
length function on $W$.
\item In general, $q_{v,w} = 1$ if and only if $\ell(vw) = \ell(v) + \ell(w)$.
\end{enumerate}
\end{rmk} 

\subsection{Levi subgroups} 
Let $\alg P = \alg M \alg U_{\alg P}$ be a (standard) parabolic subgroup. Then
$\alg Z \subseteq \alg M$ and $\alg N_{\alg M}\coloneqq \alg N_{\alg M}(\alg T)
= \alg N\cap\alg M$. 
All the objects we have defined for $\alg G$ have an analogue for
$\alg M$ and we denote them by attaching the index $M$; for example, we write
$\Phi_M$, $W_{0,M}$, $\apartment_M$, $\hyperplanes_M$, $q_{M,w}$, etc.

Notice that $W_{0,M}$ is contained in $W_0$, $W_M$ identifies with
$\Lambda\rtimes W_{0,M}$, and $W_M(1)$ is the preimage of $W_M$ under $W(1)\to
W$. The restriction $\varphi_{0,M}$ of $\varphi_0$ to 
$(Z, (U_{\alpha})_{\alpha\in\Phi_M})$ is again discrete, special, and
compatible with $\val_{\field}$. The $\R$-vector space
\[
V_M \coloneqq \R\otimes X_*(T)/X_*(C_M),
\]
where $\alg C_{\alg M} = \bigl(\bigcap_{\alpha\in
\Phi^+_M}\Ker\alpha\bigr)^\circ$ is the connected center of $\alg M$, is a
quotient of $V$. Then $\Phi_M = V_M^*\cap \Phi$ and $\Sigma_M = V_M^*\cap
\Sigma$. The projection $V\twoheadrightarrow V_M$ induces a natural
$N_M$-equivariant projection
\[
p_M\colon \apartment \longtwoheadrightarrow \apartment_M
\]
sending $\varphi_0 \mapsto \varphi_{0,M}$. Taking inverse images we obtain
$N_M$-equivariant inclusions $\Phi_M^\aff \subseteq \Phi^\aff$ and
$\hyperplanes_M \subseteq \hyperplanes$. Therefore, we have also
$S(\hyperplanes_M)\subseteq S(\hyperplanes)$ and hence $W_M^\aff \subseteq
W^\aff$. 
\begin{rmk} 
In general we have only $p_M(\chamber) \subseteq \chamber_M$ and not an
equality. In this case we have $S_M^\aff \nsubseteq S^\aff$. Therefore, the
length and Bruhat order on $W_M^\aff$ are not obtained by restricting the length
and Bruhat order of $W^\aff$.
\end{rmk} 

\section{Groups, double cosets, and indices}\label{sec:groups} 
We fix a (standard) parabolic $\field$-subgroup $\alg P = \alg M\alg U_{\alg P}$
of $\alg G$.
\subsection{A double coset formula}\label{subsec:double} 
Let $\Gamma \subseteq P$ be a compact open subgroup satisfying $\Gamma =
\Gamma_M\Gamma_{U_P}$, where $\Gamma_M \coloneqq \Gamma\cap M$ and
$\Gamma_{U_P} \coloneqq \Gamma \cap U_P$. This means that every $g\in \Gamma$
can be uniquely written as
\[
g = g_{M}\cdot g_{U_P},\qquad \text{for some $g_{M}\in \Gamma_{M}$, $g_{U_P}\in
\Gamma_{U_P}$.}
\]
Notice that for all $g\in P$ and $m\in M$ the indices
\[
\mu(g)\coloneqq [\Gamma : \Gamma_{(g)}],\qquad \mu_{U_P}(g)\coloneqq
[\Gamma_{U_P} : (\Gamma_{U_P})_{(g)}],\qquad \mu_M(m)\coloneqq [\Gamma_M :
(\Gamma_M)_{(m)}]
\]
are finite, because $\Gamma\subseteq P$ is compact open. We also remark that the
projection map $\pr_M\colon P\twoheadrightarrow M$ is continuous and open, so
that $\nu_M(g)\coloneqq [(\Gamma_M)_{(g_M)} : \pr_M(\Gamma_{(g)})]$ is finite.

The following proposition generalizes \cite[Lemma~2]{Gritsenko.1988}.

\begin{prop}\label{prop:double-coset} 
Given $g\in P$, consider the coset decompositions
\[
\Gamma_M = \bigsqcup_{i=1}^{\mu_M(g_M)} (\Gamma_M)_{(g_M)} m_i,\quad
(\Gamma_M)_{(g_M)} = \bigsqcup_{j=1}^{\nu_M(g)} \pr_M(\Gamma_{(g)}) h_j,\quad
\Gamma_{U_P} = \bigsqcup_{s=1}^{\mu_{U_P}(g)} (\Gamma_{U_P})_{(g)}u_s.
\]
Then one has a decomposition of the double coset
\begin{equation}\label{eq:double-coset}
\Gamma g\Gamma = \bigcup_{i=1}^{\mu_M(g_M)} \bigcup_{j=1}^{\nu_M(g)}
\bigcup_{s=1}^{\mu_{U_P}(g)} \Gamma gu_sh_jm_i.
\end{equation}
Moreover, the union is disjoint.
In particular, $\mu(g) = \mu_M(g_M)\cdot \nu_M(g)\cdot \mu_{U_P}(g)$.
\end{prop}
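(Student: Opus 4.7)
The plan is to use the standard bijection
\[
\Gamma_{(g)}\backslash \Gamma \xrightarrow{\sim} \Gamma g\backslash \Gamma g\Gamma,\qquad \Gamma_{(g)}\gamma \longmapsto \Gamma g\gamma,
\]
which is well-defined and bijective because $\Gamma g\gamma_1 = \Gamma g\gamma_2$ is equivalent to $\gamma_1\gamma_2^{-1}\in g^{-1}\Gamma g\cap \Gamma = \Gamma_{(g)}$. The task then reduces to decomposing $\Gamma$ into right $\Gamma_{(g)}$-cosets in three nested stages so that the representatives come out as products of the form $u_sh_jm_i$; the disjointness of the union in~\eqref{eq:double-coset} and the multiplicativity of $\mu(g)$ follow at once from the index $[\Gamma:\Gamma_{(g)}]$ being the product of the three successive indices.

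The starting observation is that $\Gamma_{U_P} = \Gamma\cap U_P$ is normal in $\Gamma$, since $\Gamma\subseteq P$ and $P$ normalizes $U_P$. Combined with the hypothesis $\Gamma = \Gamma_M\Gamma_{U_P}$ and the trivial intersection $\Gamma_M\cap \Gamma_{U_P} = \{1\}$, this makes the restriction of $\pr_M$ to $\Gamma$ a surjective homomorphism $\pi\colon \Gamma \twoheadrightarrow \Gamma_M$ with kernel $\Gamma_{U_P}$. The filtration to be used is
\[
\Gamma_{(g)} \subseteq \Gamma_{(g)}\Gamma_{U_P} \subseteq \pi^{-1}\bigl((\Gamma_M)_{(g_M)}\bigr) \subseteq \Gamma.
\]
For the middle inclusion, one first checks that $\pr_M(\Gamma_{(g)})\subseteq (\Gamma_M)_{(g_M)}$ by applying $\pi$ to the defining relation $g\gamma g^{-1}\in \Gamma$ for $\gamma\in \Gamma_{(g)}$, and then verifies $\Gamma_{(g)}\Gamma_{U_P} = \pi^{-1}(\pr_M(\Gamma_{(g)}))$ using that $\ker \pi = \Gamma_{U_P}$. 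The outer two indices then evaluate to $\mu_M(g_M)$ and $\nu_M(g)$, with right coset representatives $m_i$ and $h_j$, respectively.

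The hard part will be the bottom index $[\Gamma_{(g)}\Gamma_{U_P} : \Gamma_{(g)}]$. Since $\Gamma_{U_P}$ is normal in the ambient group, the second isomorphism theorem rewrites this as $[\Gamma_{U_P} : \Gamma_{(g)}\cap \Gamma_{U_P}]$, and the crucial identity to establish is
\[
\Gamma_{(g)}\cap \Gamma_{U_P} = (\Gamma_{U_P})_{(g)}.
\]
The non-trivial inclusion requires the observation that if $u\in U_P$ and $gug^{-1}\in \Gamma$, then in fact $gug^{-1}\in \Gamma_{U_P}$, because $g\in P$ normalizes $U_P$; the reverse inclusion is immediate. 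Thus this index equals $\mu_{U_P}(g)$, with representatives $u_s$. Multiplying the three partial decompositions and transporting the result through the bijection above yields the decomposition $\Gamma g\Gamma = \bigsqcup_{i,j,s}\Gamma gu_sh_jm_i$ together with the identity $\mu(g) = \mu_M(g_M)\nu_M(g)\mu_{U_P}(g)$.
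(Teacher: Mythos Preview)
Your proof is correct. The approach differs from the paper's, which argues directly on the double coset: it first shows by explicit manipulation that every $g\gamma$ with $\gamma\in\Gamma$ lands in some $\Gamma g u_sh_jm_i$, and then checks disjointness by hand, projecting the relation $gu_sh_jm_i = \gamma g u_th_am_b$ down to $M$ to force $i=b$, then $j=a$, then $s=t$. Your argument instead transports everything through the bijection $\Gamma_{(g)}\backslash\Gamma \cong \Gamma\backslash\Gamma g\Gamma$ at the outset and replaces the explicit coset chase by the subgroup tower
\[
\Gamma_{(g)}\subseteq \Gamma_{(g)}\Gamma_{U_P}\subseteq \pi^{-1}\bigl((\Gamma_M)_{(g_M)}\bigr)\subseteq \Gamma,
\]
reading off the successive indices as $\mu_{U_P}(g)$, $\nu_M(g)$, $\mu_M(g_M)$ via the second isomorphism theorem and the identity $\Gamma_{(g)}\cap\Gamma_{U_P}=(\Gamma_{U_P})_{(g)}$. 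Both proofs rely on the same two facts (that $P$ normalizes $U_P$, and that $\pr_M(\Gamma_{(g)})\subseteq(\Gamma_M)_{(g_M)}$), but your filtration packages them more cleanly and makes the disjointness and the index formula automatic rather than separate verifications; the paper's hands-on argument has the mild advantage of being entirely self-contained and not invoking any isomorphism theorems.
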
 
\begin{proof} 
It is clear that the right hand side of \eqref{eq:double-coset} is contained in
$\Gamma g\Gamma$. For the converse inclusion let $\gamma = \gamma_M\gamma_{U_P}
\in \Gamma$. Write 
\begin{align*}
\gamma_M &= mm_i, & & \text{for some $m\in (\Gamma_M)_{(g_M)}$ and
$1\le i\le \mu_M(g_M)$,}\\
m &= hh_j, & & \text{for some $h\in \pr_M(\Gamma_{(g)})$ and $1\le j\le
\nu_M(g)$.}
\end{align*}
By definition of $h$ there exists $u\in \Gamma_{U_P}$ with $hu^{-1} \in
\Gamma_{(g)}$, \ie with $ghu^{-1} = yg$ for some $y\in \Gamma$. Thus,
\[
\gamma = \gamma_M \gamma_{U_P} = hh_jm_i\gamma_{U_P} = hu^{-1}\cdot
u\gamma_{U_P}^{(h_jm_i)^{-1}}\cdot h_jm_i.
\]
As $\Gamma_M$ normalizes $\Gamma_{U_P}$ we may write
$vu_s = u\gamma_{U_P}^{(h_jm_i)^{-1}} \in \Gamma_{U_P}$ for some $v\in
(\Gamma_{U_P})_{(g)}$ and some integer $1\le s\le \mu_{U_P}(g)$. Then also
$v^{g^{-1}} \in \Gamma_{U_P}$, and therefore
\begin{align*}
g\gamma &= ghu^{-1}\cdot u\gamma_{U_P}^{(h_jm_i)^{-1}}\cdot h_jm_i\\
&= yg\cdot vu_s\cdot h_jm_i\\
&= yv^{g^{-1}}\cdot g\cdot u_sh_jm_i \in \Gamma g u_sh_jm_i.
\end{align*}
This proves equality in \eqref{eq:double-coset}. To see disjointness in
\eqref{eq:double-coset}, assume $gu_sh_jm_i = \gamma gu_th_am_b$ for some
$\gamma\in \Gamma$. Rearranging gives
\begin{equation}\label{eq:double-coset-2}
\gamma^g = u_sh_jm_im_b^{-1}h_a^{-1}u_t^{-1} \in \Gamma_{(g)}.
\end{equation}
Applying $\pr_M$ to \eqref{eq:double-coset-2} yields
\begin{equation}\label{eq:double-coset-3}
\gamma_M^{g_M} = h_jm_im_b^{-1} h_a^{-1} \in \pr_M\bigl(\Gamma_{(g)}\bigr).
\end{equation}
In particular, $m_im_b^{-1} = h_j^{-1} \gamma_M^{g_M} h_a \in
(\Gamma_M)_{(g_M)}$, whence $i=b$. Therefore, equation \eqref{eq:double-coset-3}
reads $\gamma_M^{g_M} = h_jh_a^{-1} \in \pr_M\bigl(\Gamma_{(g)}\bigr)$, and we
deduce $j=a$. Going back to \eqref{eq:double-coset-2} gives $\gamma^g = u_s
u_t^{-1} \in \Gamma_{(g)} \cap U_P$. But notice that $\Gamma_{(g)}\cap U_P =
(\Gamma_{U_P})_{(g)}$, because $g$ normalizes $U_P$. Consequently,
$u_su_t^{-1}\in (\Gamma_{U_P})_{(g)}$, whence $s=t$. This concludes the proof of
the disjointness assertion. 

The map $\Gamma_{(g)}\backslash \Gamma \to \Gamma\backslash \Gamma g\Gamma$,
sending $\Gamma_{(g)}\gamma \mapsto \Gamma g\gamma$, is well-defined and
bijective. Hence, we have $\mu(g) = [\Gamma : \Gamma_{(g)}] = \lvert
\Gamma\backslash \Gamma g\Gamma\rvert$ and the last assertion follows.
\end{proof} 

\begin{rmk} 
In general, we have $\nu_M(g)\neq 1$, \ie $\pr_M(\Gamma_{(g)}) \subsetneqq
(\Gamma_M)_{(g_M)}$. As a concrete example consider the group $P$ of upper
triangular matrices inside $\GL_2(\Q_p)$. It contains the subgroup $M$ of
diagonal matrices and the subgroup $U_P$ of upper triangular unipotent matrices.
Let $\Gamma = \begin{psmallmatrix} (1+p\Z_p)^\times & \Z_p\\ 0 &
(1+p\Z_p)^\times\end{psmallmatrix}$ and $g\coloneqq \begin{psmallmatrix}1 &
p^{-n-1}\\0&1\end{psmallmatrix}$ for some integer $n\in\Z_{\ge0}$. Then $g_M =
1$, whence $(\Gamma_M)_{(g_M)} = \Gamma_M$. Given any $\gamma =
\begin{psmallmatrix} 1+pa & b\\0&1+pc\end{psmallmatrix}$ in $\Gamma$, we compute
$g\gamma g^{-1} = \begin{psmallmatrix} 1+pa & b+p^{-n}(c-a)\\0 &
1+pc\end{psmallmatrix}$.
Therefore, $g\gamma g^{-1}\in \Gamma$ if and only if $c-a\in p^n\Z_p$. Thus,
$\Gamma_{(g)} = \set{\begin{psmallmatrix}1+pa & b\\0 &
1+pa+p^{n+1}c\end{psmallmatrix}} {a,b,c\in \Z_p}$,
so that 
\[
\pr_M\bigl(\Gamma_{(g)}\bigr) = \set{\begin{pmatrix} 1+pa & 0\\0 &
1+pa+p^{n+1}c\end{pmatrix}}{a,c\in\Z_p}.
\]
From this description it is already clear that $\nu_M(g)\neq 1$ in general. As
an exercise, and in order to illustrate the methods employed in
section~\ref{subsec:inequality}, we explicitly compute $\nu_M(g)$.
Consider the reduction modulo $p^{n+1}$ map
\[
\psi\colon \Gamma_M = \begin{pmatrix} (1+p\Z_p)^\times & 0\\0 &
(1+p\Z_p)^\times\end{pmatrix} \longrightarrow \begin{pmatrix}
(\Z_p/p^{n+1}\Z_p)^\times & 0\\0 & (\Z_p/p^{n+1}\Z_p)^\times\end{pmatrix}.
\]
Its kernel is contained in $\pr_M\bigl(\Gamma_{(g)}\bigr)$, and we have
\begin{align*}
\psi\bigl(\Gamma_M\bigr) &= \set{\begin{pmatrix}1+pa+p^{n+1}\Z_p & 0\\0 &
1+pc+p^{n+1}\Z_p\end{pmatrix}}{a,c\in\Z_p}\quad \text{and}\\
\psi\bigl(\pr_M(\Gamma_{(g)})\bigr) &= \set{\begin{pmatrix} 1+pa +
p^{n+1}\Z_p & 0\\0 & 1+pa+p^{n+1}\Z_p\end{pmatrix}}{a\in\Z_p},
\end{align*}
whence $\lvert \psi(\Gamma_M)\rvert = p^{2n}$ and $\lvert
\psi\bigl(\pr_M(\Gamma_{(g)})\bigr)\rvert = p^n$. Therefore,
\[
\nu_M(g) = [\Gamma_M : \pr_M(\Gamma_{(g)})] =  \lvert \psi(\Gamma_M)\rvert
/\lvert \psi\bigl(\pr_M(\Gamma_{(g)})\bigr)\rvert = p^n.
\]
\end{rmk} 


\subsection{Two technical lemmas}\label{subsec:technical} 
In this subsection we prove two technical lemmas which will be needed for the
proof of the fundamental Proposition~\ref{prop:inequality}.

Recall the finite-dimensional $\R$-vector space $V$, the root system $\Phi$
inside the dual vector space $V^*$, and the set of positive roots $\Phi^+$.
Fix a subset $\Psi\subseteq \Phi^+$. We consider the partial ordering on $\Psi$
defined by
\[
\alpha\le \beta
\]
if there exist $\gamma_1,\dotsc,\gamma_n\in\Psi$ and $r,s_1,\dotsc,s_n\in
\Z_{\ge0}$, with $r>0$, such that $\beta = r\alpha + \sum_{i=1}^n s_i\gamma_i$.
The relation is clearly reflexive and transitive. It is also antisymmetric,
since there exists $v\in V$ with $\langle \alpha, v\rangle > 0$ for all
$\alpha\in \Phi^+$. 
We write $\alpha < \beta$ if $\alpha\le \beta$ and $\alpha\neq \beta$.

Let $X$ be a group. For each $\alpha\in \Psi$ let $Y_\alpha$ be a subgroup of
$X$. Define 
\[
X_\alpha \coloneqq \begin{cases}
Y_\alpha, & \text{if $2\alpha\notin \Psi$,}\\
Y_\alpha Y_{2\alpha}, & \text{if $2\alpha\in \Psi$.}
\end{cases}
\]
Put $X_{2\alpha} \coloneqq \{1\}$ if $2\alpha\notin \Psi$. We impose
the following conditions:
\begin{enumerate}[label=(\roman*)]
\item\label{rootgroup-i} $X$ is generated by the $Y_\alpha$, for $\alpha\in
\Psi$.

\item\label{rootgroup-ii} For all $\alpha,\beta\in \Psi$ the commutator subgroup
$[Y_\alpha, Y_\beta]$ is contained in the subgroup of $X$ generated by the
$Y_{r\alpha + s\beta}$, where $r,s\in \Z_{>0}$ with $r\alpha + s\beta\in \Psi$.

\item\label{rootgroup-iii} The intersection of the groups generated by
$\bigcup_{\substack{\alpha\in \Psi,\\ \langle \alpha,v\rangle\le 0}} Y_\alpha$
and $\bigcup_{\substack{\alpha\in \Psi,\\ \langle \alpha,v\rangle
>0}}Y_\alpha$, respectively, is trivial for each $v\in V$.
\end{enumerate}

Notice that $X_\alpha$ is a group thanks to \ref{rootgroup-ii}.
Fix a bijection $o\colon \Psi_{\red} \to \{1,2,\dotsc, \lvert
\Psi_{\red}\rvert\}$ and define
\[
\prod_{\alpha\in \Psi_{\red}} x_\alpha \coloneqq x_{o^{-1}(1)}\cdot
x_{o^{-1}(2)}\dotsm x_{o^{-1}(\lvert \Psi_{\red}\rvert)},\qquad \text{for
$x_\alpha\in X_\alpha$.}
\]
We call $o$ an \emph{ordering of the factors}. It follows from \cite[Lemme
(6.1.7)]{Bruhat-Tits.1972} that the multiplication map
\[
\prod_{\alpha\in \Psi_{\red}} X_\alpha \longrightarrow X
\]
is bijective. 

\begin{lem}\label{lem:aut-RG} 
Let $f\colon X\to X$ be a group homomorphism such that
\[
f(x_\alpha)x_\alpha^{-1} \in \langle X_\beta\mid \beta>\alpha\rangle,\qquad
\text{for all $x_\alpha\in X_\alpha$, all $\alpha\in \Psi_{\red}$.}
\]
For all $\prod_{\alpha\in\Psi_{\red}} x_\alpha \in X$, with $x_\alpha \in
X_\alpha$, one has
\begin{equation}\label{eq:aut-RG}
f\Bigl( \prod_{\alpha\in \Psi_{\red}} x_\alpha\Bigr) = \prod_{\alpha\in
\Psi_{\red}} z_\alpha \wt z_{\alpha}(x_\alpha) x_\alpha,
\end{equation}
where $z_\alpha = z_\alpha((x_\beta)_{\beta<\alpha}) \in X_\alpha$ depends only
on $(x_\beta)_{\beta<\alpha}$, and where
$\wt z_\alpha\colon X_\alpha\to X_{2\alpha}$ is a group homomorphism factoring
through $X_\alpha/X_{2\alpha}$.
Moreover, 
\begin{itemize}
\item $z_\alpha$ and $\wt z_\alpha$ are uniquely determined by
\eqref{eq:aut-RG}.
\item $\wt z_\alpha$ depends only on those $x_\beta$ with $\beta < \alpha$
\emph{and} $o(\beta) > o(\alpha)$.
\end{itemize}
In particular, if $o$ is such that $\beta_1 <
\beta_2$ implies $o(\beta_1) < o(\beta_2)$, then $\wt z_\alpha$
does not depend on $(x_\beta)_{\beta < \alpha}$. In this case, 
$\wt z_\alpha(x_\alpha)$ is the image of $f(x_\alpha)x_\alpha^{-1}$ under the
projection  $\prod_{\beta\in\Psi_{\red}} X_\beta \to X_{\alpha}$.
\end{lem}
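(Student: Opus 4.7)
The plan is to apply $f$ to the canonical factorization $\prod_{\alpha\in\Psi_{\red}} x_\alpha$ and rewrite the result back into canonical form. Writing $f(x_\alpha) = c(x_\alpha)\,x_\alpha$ with $c(x_\alpha) := f(x_\alpha)\,x_\alpha^{-1} \in X^{>\alpha} := \langle X_\beta \mid \beta>\alpha\rangle$, one obtains
\[
f\Bigl(\prod_{\alpha\in\Psi_{\red}} x_\alpha\Bigr) = \prod_{\alpha\in\Psi_{\red}} c(x_\alpha)\, x_\alpha,
\]
and I would rearrange this into $\prod_\alpha y_\alpha$ with $y_\alpha \in X_\alpha$, in the order $o$, using the bijectivity of the product decomposition $\prod_\alpha X_\alpha \xrightarrow{\sim} X$ (Bruhat--Tits, Lemme~6.1.7) together with the commutator axiom (ii).

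Uniqueness is immediate: by bijectivity of the product decomposition, each $y_\alpha = z_\alpha\,\wt z_\alpha(x_\alpha)\,x_\alpha \in X_\alpha$ at position $o(\alpha)$ is determined. Since $\wt z_\alpha$ is a group homomorphism, $\wt z_\alpha(1) = 1$, so setting $x_\alpha = 1$ (other variables fixed) identifies $z_\alpha = y_\alpha|_{x_\alpha = 1}$; then $\wt z_\alpha(x_\alpha) = z_\alpha^{-1}\,y_\alpha\,x_\alpha^{-1}$.

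For existence and the dependency claims, I would track which inputs $x_\beta$ feed into each $y_\alpha$ during the rearrangement. Because every $c(x_\beta)$ lies in $X^{>\beta}$, its contribution to the $X_\alpha$-factor requires $\beta < \alpha$; and every commutator produced via (ii) lies in $\langle X_{r\beta+s\gamma} \mid r,s>0\rangle$, so it lands strictly above both $\beta$ and $\gamma$ in $<$, and can feed into the $X_\alpha$-component only when both commuting roots are $<\alpha$. Hence every contribution to $y_\alpha$ other than the direct $x_\alpha$ involves only $x_\beta$ with $\beta < \alpha$, giving $y_\alpha = z_\alpha \cdot (\text{$x_\alpha$-part})$ with $z_\alpha \in X_\alpha$ depending only on $(x_\beta)_{\beta<\alpha}$. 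The $x_\alpha$-part is $x_\alpha$ itself plus a correction in $X_{2\alpha}$ arising from commutators $[X_\alpha, X_\alpha] \subseteq X_{2\alpha}$ (by (ii), these are the only commutators between two $X_\alpha$-type factors landing back in $X_\alpha$). Since $[X_{2\alpha}, X_\gamma]$ lies strictly above $2\alpha$, this correction depends on $x_\alpha$ only modulo $X_{2\alpha}$, forcing $\wt z_\alpha \colon X_\alpha \to X_{2\alpha}$ to factor through $X_\alpha/X_{2\alpha}$, with additivity following since the target is abelian. The remaining $x_\beta$-dependence of $\wt z_\alpha$ comes precisely from those $c^{(\alpha)}(x_\beta) \in X_\alpha$ that must be pushed past $x_\alpha$ during the rearrangement, i.e., those with $\beta < \alpha$ and $o(\beta) > o(\alpha)$; in a compatible ordering no such $\beta$ exists, and $\wt z_\alpha(x_\alpha)$ is then simply the $X_\alpha$-projection of $f(x_\alpha)x_\alpha^{-1}$ under the unique decomposition of $X$.

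The main obstacle is the combinatorial bookkeeping: verifying that the rearrangement indeed produces the claimed splitting, and in particular that $\wt z_\alpha$ emerges as a well-defined homomorphism factoring through $X_\alpha/X_{2\alpha}$ rather than merely as an arbitrary $X_{2\alpha}$-valued function of $x_\alpha$ and the other variables. The key structural input throughout is axiom (ii), which forces every correction term into strictly higher root groups and so prevents dependencies from propagating backward in the partial order $<$.
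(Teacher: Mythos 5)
Your proposal identifies the right structural principle---axiom (ii) forces all correction terms into strictly higher root groups, so the dependency chain runs only forwards in $<$---and the tracking strategy you describe is sound in spirit. But what you have is a plan, not a proof, and you acknowledge as much: the "combinatorial bookkeeping" that you flag as the main obstacle is precisely the content of the lemma. Two concrete gaps remain. First, you never actually carry out the rearrangement, so the claim that the $X_\alpha$-component splits as $z_\alpha \cdot \wt z_\alpha(x_\alpha)\cdot x_\alpha$ with the stated dependency structure is asserted rather than derived. Second, and more seriously, the homomorphism property of $\wt z_\alpha$ is dismissed with "additivity following since the target is abelian," which is a non sequitur: an arbitrary map into an abelian group is not automatically a homomorphism. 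Something must be shown. The paper does real work here, in two places: the base case establishes that $x\mapsto f(x)x^{-1}$ is a homomorphism by exploiting that $X_{2\alpha}$ is \emph{central}, and the inductive step establishes that $x\mapsto [f(x), y^{>}_{\alpha_0}]\cdot f(x)x^{-1}$ is a homomorphism by combining the base case with the commutator identity $[uv,w]=u[v,w]u^{-1}[u,w]$. Neither computation appears in your sketch, and neither is a formality.

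The paper's actual proof organizes the bookkeeping by induction on $|\Psi_{\red}|$: it picks a $<$-maximal $\alpha_0$, proves an auxiliary Claim that $y_{\alpha_0}$ depends only on $(x_\beta)_{\beta\le\alpha_0}$ by passing to the quotient $X/Z_{\alpha_0}$ (where $Z_{\alpha_0}$ is generated by the $X_\beta$, $\beta\not\le\alpha_0$), handles all $\alpha\ne\alpha_0$ by applying the inductive hypothesis to $X/X_{\alpha_0}$, and then does the explicit commutator computation for $y_{\alpha_0}$ itself. Your approach of tracking dependencies in a single global rearrangement could in principle be made rigorous, but that would require an explicit normal-form argument of comparable length; as written, it leaves exactly the two points above unproven.
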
 
\begin{rmk} 
\begin{enumerate}[label=(\alph*)]
\item The homomorphism $f\colon X\to X$ in Lemma~\ref{lem:aut-RG} is necessarily
an automorphism.

\item The main example to keep in mind is the case where $f\colon X\to X$ is
conjugation by some element of $X$. (See Lemma~\ref{lem:inequality-1} for a
proof of why such $f$ satisfies the hypothesis of Lemma~\ref{lem:aut-RG}.) This
is also the only morphism to which we apply Lemma~\ref{lem:aut-RG}.
\end{enumerate}
\end{rmk} 
\begin{proof}[Proof of Lemma~\ref{lem:aut-RG}] 
The uniqueness assertions are immediate (for example, the uniqueness of
$z_\alpha$ follows by letting $x_\alpha = 1$).
Notice that \ref{rootgroup-ii} above implies that $X_{2\alpha}$ is central in
$X_\alpha$ and that the commutator subgroup $[X_\alpha, X_\alpha]$ is contained
in $X_{2\alpha}$.

We prove \eqref{eq:aut-RG} by induction on $\lvert \Psi_{\red}\rvert$. Suppose
$\Psi_{\red} = \{\alpha\}$. By the hypothesis on $f$ the map
\[
\wt z_\alpha\colon X_\alpha\longrightarrow X_{2\alpha},\qquad x\longmapsto
f(x)x^{-1}
\]
is well-defined and satisfies $\wt z_{\alpha}(X_{2\alpha}) = \{1\}$. Given
$x,y\in X_\alpha$, we compute
\begin{align*}
\wt z_{\alpha}(xy) &= f(xy)\cdot (xy)^{-1} = f(x)f(y) y^{-1}x^{-1}\\
&= f(x) \wt z_\alpha(y)x^{-1} = f(x)x^{-1}\cdot \wt z_\alpha(y) = \wt
z_\alpha(x)\cdot \wt z_\alpha(y).
\end{align*}
Hence, $\wt z_\alpha$ is a group homomorphism, which proves the base case (with
$z_\alpha \coloneqq 1$).

Suppose now $\lvert \Psi_{\red}\rvert > 1$ and choose a root
$\alpha_0\in \Psi_{\red}$ maximal with respect to the partial order. We start by
proving the following useful claim:

\begin{claim*} 
Suppose $f\bigl(\prod_{\alpha\in \Psi_{\red}} x_\alpha\bigr) = \prod_{\alpha \in
\Psi_{\red}} y_\alpha$, where $x_\alpha,y_\alpha\in X_\alpha$. Then
$y_{\alpha_0}$ depends only on the $x_\beta$ with $\beta\le \alpha_0$.
\begin{proof}[Proof of the claim] 
Let $\Psi'$ be the largest subset of $\Psi$ with $\Psi'_{\red}= \set{\beta\in
\Psi_{\red}}{\beta\not\le
\alpha_0}$, and let $Z_{\alpha_0}$ be the subgroup of $X$ generated by the
groups $X_\beta$, for $\beta\in \Psi'_{\red}$. Notice that $\Psi'$ is upwards
closed in $\Psi$, that is, $\gamma\ge \beta$ with $\gamma\in \Psi$ and
$\beta\in \Psi'$ implies $\gamma\in \Psi'$. Therefore, $Z_{\alpha_0}$ is a
normal subgroup of $X$. The hypotheses \ref{rootgroup-i}--\ref{rootgroup-iii}
remain satisfied if we replace $X$, $\Psi$, and $(Y_\alpha)_{\alpha\in \Psi}$ by
$Z_{\alpha_0}$, $\Psi'$, and $(Y_\alpha)_{\alpha\in \Psi'}$, respectively.
Hence, the multiplication map $\prod_{\alpha\in \Psi'_{\red}} X_\alpha \to
Z_{\alpha_0}$ is bijective, and the canonical projection
\[
\pr_{\le \alpha_0}\colon X \cong \prod_{\beta\in\Psi_{\red}} X_\beta
\longrightarrow \prod_{\beta\le \alpha_0} X_\beta \cong X/Z_{\alpha_0},
\]
is a group homomorphism with kernel $Z_{\alpha_0}$. The hypothesis on $f$
implies
$f(Z_{\alpha_0}) \subseteq Z_{\alpha_0}$, again since $\Psi'$ is upwards closed.
We obtain an induced group homomorphism $\ol f\colon X/Z_{\alpha_0} \to
X/Z_{\alpha_0}$ such that, after identifying $X\cong \prod_{\beta\in
\Psi_{\red}} X_{\beta}$ and $X/Z_{\alpha_0} \cong \prod_{\beta\le \alpha_0}
X_\beta$, the diagram
\[
\begin{tikzcd}
\prod_{\beta\in \Psi_{\red}} X_\beta \ar[r,"f"] \ar[d,"\pr_{\le \alpha_0}"'] &
\prod_{\beta\in \Psi_{\red}} X_\beta \ar[d,"\pr_{\le\alpha_0}"] \\
\prod_{\beta\le \alpha_0} X_\beta \ar[r,"\ol f"'] & \prod_{\beta\le \alpha_0}
X_\beta
\end{tikzcd}
\]
is commutative. As the restriction of $\pr_{\le\alpha_0}$ to $X_{\alpha_0}$ is
injective, it follows immediately that $y_{\alpha_0}$ only depends on the
$x_\beta$ with $\beta\le \alpha_0$. The claim is proved.
\end{proof} 
\end{claim*}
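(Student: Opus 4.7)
My plan is to prove the Claim by passing to a quotient of $X$ in which the components $y_\beta$ with $\beta \le \alpha_0$ become computable from only the $(x_\beta)_{\beta \le \alpha_0}$. Let $Z_{\alpha_0} \subseteq X$ denote the subgroup generated by $\{X_\gamma : \gamma \in \Psi_{\red},\ \gamma \not\le \alpha_0\}$; intuitively, this collects the ``high'' factors that should be invisible when one projects onto the $\alpha_0$-slot.

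The first step is to show that $Z_{\alpha_0}$ is normal in $X$. Since the set $\{\gamma \in \Psi : \gamma \not\le \alpha_0\}$ is upwards closed (by transitivity of the partial order), condition~\ref{rootgroup-ii} ensures that for any $\delta \in \Psi$ and any $\gamma \not\le \alpha_0$, the commutator $[Y_\delta, Y_\gamma]$ lies in the subgroup generated by the $Y_{r\delta + s\gamma}$ with $r, s > 0$; each such weight satisfies $\gamma \le r\delta + s\gamma$ and hence $r\delta + s\gamma \not\le \alpha_0$. Thus commutators stay in $Z_{\alpha_0}$, and normality follows from condition~\ref{rootgroup-i}.

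Next, the hypothesis on $f$ gives $f(Z_{\alpha_0}) \subseteq Z_{\alpha_0}$: for $x_\gamma \in X_\gamma$ with $\gamma \not\le \alpha_0$, we have $f(x_\gamma) x_\gamma^{-1} \in \langle X_\beta : \beta > \gamma\rangle$, and each such $\beta$ still satisfies $\beta \not\le \alpha_0$ by transitivity. Hence $f$ descends to a homomorphism $\bar f \colon X/Z_{\alpha_0} \to X/Z_{\alpha_0}$. Applying~\cite[Lemme~(6.1.7)]{Bruhat-Tits.1972} to the subset $\Psi' \coloneqq \{\beta \in \Psi : \beta \le \alpha_0\}$ (whose axioms~\ref{rootgroup-i}--\ref{rootgroup-iii} are inherited by the subgroup generated by $(Y_\beta)_{\beta \in \Psi'}$), the multiplication map $\prod_{\beta \le \alpha_0} X_\beta \to X/Z_{\alpha_0}$ is a bijection, and the image of $\prod_{\alpha \in \Psi_{\red}} x_\alpha$ in the quotient equals $\prod_{\beta \le \alpha_0} x_\beta$. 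Comparing components in the identity $\bar f\bigl(\prod_{\beta \le \alpha_0} x_\beta\bigr) = \prod_{\beta \le \alpha_0} y_\beta$ then pins down $y_{\alpha_0}$ as a function of $(x_\beta)_{\beta \le \alpha_0}$ alone.

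\textbf{Main obstacle.} The principal technical step is verifying that $\prod_{\beta \le \alpha_0} X_\beta \to X/Z_{\alpha_0}$ is a bijection, which amounts to checking that axioms~\ref{rootgroup-i}--\ref{rootgroup-iii} are inherited by the restricted data $(\Psi', (Y_\beta)_{\beta \in \Psi'})$. Axiom~\ref{rootgroup-ii} is inherited because $r\alpha + s\beta \le \alpha_0$ whenever $\alpha, \beta \le \alpha_0$ (directly from the definition of $\le$), axiom~\ref{rootgroup-iii} is inherited as it is a universal condition on all of $V$, and axiom~\ref{rootgroup-i} holds by construction when $X$ is replaced by the subgroup generated by $(Y_\beta)_{\beta \in \Psi'}$. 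The delicate part will be keeping straight which set the indexing ranges over (particularly when $2\alpha_0 \in \Psi \setminus \Psi_{\red}$) so that the identification $X/Z_{\alpha_0} \cong \prod_{\beta \le \alpha_0} X_\beta$ is unambiguous.
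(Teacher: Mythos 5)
Your proof follows the same overall strategy as the paper's: form the normal subgroup $Z_{\alpha_0}$, check $f$-stability, pass to the quotient $X/Z_{\alpha_0}$, and read off $y_{\alpha_0}$ from the low coordinates. However, there is a genuine gap in how you justify the product decomposition of $X/Z_{\alpha_0}$.

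You apply \cite[Lemme~(6.1.7)]{Bruhat-Tits.1972} to the \emph{low} subset $\Psi' = \set{\beta\in\Psi}{\beta\le\alpha_0}$, and to do so you assert that hypothesis~\ref{rootgroup-ii} is inherited because ``$r\alpha + s\beta \le \alpha_0$ whenever $\alpha,\beta\le\alpha_0$, directly from the definition of $\le$.'' This implication is \emph{not} immediate from the definition. From $\alpha_0 = a\alpha + X$ and $\alpha_0 = c\beta + Y$ (with $a,c>0$ and $X,Y$ nonnegative $\Psi$-combinations) one obtains only a relation of the form $(rc+sa)\alpha_0 = ac(r\alpha+s\beta) + \dotsb$, whose left-hand side has coefficient $rc+sa>1$ rather than $1$; this does not yield $r\alpha+s\beta \le \alpha_0$ as $\le$ is defined. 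Indeed, the set $\set{\beta}{\beta\le\alpha_0}$ is downwards closed, but there is no reason it should be closed under the operation $(\alpha,\beta)\mapsto r\alpha+s\beta$, which is what~\ref{rootgroup-ii} requires. The paper avoids precisely this difficulty by instead verifying the hypotheses for the \emph{complement}, $\Psi'_{\red} = \set{\beta\in\Psi_{\red}}{\beta\not\le\alpha_0}$: there, closure under $r\alpha+s\beta$ is genuinely immediate, because $\alpha \le r\alpha+s\beta$, so if $r\alpha+s\beta$ were $\le \alpha_0$ then transitivity would force $\alpha\le\alpha_0$, a contradiction. With that, $Z_{\alpha_0}$ itself has the product decomposition, and the identification $X/Z_{\alpha_0} \cong \prod_{\beta\le\alpha_0} X_\beta$ follows from the known decomposition of the full group $X$. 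So the correct way to carry out your plan is to run the axiom check on the upward-closed complement, not on $\Psi'$ itself.
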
 

Let $(x_\alpha)_\alpha\in \prod_{\alpha\in \Psi_{\red}}$ and write
$f\bigl(\prod_{\alpha\in\Psi_{\red}} x_\alpha\bigr) = \prod_{\alpha\in
\Psi_{\red}} y_\alpha$ as in the claim. We prove \eqref{eq:aut-RG} in two steps.
\medskip

\textit{Step~1:} We have $y_\alpha = z_\alpha \wt z_\alpha(x_\alpha)x_\alpha$,
for $\alpha\neq \alpha_0$, with $z_\alpha$ and $\wt z_\alpha$ as in the
statement of the lemma.\smallskip

This follows from the induction hypothesis as follows: as $X_{\alpha_0}$ is
normal in $X$, the quotient $X'\coloneqq X/X_{\alpha_0}$ is a group. Put $\Psi'
\coloneqq \Psi\setminus\{\alpha_0,2\alpha_0\}$. Under the projection map
$X\twoheadrightarrow X'$ the subgroups $Y_{\alpha}$, $X_{\alpha}$ of $X$ embed
into $X'$ for $\alpha\in \Psi'$. Denote $f'\colon X'\to X'$ the homomorphism
induced by $f$. The hypotheses of the lemma remain satisfied if we replace $X$,
$\Psi$, $(Y_\alpha)_{\alpha\in\Psi}$, $f$ by $X'$, $\Psi'$,
$(Y_\alpha)_{\alpha\in \Psi'}$, $f'$, respectively. The diagram
\[
\begin{tikzcd}
X \cong \prod_{\alpha\in\Psi_{\red}} X_\alpha \ar[d,"\pr"'] \ar[r,"f"] &
\prod_{\alpha\in\Psi_{\red}} X_{\alpha} \cong X \ar[d,"\pr"]\\
X' \cong \prod_{\alpha\in\Psi'_{\red}} X_\alpha \ar[r,"f'"'] &
\prod_{\alpha\in\Psi'_{\red}} X_\alpha \cong X'
\end{tikzcd}
\]
is commutative. Therefore, $f'\bigl(\prod_{\alpha\in\Psi'_{\red}} x_\alpha\bigr)
= \prod_{\alpha\in\Psi'_{\red}} y_\alpha$, and the induction hypothesis implies
$y_\alpha = z_\alpha \wt z_\alpha(x_\alpha)x_\alpha$ for certain elements
$z_\alpha\in X_\alpha$ and group homomorphisms $\wt z_\alpha\colon X_\alpha \to
X_{2\alpha}$ factoring through $X_{\alpha}/X_{2\alpha}$ and only depending on
the $x_\beta$ with $\beta < \alpha$, $\alpha\in \Psi'_{\red} = \Psi_{\red}
\setminus\{\alpha_0\}$.\medskip

\textit{Step~2:} We have $y_{\alpha_0} = z_{\alpha_0}\wt
z_{\alpha_0}(x_{\alpha_0}) x_{\alpha_0}$ with $z_{\alpha_0}$ and $\wt
z_{\alpha_0}$ as in the statement of the lemma.\smallskip

We introduce the following notation:
\begin{align*}
x^< &\coloneqq \prod_{\substack{\alpha\in\Psi_{\red}\\ o(\alpha) < o(\alpha_0)}}
x_{\alpha}, & x^> &\coloneqq \prod_{\substack{\alpha\in\Psi_{\red}\\ o(\alpha) >
o(\alpha_0)}} x_{\alpha}, & x &\coloneqq x^<\cdot x_{\alpha_0}\cdot x^>, & 
x' &\coloneqq x^<\cdot x^>,\\
f(x^<) &= \prod_{\alpha\in\Psi_{\red}} y^<_\alpha, & f(x^>) &= \prod_{\alpha\in
\Psi_{\red}} y^>_{\alpha}, & f(x) &= \prod_{\alpha\in\Psi_{\red}} y_\alpha, &
f(x') &= \prod_{\alpha\in \Psi_{\red}} y'_\alpha.
\end{align*}
The claim implies that $y^<_{\alpha_0}$ (resp. $y^>_{\alpha_0}$, resp.
$y'_{\alpha_0}$) depends only on the $x_\beta$ with $\beta <\alpha_0$ and
$o(\beta)<o(\alpha_0)$ (resp. $o(\beta) > o(\alpha_0)$, resp. $o(\beta) \neq
o(\alpha_0)$). Using that $X_{2\alpha_0}$ is central in $X$ and that
$X_{\alpha_0}$ is centralized by all $X_\beta$ with $\beta\neq \alpha_0$ we
compute
\begin{align*}
\prod_{\alpha\in\Psi_{\red}} y_\alpha &= f(x) = f(x^<)\cdot f(x_{\alpha_0})\cdot
f(x^>) = f(x^<)\cdot f(x_{\alpha_0}) \cdot \prod_{\alpha\in\Psi_{\red}}
y^>_{\alpha}\\
&= f(x^<)\cdot \Bigl(\prod_{\alpha\in\Psi_{\red}} y^>_{\alpha}\Bigr)\cdot
[f(x_{\alpha_0}), y^>_{\alpha_0}]\cdot f(x_{\alpha_0})\\
&= f(x')\cdot [f(x_{\alpha_0}), y^>_{\alpha_0}]\cdot f(x_{\alpha_0}) =
\Bigl(\prod_{\alpha\in\Psi_{\red}} y'_\alpha\Bigr)\cdot [f(x_{\alpha_0}),
y^>_{\alpha_0}]\cdot f(x_{\alpha_0}).
\end{align*}
We obtain $y_{\alpha_0} = y'_{\alpha_0}\cdot [f(x_{\alpha_0}),
y^>_{\alpha_0}]\cdot f(x_{\alpha_0})$. The claim implies that the element
$z_{\alpha_0} \coloneqq y'_{\alpha_0} \in X_{\alpha}$ only depends on the
$x_\beta$ with $\beta < \alpha_0$. Moreover, define
\begin{equation}\label{eq:aut-RG-2}
\wt z_{\alpha_0}(x_{\alpha_0}) \coloneqq [f(x_{\alpha_0}), y^>_{\alpha_0}] \cdot
f(x_{\alpha_0})x_{\alpha_0}^{-1} \in X_{2\alpha_0}.
\end{equation}
Now, $X_{2\alpha_0}$ is central in $X_{\alpha_0}$ and $f$ is the identity on
$X_{2\alpha_0}$, whence $\wt z_{\alpha_0}(X_{2\alpha_0}) = \{1\}$. It remains to
show that $\wt z_{\alpha_0}\colon X_{\alpha_0}\to X_{2\alpha_0}$ is a group
homomorphism. The base case shows that $X_{\alpha_0} \to X_{2\alpha_0}$,
$x\mapsto f(x)x^{-1}$ is a homomorphism. As $X_{2\alpha_0}$ is abelian it
suffices to show that $X_{\alpha_0} \to X_{2\alpha_0}$, $x\mapsto [f(x),
y^>_{\alpha_0}]$ is a homomorphism. But this is immediate from the general
identity $[uv,w] = u[v,w]u^{-1}\cdot [u,w]$ for all $u,v,w\in X_{\alpha_0}$.
Hence, $y_{\alpha_0} = z_{\alpha_0} \wt z_{\alpha_0}(x_{\alpha_0})
x_{\alpha_0}$, with $z_{\alpha_0}$ and $\wt z_{\alpha_0}$ depending only on the
$x_\beta$ with $\beta < \alpha_0$.\medskip

Putting together steps~1 and~2 finishes the proof of \eqref{eq:aut-RG}. For the
last statement we may assume that $\alpha = \alpha_{0}$ is maximal. Then the
claim follows from \eqref{eq:aut-RG-2}, because $y^>_{\alpha_0}$ depends only on
those $x_\beta$ with $\beta < \alpha_0$ and $o(\beta) > o(\alpha_0)$.
\end{proof} 

For the next lemma we choose the ordering $o$ of the factors such that $\alpha <
\beta$ implies $o(\alpha)<o(\beta)$.

\begin{lem}\label{lem:extra} 
Assume that $X$ is finite.
Let $Y\subseteq X$ and $Z_\alpha \subseteq X_\alpha$, for each $\alpha\in
\Psi_{\red}$, be subgroups. Assume further that the following condition is
satisfied:
\begin{equation}\label{eq:condition} 
\parbox{\dimexpr\linewidth-5em}{%
For all $1\le i\le \lvert\Psi_{\red}\rvert$ and all $(x_1,\dotsc,x_{i-1}) \in
\prod_{j=1}^{i-1} X_{o^{-1}(j)}$, there exists $z\in X_{o^{-1}(i)}$
depending only on $(x_1,\dotsc,x_{i-1})$ and satisfying the following property:
whenever $\prod_{\alpha\in \Psi_{\red}} y_\alpha \in Y$ is such that
$y_{o^{-1}(j)} = x_j$, for all $1\le j \le i-1$, we have $y_{o^{-1}(i)} \in
zZ_{o^{-1}(i)}$.
}
\end{equation} 
Then $\lvert Y \rvert \le \prod_{\alpha\in \Psi_{\red}} \lvert
Z_{\alpha}\rvert$.
\end{lem}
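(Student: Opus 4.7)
The plan is a clean fiber-counting argument using the bijective parametrization $X \cong \prod_{\alpha \in \Psi_{\red}} X_\alpha$ provided by the chosen ordering $o$. Let $n \coloneqq \lvert \Psi_{\red}\rvert$, identify each element of $Y$ with its corresponding tuple $(y_1, \ldots, y_n) \in \prod_{i=1}^n X_{o^{-1}(i)}$, and introduce, for $0 \le i \le n$, the truncation sets
\[
A_i \coloneqq \bigl\{(y_1, \ldots, y_i) : (y_1,\ldots,y_n) \in Y \text{ for some } y_{i+1},\ldots, y_n\bigr\}.
\]
Then $A_0$ is a singleton and $A_n = Y$, so it suffices to bound the sizes of the $A_i$ inductively.

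The key step is to show $\lvert A_i\rvert \le \lvert A_{i-1}\rvert \cdot \lvert Z_{o^{-1}(i)}\rvert$ for each $1 \le i \le n$. For this I would consider the truncation map $A_i \to A_{i-1}$ that forgets the last coordinate. By hypothesis~\eqref{eq:condition}, for every tuple $(x_1,\ldots,x_{i-1}) \in A_{i-1}$ there exists some $z \in X_{o^{-1}(i)}$ (depending only on that tuple) such that any extension $(x_1,\ldots,x_{i-1}, y_i) \in A_i$ satisfies $y_i \in z Z_{o^{-1}(i)}$. The fiber of the truncation map over $(x_1,\ldots,x_{i-1})$ therefore has at most $\lvert Z_{o^{-1}(i)}\rvert$ elements, which gives the desired inductive bound.

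Iterating this bound from $i=1$ to $i=n$ yields
\[
\lvert Y\rvert = \lvert A_n\rvert \le \prod_{i=1}^n \lvert Z_{o^{-1}(i)}\rvert = \prod_{\alpha \in \Psi_{\red}} \lvert Z_\alpha\rvert,
\]
which is the conclusion. There is not really a hard step here: hypothesis~\eqref{eq:condition} has been formulated precisely so that this layer-by-layer count goes through, and the only subtlety worth naming is that the coset representative $z$ varies with the already-fixed coordinates $(x_1,\ldots,x_{i-1})$, so the fibers must be bounded individually rather than by a single uniform coset.
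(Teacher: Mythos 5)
Your proof is correct and is essentially the same fiber-counting argument as the paper's: both exploit the bijection $X \cong \prod_{\alpha\in\Psi_{\red}} X_\alpha$ and use hypothesis~\eqref{eq:condition} to bound, layer by layer, the number of admissible $i$-th coordinates given the first $i-1$. The paper organizes this as a descending induction on the sets $Y(x_1,\dotsc,x_i)$ of full tuples with prescribed prefix, while you run an ascending induction on the sets of prefixes $A_i$ and bound the fibers of the truncation map $A_i\to A_{i-1}$; these are dual packagings of the same count (and the edge case $Y=\emptyset$, where $A_0$ is empty rather than a singleton, is trivially fine).
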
 
\begin{proof} 
Given $0\le i\le \lvert\Psi_{\red}\rvert$ and $(x_1,\dotsc,x_{i}) \in
\prod_{j=1}^{i}X_{o^{-1}(j)}$, we put
\[
Y(x_1,\dotsc,x_{i}) \coloneqq \set{\prod_{\alpha\in \Psi_{\red}}y_\alpha \in
Y}{\text{$y_{o^{-1}(j)} = x_j$ for all $1\le j\le i$}}.
\]
For $i=0$ we define $(x_1,\dotsc,x_0)$ to be the empty tuple $()$. In this case
we have $Y() = Y$.
Whenever $i<\lvert\Psi_{\red}\rvert$ we say that $x_{i+1}\in X_{o^{-1}(i+1)}$
\emph{extends} $(x_1,\dotsc,x_{i})$ if $Y(x_1,\dotsc,x_{i+1})$ is not the empty
set. 

With $i$ as above we will prove
\begin{equation}\label{eq:extra-1}
\lvert Y(x_1,\dotsc,x_{i})\rvert \le \prod_{j=i+1}^{\lvert\Psi_{\red}\rvert}
\lvert Z_{o^{-1}(j)}\rvert,\qquad \text{for all $(x_1,\dotsc,x_{i}) \in
\prod_{j=1}^{i}X_{o^{-1}(j)}$}
\end{equation}
by descending induction on the length $i$ of the tuple $(x_1,\dotsc,x_{i})$.
Then \eqref{eq:extra-1} for $i = 0$ is precisely the assertion of the lemma.

The base case $i= \lvert\Psi_{\red}\rvert$ is satisfied, because the
right hand side equals $1$ (being the empty product) and since $\lvert
Y(x_1,\dotsc,x_{\lvert\Psi_{\red}\rvert})\rvert$ is $1$ or $0$ depending on
whether $x_1\dotsm x_{\lvert\Psi_{\red}\rvert}$ lies in $Y$ or not. Assume now
that \eqref{eq:extra-1} is satisfied for some $1\le i\le
\lvert\Psi_{\red}\rvert$. Take $(x_1,\dotsc,x_{i-1}) \in \prod_{j=1}^{i-1}
X_{o^{-1}(j)}$ and put 
\[
J(x_1,\dotsc,x_{i-1}) \coloneqq \set{y\in X_{o^{-1}(i)}}{\text{$y$ extends
$(x_1,\dotsc,x_{i-1})$}}.
\]
We may assume that $Y(x_1,\dotsc,x_{i-1})$ is non-empty. In this case
$J(x_1,\dotsc,x_{i-1})$ is also non-empty and condition~\eqref{eq:condition}
implies $\lvert J(x_1,\dotsc,x_{i-1})\rvert = \lvert Z_{o^{-1}(i)}\rvert$.
Observe that 
\[
Y(x_1,\dotsc,x_{i-1}) = \bigcup_{y\in J(x_1,\dotsc,x_{i-1})}
Y(x_1,\dotsc,x_{i-1},y).
\]
We now compute
\begin{align*}
\lvert Y(x_1,\dotsc,x_{i-1})\rvert &\le \sum_{y\in J(x_1,\dotsc,x_{i-1})} \lvert
Y(x_1,\dotsc,x_{i-1},y)\rvert\\
&\le \sum_{y\in J(x_1,\dotsc,x_{i-1})} \prod_{j=i+1}^{\lvert\Psi_{\red}\rvert}
\lvert Z_{o^{-1}(j)}\rvert\\
&= \lvert Z_{o^{-1}(i)}\rvert\cdot \prod_{j=i+1}^{\lvert\Psi_{\red}\rvert}
\lvert Z_{o^{-1}(j)}\rvert = \prod_{j=i}^{\lvert\Psi_{\red}\rvert} \lvert
Z_{o^{-1}(j)}\rvert,
\end{align*}
where the second estimate uses the induction hypothesis.
This finishes the induction step and proves the lemma.
\end{proof} 

\subsection{An inequality of indices}\label{subsec:inequality} 
Let $\Gamma\subseteq P$ be an open compact subgroup with $\Gamma = \Gamma_M
I_{U_P}$, where $\Gamma_M = \Gamma\cap M$ and $I_{U_P} = I\cap U_P$. Here $I$ is
the (fixed) Iwahori subgroup of $G$. For example, $\Gamma$ could be $K\cap P$ or
$I\cap P$ or even $I_1\cap P$.
\begin{rmk} 
Since the function $\mu_{U_P}$ takes values in $q^{\Z_{\ge0}}$, there is an
equivalence
\[
\mu_{U_P}(g) \le \mu_{U_P}(g') \iff \text{$\mu_{U_P}(g)$ divides
$\mu_{U_P}(g')$.}
\]
Therefore, we use ``$\le$'' in this context to implicitly mean ``divides''.
\end{rmk} 

The main goal of this section is to prove the following fundamental result.
\begin{prop}\label{prop:inequality} 
Each $g\in P$ with image $g_M$ in $M$ satisfies
\[
\mu_{U_P}(g) \ge \mu_{U_P}(g_M).
\]
\end{prop}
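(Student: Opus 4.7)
The plan is to establish the equivalent inequality of cardinalities $|(I_{U_P})_{(g)}/J| \le |(I_{U_P})_{(g_M)}/J|$ in a common finite quotient, where $J \coloneqq \prod_{\alpha \in \Psi_{\red}} U_{(\alpha,k)}$ for $\Psi \coloneqq (\Phi^+ \setminus \Phi_M)_{\red}$ and $k$ is large enough that $J$ is normal in $U_P$ and contained in both $(I_{U_P})_{(g)}$ and $(I_{U_P})_{(g_M)}$. Factoring $g = g_M g_{U_P}$ and writing $C \coloneqq g_M^{-1} I_{U_P} g_M$, we observe that $C$ admits a root-group product decomposition $C = \prod_{\alpha \in \Psi_{\red}} C_\alpha$ with $C_\alpha \subseteq U_\alpha$, because conjugation by $g_M \in M$ permutes the root groups indexed by $\Psi_{\red}$ via $\vnu(g_M) \in W_{0,M}$. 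Consequently $(I_{U_P})_{(g_M)} = I_{U_P} \cap C = \prod_\alpha H_{1,\alpha}$ with $H_{1,\alpha} \coloneqq (I_{U_P})_\alpha \cap C_\alpha$, whereas $(I_{U_P})_{(g)} = I_{U_P} \cap g_{U_P}^{-1} C g_{U_P}$.

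I would apply Lemma~\ref{lem:extra} to $X \coloneqq I_{U_P}/J$ with the induced root-group datum $X_\alpha \coloneqq (I_{U_P})_\alpha / J_\alpha$, choosing an ordering $o$ of $\Psi_{\red}$ that refines the partial order ($\beta < \alpha \Rightarrow o(\beta) < o(\alpha)$), and taking $Y$ to be the image of $(I_{U_P})_{(g)}$. To verify condition~\eqref{eq:condition}, apply Lemma~\ref{lem:aut-RG} to the automorphism of $U_P$ given by conjugation by $g_{U_P}$; its hypothesis follows from the standard commutator relations $[U_\beta, U_\alpha] \subseteq \langle U_{r\beta + s\alpha} : r,s > 0 \rangle$ for root groups inside $U_P$. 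This yields $g_{U_P} (\prod_\alpha x_\alpha) g_{U_P}^{-1} = \prod_\alpha z_\alpha \wt z_\alpha(x_\alpha) x_\alpha$, with $z_\alpha$ depending only on $(x_\beta)_{\beta < \alpha}$ and $\wt z_\alpha \colon U_\alpha \to U_{2\alpha}$ a fixed homomorphism factoring through $U_\alpha / U_{2\alpha}$. By uniqueness of the product decomposition of $C$, the condition $g_{U_P} x g_{U_P}^{-1} \in C$ splits componentwise into $z_\alpha \wt z_\alpha(x_\alpha) x_\alpha \in C_\alpha$ for each $\alpha$.

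Setting $\phi_\alpha(y) \coloneqq \wt z_\alpha(y) y$ defines an endomorphism of $U_\alpha$ that is in fact a bijection, since it restricts to the identity on $U_{2\alpha}$ and induces the identity modulo $U_{2\alpha}$. The set of valid $x_{\alpha_i}$ extending a given tuple $(x_\beta)_{\beta < \alpha_i}$ is therefore a single coset of $\phi_{\alpha_i}^{-1}(C_{\alpha_i}) \cap (I_{U_P})_{\alpha_i}$ inside $(I_{U_P})_{\alpha_i}$; taking $Z_{\alpha_i}$ to be the image of this intersection in $X_{\alpha_i}$ verifies \eqref{eq:condition} in the quotient. Provided $\phi_{\alpha_i}$ preserves the filtration by the $U_{(\alpha_i, n)}$'s, one has $|Z_{\alpha_i}| = |H_{1,\alpha_i}/J_{\alpha_i}|$, so Lemma~\ref{lem:extra} yields $|Y| \le \prod_\alpha |Z_\alpha| = |(I_{U_P})_{(g_M)}/J|$, which is the desired inequality.

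The main obstacle lies in the double-root case $2\alpha \in \Phi$, where $\wt z_\alpha$ is non-trivial: one must verify carefully that $\phi_\alpha$ preserves the filtration $\{U_{(\alpha, n)}\}_n$, so that the claimed cardinality $|Z_{\alpha_i}| = |H_{1,\alpha_i}/J_{\alpha_i}|$ holds. When $2\alpha \notin \Phi$, $\wt z_\alpha$ is trivial, $\phi_\alpha = \id$, and the argument is immediate. A secondary subtlety is to choose $J$ small enough that conjugation by $g_{U_P}$ and the auxiliary data $z_\alpha, \wt z_\alpha$ all descend to well-defined maps on $X = I_{U_P}/J$ fulfilling the hypotheses of Lemma~\ref{lem:aut-RG}, while large enough that $J \subseteq (I_{U_P})_{(g)} \cap (I_{U_P})_{(g_M)}$.
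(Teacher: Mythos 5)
Your proposal follows essentially the same strategy as the paper: decompose $I_{U_P}$ (and its conjugate by $g_M$) into root-group factors indexed by $\Psi_{\red}$, apply Lemma~\ref{lem:aut-RG} to conjugation by the unipotent part $g_U$ of $g$ to get the triangular expression with $z_\alpha$ and $\wt z_\alpha$, and then use Lemma~\ref{lem:extra} to bound $\lvert(I_{U_P})_{(g)}/J\rvert$ by a product over $\alpha$. (Two minor differences: you skip the reduction to $g_M\in Z$ via the Cartan decomposition, which the paper uses to simplify $C_\alpha$ to the form in~\eqref{eq:inequality-1}, and you parametrize by the argument $(x_\alpha)$ of the conjugation automorphism rather than its image $(y_\alpha)$; both are cosmetic.)

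The genuine gap is exactly where you flag it, and your proposed fix does not hold. You want $\lvert Z_\alpha\rvert \le \lvert H_{1,\alpha}/J_\alpha\rvert$, i.e.
\[
\bigl[U_{(\alpha,0)} : \phi_\alpha^{-1}(C_\alpha)\cap U_{(\alpha,0)}\bigr] \ge \bigl[U_{(\alpha,0)} : C_\alpha\cap U_{(\alpha,0)}\bigr],
\]
and you propose to get this by showing that $\phi_\alpha(y)=\wt z_\alpha(y)\,y$ preserves the filtration $\{U_{(\alpha,n)}\}_n$. That is false in general: $\wt z_\alpha$ comes from commutators with the root-group components of $g_U$, and since $g_U$ is an arbitrary element of $U_P$ (not of $I_{U_P}$) its $U_\alpha$-component can have arbitrarily negative valuation, so $\wt z_\alpha$ can shift valuations and $\phi_\alpha(U_{(\alpha,n)})\neq U_{(\alpha,n)}$. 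What is true, and what the paper exploits, is weaker: $\phi_\alpha$ is the identity on $U_{2\alpha}$ and induces the identity on $U_\alpha/U_{2\alpha}$. The paper converts these two facts about the associated graded pieces into the index inequality via the auxiliary category $\FGrp$ and Lemma~\ref{lem:gr} (see the proof of Lemma~\ref{lem:inequality-3}: Step~1 is equality in $\gr_0$, Step~2 is containment in $\gr_1$, Step~3 applies Lemma~\ref{lem:gr}.\ref{lem:gr-b}). You would need to run the analogous graded comparison for $\phi_\alpha^{-1}(C_\alpha)\cap U_{(\alpha,0)}$ versus $C_\alpha\cap U_{(\alpha,0)}$; filtration preservation, which you assume but cannot have, is a red herring.

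A secondary unaddressed point: you also need that the relevant coset of $\phi_\alpha^{-1}(C_\alpha)\cap U_{(\alpha,0)}$ depends only on the images of $(x_\beta)_{\beta<\alpha}$ modulo $J$, not on the $x_\beta$ themselves, to satisfy condition~\eqref{eq:condition} in the quotient $I_{U_P}/J$. The paper handles this explicitly by choosing $r$ so that $U_{(\beta,r)}\subseteq f_\beta(U_{(\beta,0)}^{g_M})\cap U_{(\beta,0)}$ for all $\beta$; you mention the issue but should make the choice of $J$ precise in the same way.
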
 

\begin{ex} 
The ensuing proof is long and technical notwithstanding the lemmas in
subsection~\ref{subsec:technical}. We will therefore discuss first an example in
order to fix ideas. Let $G = \GL_3(\Q_p)$, $P$ the Borel subgroup of upper
triangular matrices, and $M$ the torus of diagonal matrices. Let $I$ be the
standard Iwahori determined by $P$. Let $g = g_Ug_M$ with $g_U =
\begin{psmallmatrix}1 & x & z\\0 & 1 & y\\0 & 0 & 1\end{psmallmatrix}\in U_P$
and $g_M = \diag(p^{m+n}, p^n, 1)\in M$ with $n,m\in\Z$. The inequality in
Proposition~\ref{prop:inequality} is equivalent to
\begin{equation}\label{ex:cardinality}
\big\lvert (I_{U_P})_{(g)}/H\big\rvert \le \big\lvert
(I_{U_P})_{(g_M)}/H\big\rvert,
\end{equation}
where $H \subseteq I_{U_P}$ is any sufficiently small open normal subgroup.
For a general element
$\begin{psmallmatrix}1 & u & w\\0 & 1 & v\\0 & 0 & 1\end{psmallmatrix}$ of
$I_{U_P}$ we compute
\begin{equation} \label{ex:gM} 
g_M^{-1}\begin{pmatrix}1 & u & w\\0 & 1 & v\\0 & 0 & 1\end{pmatrix} g_M =
\begin{pmatrix}1 & p^{-m}u & p^{-m-n}w\\ 0 & 1 & p^{-n}v\\ 0 & 0 &
1\end{pmatrix}.
\end{equation} 

For \eqref{ex:gM} to lie in $(I_{U_P})_{(g_M)}$ it is necessary and sufficient
that $u$, $v$, $w$ satisfy the following conditions:

\NumTabs{3}
\begin{enumerate*}[label=(\roman*), itemjoin={\tab}]
\item\label{ex:i} $\val_{\Q_p}(p^{-m}u) \ge 0$;
\item\label{ex:ii} $\val_{\Q_p}(p^{-n}v) \ge 0$;
\item\label{ex:iii} $\val_{\Q_p}(p^{-m-n}w) \ge 0$.
\end{enumerate*}

We deduce 
\begin{equation}\label{eq:IUgM}
(I_{U_P})_{(g_M)} = \begin{psmallmatrix}1 & p^{\max\{0,-m\}}\Z_p &
p^{\max\{0,-m-n\}}\Z_p\\ 0 & 1 & p^{\max\{0,-n\}}\Z_p\\ 0 & 0 &
1\end{psmallmatrix}.
\end{equation}
\medskip

Now, compute that $g^{-1}\begin{psmallmatrix}1 & u & w\\0 & 1 & v\\0 & 0 &
1\end{psmallmatrix}g$ equals
\begin{equation} \label{ex:g} 
\begin{pmatrix}
1 & p^{-m}u & y\cdot p^{-m}u - x\cdot p^{-n}v + p^{-m-n}w\\
0 & 1 & p^{-n}v\\
0 & 0 & 1
\end{pmatrix}.
\end{equation} 
Observe that \eqref{ex:g} and the right hand side of \eqref{ex:gM} differ only
in the upper right entry, and their difference is $z(u,v)\coloneqq y\cdot
p^{-m}u - x\cdot p^{-n}v$ which depends only on the terms coming from root
groups of smaller roots. 
Lemma~\ref{lem:aut-RG} shows that this is the general behavior.
For \eqref{ex:g} to lie in $(I_{U_P})_{(g)}$ it is necessary that
$u$ and $v$ satisfy \ref{ex:i}, \ref{ex:ii}, and 
\begin{enumerate}[resume, start=3, label=(\roman*')]
\item\label{ex:iii'} $\val_{\Q_p}(z(u,v)) \ge \min\{0,
-m-n\}$.
\end{enumerate}
The important observation to make here is that \ref{ex:iii'} may fail even if
$u$ and $v$ satisfy \ref{ex:i} and \ref{ex:ii}. This further restriction on $u$
and $v$ is the main reason why there is an inequality in
\eqref{ex:cardinality}.
Now, we assume that $u$ and $v$ do satisfy \ref{ex:iii'} and we determine the
possible upper right entries in \eqref{ex:g}. If $\val_{\Q_p}(z(u,v)) \ge -m-n$,
then the possible entries lie exactly in $p^{\max\{0,-m-n\}}\Z_p$. If, on the
other hand, we have $-m-n > \val_{\Q_p}(z(u,v)) \ge 0$, then they lie exactly in
the proper coset $z(u,v) + p^{\max\{0,-m-n\}}\Z_p$. Note that in both cases
$p^{\max\{0,-m-n\}}\Z_p$ does not depend on $u$ and $v$.
If $H = \begin{psmallmatrix}1 & p^r\Z_p & p^r\Z_p\\ 0 & 1 & p^r\Z_p\\ 0 & 0 &
1\end{psmallmatrix}$, for $r\gg0$ large enough, this discussion shows
\begin{align*}
\big\lvert (I_{U_P})_{(g)}/H\big\rvert &\le
\left\lvert\frac{p^{\max\{0,-m\}}\Z_p}{p^r\Z_p}\right\rvert\cdot \left\lvert
\frac{p^{\max\{0,-n\}}\Z_p}{p^r\Z_p}\right\rvert\cdot \left\lvert
\frac{p^{\max\{0,-m-n\}}\Z_p}{p^r\Z_p}\right\rvert
= \big\lvert (I_{U_P})_{(g_M)}/H\big\rvert.
\end{align*}
The role of Lemma~\ref{lem:extra} is to show that we can make this estimate in
general.
\end{ex} 
\medskip

We now turn to the proof of Proposition~\ref{prop:inequality}. As the example
above illustrates, it will be necessary to analyse $(I_{U_P})_{(g)} = I_{U_P}
\cap g^{-1}I_{U_P} g$. We will make extensive use of the identification 
\[
I_{U_P} \cong \prod_{\alpha\in\Sigma^+\setminus \Sigma_M} U_{(\alpha,0)},
\]
which follows from \eqref{eq:I1-homeo}. As $\alg G$ is not assumed
$\field$-split, the root system $\Phi$ need not be reduced. If $\alpha,
2\alpha\in \Phi$, the root group $U_{\alpha}$ is not abelian in general; it
contains the non-trivial abelian subgroup $U_{2\alpha}$, and the quotient
$U_{\alpha}/U_{2\alpha}$ is abelian. This phenomenon motivates the next
definition.

\begin{defn} 
Let $\FGrp$ be the category whose
\begin{enumerate}[label=$-$]
\item objects are pairs $(X_0,X)$ of groups such that $X_0$ is a normal subgroup
of $X$;
\item morphisms $(X_0,X) \to (X'_0,X')$ are group homomorphisms $f\colon X\to
X'$ satisfying $f(X_0) \subseteq f(X'_0)$.
\end{enumerate}
We write $(X_0,X) \subseteq (X'_0,X')$ if $X\subseteq X'$ and $X_0 = X'_0\cap
X$.
\end{defn} 

There is a canonical functor $\gr\colon \FGrp \to \Grp$ into the category of
graded groups given by
\[
\gr(X_0,X) \coloneqq \gr_0(X_0,X) \times \gr_1(X_0,X),
\]
where $\gr_0(X_0,X) \coloneqq X_0$ and $\gr_1(X_0,X) \coloneqq X/X_0$. We will
need the following elementary lemma, the proof of which will be left as an
exercise for the reader.

\begin{lem}\label{lem:gr} 
Let $(X_0,X) \subseteq (X'_0,X')$ be objects in $\FGrp$.
\begin{enumerate}[label=(\alph*)]
\item\label{lem:gr-a} One has $\gr(X_0,X) \subseteq \gr(X'_0,X')$.
\item\label{lem:gr-b} Assume $[X':X] < \infty$. Then $[X':X] = [\gr(X'_0,X') :
\gr(X_0,X)]$.
\end{enumerate}
\end{lem}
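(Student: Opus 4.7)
Both parts are elementary consequences of working componentwise with the graded group $\gr(X_0,X) = X_0 \times X/X_0$ and exploiting the hypothesis $X_0 = X'_0 \cap X$. There is no real obstacle; the statement is placed as an exercise for the reader, so my goal is to outline what one has to check rather than to carry out extensive computation.

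For part~\ref{lem:gr-a}, I would split into the two graded pieces. The degree-zero inclusion $X_0 \subseteq X'_0$ is immediate from $X_0 = X'_0 \cap X$. For the degree-one piece, the approach is to consider the composite $X \hookrightarrow X' \twoheadrightarrow X'/X'_0$. Its kernel is $X \cap X'_0 = X_0$, so by the first isomorphism theorem it factors through an injection $X/X_0 \hookrightarrow X'/X'_0$. This is precisely the inclusion $\gr_1(X_0,X) \subseteq \gr_1(X'_0,X')$.

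For part~\ref{lem:gr-b}, the plan is to use the standard multiplicativity of indices along the chain $X \subseteq X X'_0 \subseteq X'$, namely
\[
[X':X] = [X' : X X'_0] \cdot [X X'_0 : X].
\]
By the second isomorphism theorem, $X X'_0 / X \cong X'_0 / (X \cap X'_0) = X'_0/X_0$, so $[X X'_0 : X] = [X'_0 : X_0]$. Dividing by $X'_0$, we get $[X' : X X'_0] = [X'/X'_0 : X X'_0/X'_0]$, and a second application of the same theorem yields $X X'_0/X'_0 \cong X/X_0$, hence $[X' : X X'_0] = [X'/X'_0 : X/X_0]$. Note that finiteness of $[X':X]$ forces finiteness of both factors, so all manipulations are legal. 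Combining the two formulas gives
\[
[X':X] = [X'_0 : X_0] \cdot [X'/X'_0 : X/X_0].
\]
The right-hand side is exactly the index of $\gr(X_0,X) = X_0 \times X/X_0$ in $\gr(X'_0,X') = X'_0 \times X'/X'_0$, since an index in a direct product of groups factors as the product of the componentwise indices. This completes the plan for~\ref{lem:gr-b}.
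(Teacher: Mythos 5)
The paper explicitly leaves this lemma as an exercise for the reader, so there is no proof of record to compare against; your argument is correct and is surely the intended one. The only point worth flagging is a small imprecision of phrasing in part~(b): $X$ need not be normal in $XX'_0$, so $XX'_0/X$ is in general only a coset space rather than a quotient group, and the appeal to ``the second isomorphism theorem'' for $XX'_0/X \cong X'_0/(X \cap X'_0)$ is not quite literal. What you actually need is the index version of the product formula: since $X'_0$ is normal in $X'$ (hence $XX'_0 = X'_0X$ is a subgroup), the map $X'_0/X_0 \to XX'_0/X$, $nX_0 \mapsto nX$, is a well-defined bijection of left coset spaces, which gives $[XX'_0:X] = [X'_0:X_0]$ without any normality of $X$. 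The remaining steps---the correspondence theorem reducing $[X' : XX'_0]$ to $[X'/X'_0 : XX'_0/X'_0]$, the genuine second isomorphism $XX'_0/X'_0 \cong X/X_0$, the identification of $X/X_0 \subseteq X'/X'_0$ from part~(a) with $XX'_0/X'_0$, and the factorization $[A'\times B' : A\times B] = [A':A]\cdot[B':B]$---are all in order, as is the observation that finiteness of $[X':X]$ makes the two factors finite so the multiplicativity is unproblematic.
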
 

\subsubsection{Proof of Proposition~\ref{prop:inequality}} 
Let $g\in P$. Recall that $g_M$ denotes the image of $g$ in $M$ and that
$g_U\coloneqq g_M^{-1}g \in U_P$. The group $K_M = K\cap M$ normalizes $I_{U_P}$
and hence the
function $\mu_{U_P}\colon P\to q^{\Z_{\ge0}}$ is constant on $K_MgK_M$. By
\cite[Lemma 4.1.1]{Haines-Rostami.2009} the group $K_M$ is a maximal parahoric
subgroup of $M$. Hence, the Cartan decomposition
\cite[Theorem~1.0.3]{Haines-Rostami.2009} implies that the intersection
$K_Mg_MK_M\cap Z$ is non-empty. Thus, we may assume $g_M\in Z$.

For each $\alpha\in \Sigma^+\setminus\Sigma_M$ we have $g_M^{-1}U_{(\alpha,0)}
g_M = U_{(\alpha, \langle \alpha, \nu(g_M)\rangle)}$ by \eqref{eq:U-action} and
hence
\begin{equation}\label{eq:inequality-1}
(U_{(\alpha,0)})_{(g_M)} = \begin{cases} U_{(\alpha,0)}, & \text{if $\langle
\alpha, \nu(g_M)\rangle \le 0$,}\\
U^{g_M}_{(\alpha,0)}, & \text{otherwise.}
\end{cases}
\end{equation}
Write $\Psi \coloneqq \Phi^+\setminus \Phi_M$ and choose an ordering $o$ of the
factors of $\prod_{\alpha\in \Psi_{\red}}U_\alpha$ in such a way that
$\beta<\alpha$ implies $o(\beta) < o(\alpha)$. 

Consider the automorphism $f\colon U_P\to U_P$, $x\mapsto g_U^{-1}xg_U$. We
proceed with a series of lemmas.

\begin{lem}\label{lem:inequality-1} 
For each $x_\alpha\in U_\alpha$, $\alpha\in \Psi_{\red}$, the element
$f(x_\alpha)x_\alpha^{-1}$ is contained in the subgroup $\langle U_\beta\mid
\beta >\alpha\rangle$ of $U_P$ generated by the $U_\beta$ for
$\beta\in\Psi_{\red}$ with $\beta >\alpha$.
\end{lem}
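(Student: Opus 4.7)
The plan is to reinterpret $f(x_\alpha)x_\alpha^{-1} = g_U^{-1}x_\alpha g_U x_\alpha^{-1}$ as the commutator $[g_U^{-1}, x_\alpha]$ (using the convention $[a,b]=aba^{-1}b^{-1}$), and then unfold it via the Chevalley--Bruhat--Tits commutator relations between root groups, keeping track of which positive roots appear.

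First I would decompose $g_U$. Since $g_U \in U_P$ and $U_P = \prod_{\gamma\in\Psi_{\red}}U_\gamma$ (by the analog of \eqref{eq:I1-homeo} applied to the positive part $U_P$, or directly via \cite[Lemme 6.1.7]{Bruhat-Tits.1972}) with $\Psi = \Phi^+\setminus \Phi_M$, I can write $g_U^{-1} = u_1 u_2 \cdots u_n$ where each $u_i \in U_{\gamma_i}$ for some $\gamma_i\in\Psi_{\red}$. Applying repeatedly the elementary identity $[ab,c]=a[b,c]a^{-1}\cdot[a,c]$, the commutator $[g_U^{-1},x_\alpha]$ expands as a product of conjugates (by subproducts of the $u_j$) of the individual commutators $[u_i,x_\alpha]$.

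Now for each single commutator, the axiom (RG) of a generating root datum (see \cite[(6.1.2)]{Bruhat-Tits.1972}, which is also the content of hypothesis (ii) of Lemma~\ref{lem:aut-RG}) gives
\[
[u_i,x_\alpha] \in \bigl\langle U_{r\gamma_i+s\alpha}\bigm| r,s\in \Z_{>0},\ r\gamma_i+s\alpha\in\Phi\bigr\rangle.
\]
Since $\gamma_i\in\Psi\subseteq \Phi^+$ and $r,s>0$, the defining relation of the partial order on $\Psi$ (take $r'=s$, $s_1=r$, $\gamma_1=\gamma_i$) gives $\alpha \le r\gamma_i+s\alpha$, and equality would force $r\gamma_i=(1-s)\alpha$, which is impossible since $r>0$ and $s>0$ while $\gamma_i,\alpha$ are positive roots. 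Hence each such root is strictly greater than $\alpha$, so $[u_i,x_\alpha]\in \langle U_\beta\mid \beta>\alpha\rangle$.

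It remains to argue that the subgroup $N_\alpha\coloneqq \langle U_\beta\mid\beta>\alpha\rangle$ is stable under conjugation by the elements $u_j$ (and, if needed in the expansion, by $x_\alpha$ itself). For any $\beta>\alpha$ and any $\gamma\in\Psi_{\red}$, the same commutator relation yields $u_j U_\beta u_j^{-1} \subseteq U_\beta\cdot\langle U_{r\gamma_j+s\beta}\rangle$; by the argument above applied to $\beta$ in place of $\alpha$, we have $r\gamma_j+s\beta>\beta$, and transitivity of the partial order gives $r\gamma_j+s\beta>\alpha$, so $u_j N_\alpha u_j^{-1}\subseteq N_\alpha$. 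An identical computation handles conjugation by $x_\alpha$. Therefore the whole expansion of $[g_U^{-1},x_\alpha]$ lies in $N_\alpha$, which is what we wanted.

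The main obstacle is less conceptual than bookkeeping: one must track the conjugations arising from the iterated commutator expansion and verify in each step that \emph{strict} inequality $>\alpha$ is preserved (not merely $\ge\alpha$), which is where the positivity of $\gamma_i$ in $\Phi^+$ and the strict positivity of the coefficients $r,s$ in the Chevalley relations are essential. A secondary (minor) point is the non-reduced case, where $U_{\gamma_i}$ may be non-abelian; this is absorbed into the same commutator framework since $U_{2\gamma_i}\subseteq U_{\gamma_i}$ and the commutator relations apply uniformly.
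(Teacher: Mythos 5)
Your proof is correct and follows essentially the same route as the paper's: decompose $g_U$ (equivalently $g_U^{-1}$) into root group factors, invoke the commutator axiom (DR~2) for each individual commutator, and observe that the upward-closed subgroup $\langle U_\beta \mid \beta > \alpha\rangle$ is stable under the conjugations that arise. The paper packages this as an induction on the number of factors of $g_U$ via the identity $g_U^{-1}x_\alpha g_U x_\alpha^{-1} = y^{u_{\alpha_r}}\cdot[u_{\alpha_r}^{-1},x_\alpha]$, whereas you unfold the commutator $[g_U^{-1},x_\alpha]$ with the identity $[ab,c]=a[b,c]a^{-1}[a,c]$; these are the same argument in different notation. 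Your explicit verification of strictness ($r\gamma_i+s\alpha > \alpha$ rather than merely $\ge$) and of the conjugation-stability of $N_\alpha$ makes visible two small points the paper's proof leaves implicit, which is fine.
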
 
\begin{proof} 
Write $g_U = u_{\alpha_1}\dotsm u_{\alpha_r}$ for certain $u_{\alpha_i} \in
U_{\alpha_i}$ and $\alpha_1,\dotsc,\alpha_r\in \Psi_{\red}$. We prove the
assertion by induction on $r$. 

As $(Z, (U_\alpha)_{\alpha\in\Phi})$ is a root group datum, it satisfies
\cite[(6.1.1), (DR 2)]{Bruhat-Tits.1972}:
\begin{enumerate}[label=(DR 2)]
\item\label{DR2} For each $\alpha,\beta\in\Phi$ the commutator subgroup
$[U_\alpha,U_\beta]$ is contained in the group generated by the $U_{r\alpha +
s\beta}$, for $r,s\in\Z_{>0}$ with $r\alpha+s\beta\in \Phi$.
\end{enumerate}
The base case $r=1$ is clear from \ref{DR2}. Now, assume $r>1$ and
\[
y\coloneqq (u_{\alpha_1}\dotsm u_{\alpha_{r-1}})^{-1}\cdot x_\alpha
\cdot (u_{\alpha_1}\dotsm u_{\alpha_{r-1}})\cdot x_\alpha^{-1} \in \langle
U_\beta\mid \beta >\alpha\rangle.
\]
Again from \ref{DR2} we know $y^{u_{\alpha_r}} \in \langle U_\beta \mid
\beta>\alpha\rangle$. Therefore,
$g_U^{-1}x_\alpha g_Ux_\alpha^{-1} = y^{u_{\alpha_r}}\cdot [u_{\alpha_r}^{-1},
x_\alpha]$ is contained in $\langle U_\beta\mid \beta>\alpha\rangle$, proving
the assertion.
\end{proof} 

Lemma~\ref{lem:inequality-1} allows us to apply Lemma~\ref{lem:aut-RG}: for
each $(x_\alpha)_\alpha\in \prod_{\alpha\in\Psi_{\red}} U_\alpha$ there
exist an element $z_\alpha(x_\beta)_{\beta<\alpha} \in U_\alpha$ (depending only
on $(x_\beta)_{\beta<\alpha}$) and
a group homomorphism $\wt z_\alpha\colon U_\alpha \to U_{2\alpha}$, 
factoring through $U_\alpha/U_{2\alpha}$ and independent of
$(x_\beta)_{\beta<\alpha}$, such that
\begin{equation}\label{eq:f(x)}
f\Bigl(\prod_{\alpha\in\Psi_{\red}} x_\alpha\Bigr) = \prod_{\alpha\in
\Psi_{\red}} z_\alpha(x_\beta)_{\beta<\alpha}\cdot \wt
z_{\alpha}(x_\alpha)\cdot x_\alpha.
\end{equation}
Identify $\Psi_{\red}$ with $\Sigma^+\setminus\Sigma_M$. For $\alpha\in
\Psi_{\red}$ we consider the homomorphism
\[
f_\alpha\colon U_{(\alpha,0)}^{g_M} \longrightarrow U_{\alpha},\quad x
\longmapsto \wt z_\alpha(x)\cdot x.
\]
\begin{rmk} 
Observe that $f_\alpha$ is the identity if the root system $\Phi$ is reduced
(\eg if $\alg G$ is $F$-split). In this case the next two lemmas are trivial.
\end{rmk} 

\begin{lem}\label{lem:inequality-2} 
The image of $f_\alpha$ is open in $U_\alpha$.
\end{lem}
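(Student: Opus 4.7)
The plan is to extend $f_\alpha$ to an endomorphism $\tilde f_\alpha$ of the whole group $U_\alpha$ and to show that this $\tilde f_\alpha$ is a topological automorphism. Once this is done, the conclusion is immediate: by \eqref{eq:action-U} we have $U_{(\alpha,0)}^{g_M} = U_{(\alpha,\langle\alpha,\nu(g_M)\rangle)}$, which is an open subgroup of $U_\alpha$, and a topological automorphism of $U_\alpha$ sends open subgroups to open subgroups. I note in passing that the case $2\alpha\notin \Phi$ is trivial since then $U_{2\alpha}=\{1\}$, $\wt z_\alpha\equiv 1$, and $\tilde f_\alpha=\id$; so the content of the lemma lies in the case $2\alpha\in\Phi$, on which I would focus.

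First I would define $\tilde f_\alpha\colon U_\alpha\to U_\alpha$ by the same formula $x\mapsto \wt z_\alpha(x)\cdot x$, so that $f_\alpha=\tilde f_\alpha|_{U_{(\alpha,0)}^{g_M}}$. That $\tilde f_\alpha$ is a group homomorphism follows from the facts that $\wt z_\alpha$ is a homomorphism and that its values lie in $U_{2\alpha}$, which is central in $U_\alpha$ by condition~\ref{rootgroup-ii}. For continuity I would invoke the last assertion of Lemma~\ref{lem:aut-RG}: with the ordering $o$ fixed earlier (for which $\beta<\alpha$ implies $o(\beta)<o(\alpha)$), $\wt z_\alpha(x)$ is the image of $f(x)x^{-1}$ under the projection $U_P\cong \prod_{\beta\in\Psi_{\red}} U_\beta \to U_\alpha$. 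Since $f(x)=g_U^{-1}xg_U$ is continuous and the projection is continuous, so is $\wt z_\alpha$, and hence so is $\tilde f_\alpha$.

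The heart of the argument is to exhibit an inverse. I would try $h\colon U_\alpha\to U_\alpha$, $y\mapsto y\cdot \wt z_\alpha(y)^{-1}$. To verify $\tilde f_\alpha\circ h = \id = h\circ \tilde f_\alpha$, one reduces (using centrality of $U_{2\alpha}$ in $U_\alpha$) to the equality $\wt z_\alpha(h(y)) = \wt z_\alpha(y)$, which in turn follows from $\wt z_\alpha|_{U_{2\alpha}}\equiv 1$ (because $\wt z_\alpha$ factors through $U_\alpha/U_{2\alpha}$). The map $h$ is continuous for the same projection reason as $\tilde f_\alpha$. Therefore $\tilde f_\alpha$ is a topological automorphism of $U_\alpha$, and so $f_\alpha(U_{(\alpha,0)}^{g_M})=\tilde f_\alpha(U_{(\alpha,0)}^{g_M})$ is open in $U_\alpha$.

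The main obstacle is spotting the explicit inverse $h$ and keeping track of the central commutation $\wt z_\alpha(y)\cdot y = y\cdot \wt z_\alpha(y)$; everything else is a routine verification.
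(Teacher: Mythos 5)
Your proof is correct and takes a genuinely different route from the paper. The paper works inside the normal subgroup $Z_\alpha \subseteq U_P$ generated by the $U_\beta$ with $\beta\not<\alpha$, shows that the projection $\pr_\alpha\colon Z_\alpha\to U_\alpha$ and the restricted conjugation $f'=f\big|_{Z_\alpha}$ are open maps, and then verifies the identity $f_\alpha\bigl(U_{(\alpha,0)}^{g_M}\bigr) = (\pr_\alpha\circ f')\bigl(U_{(\alpha,0)}^{g_M}\times\prod_{\beta\in\Psi'\setminus\{\alpha\}} U_{(\beta,0)}\bigr)$; that identity is what makes the computation work, but it relies on the finer structure of Lemma~\ref{lem:aut-RG} applied inside $Z_\alpha$ (the constancy of $z_\alpha$ and $\wt z_\alpha$ in the extra factors). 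You instead extend $f_\alpha$ to all of $U_\alpha$ by the same formula $\tilde f_\alpha(x)=\wt z_\alpha(x)\cdot x$, check it is a homomorphism via centrality of $U_{2\alpha}$, exhibit the explicit two-sided inverse $y\mapsto y\,\wt z_\alpha(y)^{-1}$ using $\wt z_\alpha\big|_{U_{2\alpha}}\equiv 1$, verify continuity of both from the formula $\wt z_\alpha(x)=\pr_\alpha\bigl(g_U^{-1}xg_U\cdot x^{-1}\bigr)$ (the last statement of Lemma~\ref{lem:aut-RG} with the chosen ordering), and then note that a homeomorphism sends the open subgroup $U_{(\alpha,0)}^{g_M}=U_{(\alpha,\langle\alpha,\nu(g_M)\rangle)}$ to an open subset. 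Both approaches ultimately rest on the homeomorphism $\prod_\beta U_\beta\cong U_P$, but yours avoids the auxiliary identity for $\pr_\alpha\circ f'$ and makes transparent that in the reduced case $\tilde f_\alpha=\id$; the paper's approach, by staying inside $Z_\alpha$, needs only openness of the projection and the conjugation rather than an explicit inverse. Both are complete and correct.
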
 
\begin{proof} 
Put $\Psi'\coloneqq \set{\beta\in\Psi_{\red}}{\beta\not<\alpha}$ and let
$Z_\alpha \subseteq U_P$ be the subgroup generated by the $U_\beta$ with
$\beta\in\Psi'$. Then $Z_\alpha$ is normal in $U_P$ by \ref{DR2}, and the
multiplication map induces a homeomorphism $\prod_{\beta\in \Psi'} U_\beta \cong
Z_\alpha$. The projection map $\pr_\alpha\colon Z_\alpha\to U_\alpha$ and the
automorphism $f'\coloneqq f\big|_{Z_\alpha}$, induced by the inner automorphism
$f$, are open. Hence, the subset
\[
f_\alpha\bigl(U_{(\alpha,0)}^{g_M}\bigr) = (\pr_\alpha\circ f') \Bigl(
U_{(\alpha,0)}^{g_M} \times \prod_{\beta\in \Psi'\setminus\{\alpha\}}
U_{(\beta,0)}\Bigr) \subseteq U_\alpha
\]
is open.
\end{proof} 

\begin{lem}\label{lem:inequality-3} 
$\bigl[U_{(\alpha,0)} : f_\alpha\bigl(U_{(\alpha,0)}^{g_M}\bigr)\cap
U_{(\alpha,0)}\bigr] \ge \bigl[U_{(\alpha,0)} : (U_{(\alpha,0)})_{(g_M)}\bigr]$
\end{lem}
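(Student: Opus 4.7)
The plan is to reduce the inequality to a modular comparison via Lemma~\ref{lem:gr}(b), taking as the relevant normal subgroup $N_0\coloneqq U_{2\alpha}\cap V_0$ of $V_0\coloneqq U_{(\alpha,0)}$. First I dispose of two easy cases: if $\langle\alpha,\nu(g_M)\rangle\le 0$ then \eqref{eq:inequality-1} gives $(V_0)_{(g_M)}=V_0$ and the right-hand side of the inequality equals $1$; if $2\alpha\notin\Psi$ then $U_{2\alpha}=\{1\}$ by convention, so $\wt z_\alpha$ is trivial, $f_\alpha=\id$, and equality holds. Hence I may assume $V_1\coloneqq U_{(\alpha,0)}^{g_M}\subseteq V_0$ equals $(V_0)_{(g_M)}$, and that $U_{2\alpha}$ is a nontrivial subgroup of $U_\alpha$, which is \emph{central} in $U_\alpha$ (by the proof of Lemma~\ref{lem:aut-RG}), with $\wt z_\alpha\colon U_\alpha\to U_{2\alpha}$ a homomorphism vanishing on $U_{2\alpha}$.

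Put $N_1\coloneqq U_{2\alpha}\cap V_1$ and $W\coloneqq f_\alpha(V_1)\cap V_0$; this last group has finite index in $V_0$ by Lemma~\ref{lem:inequality-2}. The centrality of $U_{2\alpha}$ in $U_\alpha$ makes $N_0$, $N_1$, and $N_0\cap W$ normal in $V_0$, $V_1$, and $W$ respectively, so the pairs $(N_1,V_1)\subseteq (N_0,V_0)$ and $(N_0\cap W,W)\subseteq (N_0,V_0)$ are valid inclusions in $\FGrp$. Lemma~\ref{lem:gr}(b) then gives
\[
[V_0:W] = [N_0:N_0\cap W]\cdot [V_0/N_0:WN_0/N_0],\qquad [V_0:V_1] = [N_0:N_1]\cdot [V_0/N_0:V_1 N_0/N_0],
\]
so the inequality $[V_0:W]\ge [V_0:V_1]$ reduces to the two claims \emph{(a)} $N_0\cap W = N_1$ and \emph{(b)} $WN_0\subseteq V_1 N_0$.

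For \emph{(a)}: if $y\in V_1$ satisfies $f_\alpha(y)\in U_{2\alpha}$, then reducing $f_\alpha(y)=\wt z_\alpha(y)\cdot y$ modulo $U_{2\alpha}$ forces $y\in U_{2\alpha}\cap V_1=N_1$, whence $\wt z_\alpha(y)=1$ and $f_\alpha(y)=y\in N_1$; conversely, $N_1\subseteq N_0\cap W$ is immediate since $f_\alpha$ restricts to the identity on $N_1\subseteq U_{2\alpha}$. For \emph{(b)}: whenever $y\in V_1$ with $f_\alpha(y)\in V_0$, one has $\wt z_\alpha(y) = f_\alpha(y)y^{-1}\in U_{2\alpha}\cap V_0 = N_0$, so $f_\alpha(y)\in V_1 N_0$. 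The main delicacy is the bookkeeping for the non-reduced case $2\alpha\in\Psi$; once one invokes the graded functor $\gr$ together with the centrality of $U_{2\alpha}$, both \emph{(a)} and \emph{(b)} become short verifications.
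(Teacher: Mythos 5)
Your proof is correct and is essentially the paper's argument: you decompose both indices via Lemma~\ref{lem:gr}\ref{lem:gr-b} using the pair $(U_{2\alpha}\cap V_0, V_0)$, show the $\gr_0$-components agree (your claim~(a) is the paper's Step~1) and the $\gr_1$-component of $W$ is contained in that of $V_1$ (your claim~(b) is the paper's Step~2, phrased with $N_0$ in place of $U_{2\alpha}$, which gives the same quotient indices by the third isomorphism theorem). The only cosmetic difference is that you dispose of the cases $\langle\alpha,\nu(g_M)\rangle\le 0$ and $2\alpha\notin\Psi$ at the outset, whereas the paper carries the general case through all three steps.
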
 
\begin{proof} 
For each subgroup $X$ of $U_\alpha$ we have $(U_{2\alpha}\cap X,X) \subseteq
(U_{2\alpha}, U_\alpha)$ in $\FGrp$. In order to simplify the notation we write
$X$ instead of $(U_{2\alpha}\cap X, X)$.\medskip

\textit{Step~1:} We show $\gr_0\bigl(f_{\alpha}\bigl(U_{(\alpha,0)}^{g_M}\bigr)
\cap U_{(\alpha,0)}\bigr) = \gr_0\bigl((U_{(\alpha,0)})_{(g_M)}\bigr)$, \ie
\begin{equation}\label{eq:inequality-2}
f_\alpha\bigl(U_{(\alpha,0)}^{g_M}\bigr) \cap U_{(\alpha,0)} \cap U_{2\alpha} =
(U_{(\alpha,0)})_{(g_M)} \cap U_{2\alpha}.
\end{equation}
Take $x\in U_{(\alpha,0)}$ with $f_{\alpha}(x^{g_M}) = \wt
z_{\alpha}(x^{g_M})\cdot x^{g_M} \in U_{(\alpha,0)}\cap U_{2\alpha}$. As $\wt
z_{\alpha}$ takes values in $U_{2\alpha}$, we must have $x^{g_M} \in
U_{2\alpha}$. As $\wt z_\alpha$ vanishes on $U_{2\alpha}$ we deduce
$f_\alpha(x^{g_M}) = x^{g_M}$ which is contained in the right hand side of
\eqref{eq:inequality-2}. Conversely, given $x\in U_{(\alpha,0)}$ with $x^{g_M}
\in U_{(\alpha,0)} \cap U_{2\alpha}$, we have $x^{g_M} = f_{\alpha}(x^{g_M})$,
which is contained in the left hand side of \eqref{eq:inequality-2}.\medskip

\textit{Step~2:} We prove $\gr_1\bigl(f_{\alpha}\bigl(U_{(\alpha,0)}^{g_M}\bigr)
\cap U_{(\alpha,0)}\bigr) \subseteq \gr_1\bigl((U_{(\alpha,0)})_{(g_M)}\bigr)$.
We first show
\begin{equation}\label{eq:inequality-3}
\bigl(f_\alpha\bigl(U_{(\alpha,0)}^{g_M}\bigr) \cap U_{(\alpha,0)}\bigr)\cdot
U_{2\alpha} \subseteq U_{(\alpha,0)}^{g_M}U_{2\alpha} \cap
U_{(\alpha,0)}U_{2\alpha} = (U_{(\alpha,0)})_{(g_M)}\cdot U_{2\alpha}.
\end{equation}
The inclusion is a consequence of
$f_\alpha\bigl(U_{(\alpha,0)}^{g_M}\bigr)U_{2\alpha} = U_{(\alpha,0)}^{g_M}
U_{2\alpha}$ and the equality follows from~\eqref{eq:inequality-1}. We compute
\begin{align*}
\gr_1\bigl(f_{\alpha}\bigl(U_{(\alpha,0)}^{g_M}\bigr) \cap U_{(\alpha,0)}\bigr)
&= \frac{f_{\alpha}\bigl(U_{(\alpha,0)}^{g_M}\bigr) \cap
U_{(\alpha,0)}}{f_{\alpha}\bigl(U_{(\alpha,0)}^{g_M}\bigr) \cap U_{(\alpha,0)}
\cap U_{2\alpha}}
= \frac{\bigl(f_{\alpha}\bigl(U_{(\alpha,0)}^{g_M}\bigr) \cap
U_{(\alpha,0)}\bigr)\cdot U_{2\alpha}}{U_{2\alpha}}\\
&\stackrel{\eqref{eq:inequality-3}}{\subseteq}
\frac{(U_{(\alpha,0)})_{(g_M)}\cdot U_{2\alpha}}{U_{2\alpha}}
= \frac{(U_{(\alpha,0)})_{(g_M)}}{(U_{(\alpha,0)})_{(g_M)}\cap U_{2\alpha}}
= \gr_1\bigl((U_{(\alpha,0)})_{(g_M)}\bigr).
\end{align*}

\textit{Step~3:} Proof of the assertion. Steps~1 and~2 imply
$\gr\bigl(f_\alpha\bigl(U_{(\alpha,0)}^{g_M}\bigr) \cap U_{(\alpha,0)}\bigr)
\subseteq \gr \bigl((U_{(\alpha,0)})_{(g_M)}\bigr)$. The index of
$f_\alpha\bigl(U_{(\alpha,0)}^{g_M}\bigr) \cap U_{(\alpha,0)}$ in
$U_{(\alpha,0)}$ is finite by Lemma~\ref{lem:inequality-2}. Applying
Lemma~\ref{lem:gr}.\ref{lem:gr-b} twice finally shows
\begin{align*}
\bigl[U_{(\alpha,0)} : f_{\alpha}\bigl(U_{(\alpha,0)}^{g_M}\bigr) \cap
U_{(\alpha,0)}\bigr] &= \bigl[\gr(U_{(\alpha,0)}) : \gr\bigl(
f_\alpha\bigl(U_{(\alpha,0)}^{g_M}\bigr) \cap U_{(\alpha,0)}\bigr)\bigr]\\
&\ge \bigl[\gr(U_{(\alpha,0)}) :
\gr\bigl((U_{(\alpha,0)})_{(g_M)}\bigr)\bigr]\\
&= \bigl[U_{(\alpha,0)} : (U_{(\alpha,0)})_{(g_M)}\bigr]. \qedhere
\end{align*}
\end{proof} 

Given $\alpha\in \Psi_{\red}$ and $(x_\beta)_{\beta< \alpha} \in
\prod_{\substack{\beta\in \Psi_\red\\ \beta < \alpha}} U_\beta$, we consider the
subset
\[
X_{(x_\beta)_{\beta<\alpha}} \coloneqq U_{(\alpha,0)} \cap
\set{z_\alpha\bigl(x_\beta^{g_M}\bigr)_{\beta<\alpha}\cdot \wt
z_\alpha\bigl(x_\alpha^{g_M}\bigr)\cdot x_\alpha^{g_M}}{x_\alpha \in
U_{(\alpha,0)}} \subseteq U_{(\alpha,0)}.
\]
As $(I_{U_P})_{(g)} = I_{U_P}\cap f\bigl(I_{U_P}^{g_M}\bigr)$ it is immediate
from \eqref{eq:f(x)} that
\begin{equation}\label{eq:inequality-4}
(I_{U_P})_{(g)} = \set{\prod_{\alpha\in \Psi_{\red}} y_\alpha = f\Bigl(
\prod_{\alpha\in\Psi_{\red}} x_\alpha^{g_M}\Bigr) \in \prod_{\alpha\in
\Psi_{\red}} U_{(\alpha,0)}}{\begin{array}{l} 
\text{$x_\alpha\in U_{(\alpha,0)}$ and}\\
\text{$y_\alpha\in X_{(x_\beta)_{\beta<\alpha}}$}\\
\text{for all $\alpha\in\Psi_{\red}$}
\end{array}}.
\end{equation}

\begin{lem}\label{lem:inequality-4} 
The set $X_{(x_\beta)_{\beta<\alpha}}$ is either empty or a left coset of
$f_\alpha\bigl(U_{(\alpha,0)}^{g_M}\bigr) \cap U_{(\alpha,0)}$.
\end{lem}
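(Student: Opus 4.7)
The key observation is that the map $f_\alpha$ is a group homomorphism. Indeed, for $x,y\in U_{(\alpha,0)}^{g_M}$ we compute
\[
f_\alpha(xy) = \wt z_\alpha(xy)\cdot xy = \wt z_\alpha(x)\wt z_\alpha(y) xy,
\]
whereas $f_\alpha(x)f_\alpha(y) = \wt z_\alpha(x) x \wt z_\alpha(y) y$. Since $\wt z_\alpha$ takes values in $U_{2\alpha}$, and $U_{2\alpha}$ is central in $U_\alpha$ (which was used already in the proof of Lemma~\ref{lem:aut-RG} and in Lemma~\ref{lem:inequality-3}), the element $\wt z_\alpha(y)$ commutes with $x$, so the two expressions coincide. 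Consequently $A\coloneqq f_\alpha\bigl(U_{(\alpha,0)}^{g_M}\bigr)$ is a subgroup of $U_\alpha$, and therefore $A\cap U_{(\alpha,0)}$ is a subgroup of $U_{(\alpha,0)}$.

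Next I set $z\coloneqq z_\alpha(x_\beta^{g_M})_{\beta<\alpha}\in U_\alpha$. By the very definition of $X_{(x_\beta)_{\beta<\alpha}}$ and of $f_\alpha$ one has
\[
X_{(x_\beta)_{\beta<\alpha}} = U_{(\alpha,0)} \cap z\cdot A.
\]
If this set is empty there is nothing to prove. Otherwise pick any $y_0\in X_{(x_\beta)_{\beta<\alpha}}$ and write $y_0 = z a_0$ with $a_0\in A$. Since $A$ is a subgroup, $z A = y_0 a_0^{-1} A = y_0 A$, so
\[
X_{(x_\beta)_{\beta<\alpha}} = U_{(\alpha,0)} \cap y_0 A = y_0 \cdot \bigl(y_0^{-1}U_{(\alpha,0)} \cap A\bigr) = y_0 \cdot \bigl(U_{(\alpha,0)} \cap A\bigr),
\]
where the last equality uses $y_0 \in U_{(\alpha,0)}$. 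This exhibits $X_{(x_\beta)_{\beta<\alpha}}$ as a left coset of $A\cap U_{(\alpha,0)} = f_\alpha\bigl(U_{(\alpha,0)}^{g_M}\bigr)\cap U_{(\alpha,0)}$, as claimed.

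The only conceptual content is the verification that $f_\alpha$ is a homomorphism; after that, the statement reduces to the standard fact that the intersection of a subgroup with a coset of another subgroup is either empty or a coset of the intersection of the two subgroups. I would expect no real obstacle in carrying out this plan.
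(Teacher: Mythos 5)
Your proof is correct and takes essentially the same route as the paper: the paper's argument shows directly that $X_{(x_\beta)_{\beta<\alpha}}$ is stable under right multiplication by $f_\alpha\bigl(U_{(\alpha,0)}^{g_M}\bigr)\cap U_{(\alpha,0)}$ and that the quotient of any two of its elements lands in that subgroup, which is exactly your observation that $X_{(x_\beta)_{\beta<\alpha}} = U_{(\alpha,0)}\cap zA$ packaged through the standard coset-intersection fact. (Your preliminary check that $f_\alpha$ is a homomorphism is the same centrality-of-$U_{2\alpha}$ point the paper uses implicitly when introducing $f_\alpha$.)
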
 
\begin{proof} 
Clearly, $X_{(x_\beta)_{\beta<\alpha}}$ is stable under right multiplication by
elements of $f_\alpha\bigl(U_{(\alpha,0)}^{g_M}\bigr) \cap U_{(\alpha,0)}$. Now,
take two elements
\[
\gamma_i\coloneqq z_\alpha\bigl(x_\beta^{g_M}\bigr)_{\beta<\alpha} \cdot \wt
z_\alpha(x_i^{g_M}) \cdot x_i^{g_M} \in X_{(x_\beta)_{\beta<\alpha}},\quad
\text{with $x_i \in U_{(\alpha,0)}$, $i=1,2$.}
\]
Then $\gamma_2^{-1}\gamma_1 = f_\alpha\bigl((x_2^{-1}x_1)^{g_M}\bigr) \in
f_\alpha\bigl(U_{(\alpha,0)}^{g_M}\bigr) \cap U_{(\alpha,0)}$ proving the claim.
\end{proof} 

Choose $r\in\Z_{>0}$ big enough so that $U_{(\alpha,r)}$ is contained in
$f_\alpha\bigl(U_{(\alpha,0)}^{g_M}\bigr) \cap U_{(\alpha,0)} \cap
U_{(\alpha,0)}^{g_M}$ for all $\alpha\in \Psi_\red$ and such that the subgroup
$H\coloneqq \prod_{\alpha\in\Psi_{\red}} U_{(\alpha,r)}$ of $I_{U_P}$
is contained in $(I_{U_P})_{(g)} \cap (I_{U_P})_{(g_M)}$. Observe that $H$ is
normal, since the valuation $\varphi_0$ satisfies \cite[(6.2.1), 
(V 3)]{Bruhat-Tits.1972}. 

We are going to apply Lemma~\ref{lem:extra} next with $X$, $Y$, $Z_\alpha$ equal
to $I_{U_P}/H$ and $(I_{U_P})_{(g)}/H$ and $f_\alpha(U_{(\alpha,0)}^{g_M}) \cap
U_{(\alpha,0)}/U_{(\alpha,r)}$, respectively. If $\prod_{\alpha\in
\Psi_{\red}} x_\alpha \in \prod_{\alpha\in\Psi_\red}U_{(\alpha,0)}$ and
$\prod_{\alpha\in \Psi_{\red}} y_\alpha = f(\prod_{\alpha\in\Psi_\red}
x_\alpha^{g_M})$, then each $x_\alpha$, and hence $(x_\beta)_{\beta <\alpha}$,
depends only on $(y_\beta)_{\beta <\alpha}$ (apply the Lemma~\ref{lem:aut-RG}
to $f^{-1}$). Therefore, $X_{(x_\beta)_{\beta < \alpha}}$ depends only on
$(y_\beta)_{\beta <\alpha}$ and, in particular, on
$(y_1,\dotsc,y_{o(\alpha)-1})$ by our choice of $o$.
In fact, $X_{(x_\beta)_{\beta<\alpha}}$ depends only on the cosets $y_\beta
U_{(\beta,r)}$, for $\beta < \alpha$, since we assumed
$U_{(\beta,r)} \subseteq f_\beta\bigl(U_{(\beta,0)}^{g_M}\bigr) \cap
U_{(\beta,0)}$.
Now, \eqref{eq:inequality-4} and Lemma~\ref{lem:inequality-4} show that
condition~\eqref{eq:condition} is satisfied. Lemma~\ref{lem:extra} implies
\begin{equation}\label{eq:inequality-5}
\big\lvert (I_{U_P})_{(g)}/H\big\rvert \le \prod_{\alpha\in \Psi_{\red}}
\big\lvert f_\alpha(U_{(\alpha,0)}^{g_M}) \cap U_{(\alpha,0)} / U_{(\alpha,r)}
\big\rvert.
\end{equation}

Further, using Lemma~\ref{lem:inequality-3} we estimate
\begin{align}\label{eq:inequality-6}
\big\lvert f_\alpha\bigl(U_{(\alpha,0)}^{g_M}\bigr) \cap U_{(\alpha,0)}
/U_{(\alpha,r)}\big\rvert &= \frac{\lvert U_{(\alpha,0)}/U_{(\alpha,r)}\rvert}
{[U_{(\alpha,0)} : f_\alpha\bigl(U_{(\alpha,0)}^{g_M}\bigr) \cap
U_{(\alpha,0)}]}\\
&\le \frac{\lvert U_{(\alpha,0)} /U_{(\alpha,r)}\rvert}{[U_{(\alpha,0)} :
(U_{(\alpha,0)})_{(g_M)}]} = \big\lvert (U_{(\alpha,0)})_{(g_M)} /
U_{(\alpha,r)}\big\rvert. \notag
\end{align}

Putting \eqref{eq:inequality-5} and \eqref{eq:inequality-6} together yields
\[
\big\lvert (I_{U_P})_{(g)}/H\big\rvert \le \prod_{\alpha\in \Psi_\red}
\big\lvert (U_{(\alpha,0)})_{(g_M)}/U_{(\alpha,r)}\big\rvert = \big\lvert
(I_{U_P})_{(g_M)}/H\big\rvert.
\]

Finally, we conclude $\mu_{U_P}(g) = \frac{\lvert I_{U_P}/H\rvert}
{\lvert(I_{U_P})_{(g)}/H\rvert} \ge \frac{\lvert I_{U_P}/H\rvert} {\lvert
(I_{U_P})_{(g_M)}/H\rvert} = \mu_{U_P}(g_M)$, finishing the proof of
Proposition~\ref{prop:inequality}.

\subsection{Properties of \texorpdfstring{$\mu_{U_P}(w)$}{mu(w)}} 
\label{subsec:properties}
\begin{nota} 
As $\mu_{U_P}\colon M\to q^{\Z_{\ge0}}$ is constant on double cosets with
respect to $K_M$ (hence also $I_M$), we obtain
from the Bruhat decomposition~\eqref{eq:Bruhat-Decomposition} an induced function
\begin{equation}\label{eq:mu-WM}
\mu_{U_P}\colon W_M \longrightarrow q^{\Z_{\ge0}}.
\end{equation}
As $K_M$ contains representatives of $W_{0,M}$, it follows that $\mu_{U_P}$ is
constant on the double cosets with respect to $W_{0,M}$. 

The analogous map
$\mu_{U_P}\colon W_M(1)\to q^{\Z_{\ge0}}$ is obtained from \eqref{eq:mu-WM} by
inflation. It is clear that all results in this section are still true if we
replace $W_M$ by $W_M(1)$. 
\end{nota} 
Our goal in this section will be to study the properties of the function
$\mu_{U_P}$.

\begin{lem}\label{lem:mu-prod} 
Let $\lambda \in \Lambda$ and $w_0\in W_{0,M}$. Then
\[
\mu_{U_P}(e^\lambda w_0) = \prod_{\substack{\alpha\in \Sigma^+\setminus \Sigma_M\\
\langle \alpha,\nu(\lambda )\rangle > 0}} \big\lvert U_{(\alpha,0)}/U_{(\alpha, \langle
\alpha,\nu(\lambda )\rangle)}\big\rvert.
\]
\end{lem}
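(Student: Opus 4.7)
The plan is to reduce to the case of a torus element and then compute the index factor by factor using the explicit conjugation action on affine root groups.

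First, I would reduce $w_0$ to the identity. Since $\mu_{U_P}$ on $W_M$ is obtained from $\mu_{U_P}$ on $M$ via the Bruhat decomposition, and since $K_M$ normalizes $I_{U_P}$ (so that $\mu_{U_P}$ is constant on $K_M$-double cosets), and $K_M$ contains representatives of $W_{0,M}$, we have $\mu_{U_P}(e^\lambda w_0) = \mu_{U_P}(e^\lambda)$. Next, choose any lift $z\in Z$ of $e^\lambda$; since the map $\nu$ of \eqref{eq:nu} factors through $\Lambda$, we have $\nu(z) = \nu(\lambda)$.

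Second, I would exploit that $z \in Z$ normalizes every root group $U_\alpha$. By the homeomorphism \eqref{eq:I1-homeo} restricted to $U_P$, we have
\[
I_{U_P} \cong \prod_{\alpha\in \Sigma^+\setminus\Sigma_M} U_{(\alpha,0)}
\]
for any ordering of factors. By \eqref{eq:action-U} applied to $z$, we have $z^{-1}U_{(\alpha,0)}z = U_{(\alpha,\langle\alpha,\nu(\lambda)\rangle)}$. If $u = \prod_\alpha u_\alpha \in I_{U_P}$ with $u_\alpha\in U_{(\alpha,0)}$, then $zuz^{-1} = \prod_\alpha(zu_\alpha z^{-1})$ with each factor still in $U_\alpha$, so by uniqueness of the factorization $u \in z^{-1}I_{U_P}z$ if and only if each $u_\alpha\in U_{(\alpha,\max(0,\langle\alpha,\nu(\lambda)\rangle))}$. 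Therefore
\[
(I_{U_P})_{(z)} \cong \prod_{\alpha\in\Sigma^+\setminus\Sigma_M} U_{(\alpha,\max(0,\langle\alpha,\nu(\lambda)\rangle))}.
\]

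Third, the index splits as a product of indices $[U_{(\alpha,0)} : U_{(\alpha,\max(0,\langle\alpha,\nu(\lambda)\rangle))}]$ over $\alpha\in\Sigma^+\setminus\Sigma_M$. Each such factor equals $1$ when $\langle\alpha,\nu(\lambda)\rangle \le 0$ and equals $\lvert U_{(\alpha,0)}/U_{(\alpha,\langle\alpha,\nu(\lambda)\rangle)}\rvert$ when $\langle\alpha,\nu(\lambda)\rangle > 0$, which yields the claimed formula.

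The only real subtlety is the componentwise factorization of the intersection, which rests on the uniqueness of the product decomposition in \eqref{eq:I1-homeo} together with the fact that $z$ normalizes every $U_\alpha$; everything else is a direct application of \eqref{eq:action-U}. Compared to the work in Section~\ref{subsec:inequality}, the torus case is much easier because there is no mixing between root groups, so no analog of Lemmas~\ref{lem:aut-RG} and~\ref{lem:extra} is needed.
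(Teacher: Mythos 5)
Your proof is correct and follows essentially the same route as the paper's: reduce to a lift $z\in Z$ of $e^\lambda$, use the $z$-equivariant product decomposition $I_{U_P}\cong\prod_{\alpha\in\Sigma^+\setminus\Sigma_M}U_{(\alpha,0)}$ coming from~\eqref{eq:I1-homeo}, conjugate factor by factor via~\eqref{eq:action-U}, and take the index componentwise. You spell out the reduction $\mu_{U_P}(e^\lambda w_0)=\mu_{U_P}(e^\lambda)$ and the uniqueness-of-factorization argument a bit more explicitly than the paper does, but the underlying approach is the same.
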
 
\begin{proof} 
Let $m\in Z$ be a representative of $\lambda \in \Lambda$. Then
the multiplication map induces an $m$-equivariant bijection
$I_{U_P} \cong \prod_{\alpha\in\Sigma^+\setminus\Sigma_M} U_{(\alpha,0)}$, and
we compute
\begin{align*}
(I_{U_P})_{(m)} = I_{U_P}\cap m^{-1}I_{U_P}m &\cong
\prod_{\alpha\in\Sigma^+\setminus\Sigma_M}
U_{(\alpha,0)} \cap m^{-1}U_{(\alpha,0)}m\\
&= \prod_{\substack{\alpha\in \Sigma^+\setminus\Sigma_M\\ \langle
\alpha,\nu(\lambda )\rangle \le 0}} U_{(\alpha,0)} \times \prod_{\substack{\alpha \in
\Sigma^+\setminus\Sigma_M\\ \langle\alpha,\nu(\lambda )\rangle >0}} U_{(\alpha, \langle
\alpha,\nu(\lambda )\rangle)}.
\end{align*}
But then
\[
\mu_{U_P}(e^\lambda w_0) = \mu_{U_P}(e^\lambda ) = [I_{U_P} : (I_{U_P})_{(m)}] = \prod_{\substack{\alpha \in
\Sigma^+\setminus \Sigma_M\\ \langle \alpha,\nu(\lambda )\rangle >0}} \big\lvert
U_{(\alpha,0)}/U_{(\alpha,\langle\alpha,\nu(\lambda )\rangle)}\big\rvert. \qedhere
\]
\end{proof} 

\begin{defn} 
Given $(\alpha,k) \in \Sigma^\aff =\Sigma\times\Z$, we put
(cf.~\eqref{eq:q-hyperplanes})
\[
q(\alpha,k) \coloneqq q(H_{(\alpha,k)}) = \big\lvert
U_{(\alpha,k)}/U_{(\alpha,k+1)}\big\rvert \in
q^{\Z_{>0}}.
\]
Notice that $q(\alpha,k) = q\bigl(w\cdot (\alpha,k)\bigr)$ by
\eqref{eq:action-U}. In particular, $q(\alpha,k) = q(-\alpha,k)$ (take $w =
s_{\alpha}$) and $q(\alpha,k) = q(\alpha,k+\langle \alpha,\nu(\lambda)\rangle)$
(take $w = e^{-\lambda}\in \Lambda$).
\end{defn} 

\subsubsection{The opposite parabolic} 
Let $\alg P^\op$ be the parabolic subgroup opposite $\alg P$ with Levi $\alg M$
and unipotent radical $\alg U_{\alg P^\op}$. Writing $I_{U_{P^\op}} =
I\cap U_{P^\op}$, we define
\[
\mu_{U_{P^\op}}(g) \coloneqq [I_{U_{P^\op}} : (I_{U_{P^\op}})_{(g)}],\qquad
\text{for $g\in P$.}
\]

It is constant on double cosets with respect to $K_M$, hence restricts to a
function $\mu_{U_{P^\op}}\colon W_M\to q^{\Z_{\ge0}}$. The following proposition
explains how $\mu_{U_{P^\op}}$ is related with $\mu_{U_P}$.

\begin{prop}\label{prop:mu-P^op} 
$\mu_{U_{P^\op}}(w) = \mu_{U_P}(w^{-1})$ for all $w\in W_M$.
\end{prop}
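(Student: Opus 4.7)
The plan is to reduce the identity to the case of a pure translation $w = e^\lambda$ with $\lambda\in\Lambda$ and then to match the two explicit product formulas obtained from Lemma~\ref{lem:mu-prod} (and its analog for $P^\op$).

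First, write $w = e^\lambda w_0$ with $\lambda\in\Lambda$ and $w_0\in W_{0,M}$. Then $w^{-1} = w_0^{-1}e^{-\lambda}\in W_{0,M}\cdot e^{-\lambda}$. Since both $\mu_{U_P}$ and $\mu_{U_{P^\op}}$ are constant on $W_{0,M}$-double cosets (as observed for $\mu_{U_P}$ after \eqref{eq:mu-WM}, and by the analogous argument using $K_M$ for $\mu_{U_{P^\op}}$), the proposition reduces to the equality $\mu_{U_{P^\op}}(e^\lambda) = \mu_{U_P}(e^{-\lambda})$.

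Second, apply Lemma~\ref{lem:mu-prod} with $\lambda$ replaced by $-\lambda$ to get
\[
\mu_{U_P}(e^{-\lambda}) = \prod_{\substack{\alpha\in\Sigma^+\setminus\Sigma_M\\ \langle\alpha,\nu(\lambda)\rangle<0}} \big\lvert U_{(\alpha,0)}/U_{(\alpha,-\langle\alpha,\nu(\lambda)\rangle)}\big\rvert = \prod_{\substack{\alpha\in\Sigma^+\setminus\Sigma_M\\ \langle\alpha,\nu(\lambda)\rangle<0}} \prod_{k=0}^{-\langle\alpha,\nu(\lambda)\rangle-1} q(\alpha,k).
\]
Rerun the proof of Lemma~\ref{lem:mu-prod} for $P^\op$: the Iwahori decomposition \eqref{eq:I1-homeo} yields $I_{U_{P^\op}} \cong \prod_{\beta\in\Sigma^+\setminus\Sigma_M} U_{(-\beta,1)}$, and for $m\in Z$ representing $\lambda$ the equation \eqref{eq:U-action} gives $m^{-1} U_{(-\beta,1)} m = U_{(-\beta,1-\langle\beta,\nu(\lambda)\rangle)}$. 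Intersecting, only the roots with $\langle\beta,\nu(\lambda)\rangle<0$ contribute, and we obtain
\[
\mu_{U_{P^\op}}(e^\lambda) = \prod_{\substack{\beta\in\Sigma^+\setminus\Sigma_M\\ \langle\beta,\nu(\lambda)\rangle<0}} \big\lvert U_{(-\beta,1)}/U_{(-\beta,1-\langle\beta,\nu(\lambda)\rangle)}\big\rvert = \prod_{\substack{\beta\in\Sigma^+\setminus\Sigma_M\\ \langle\beta,\nu(\lambda)\rangle<0}} \prod_{k=1}^{-\langle\beta,\nu(\lambda)\rangle} q(-\beta,k).
\]

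Third, compare the two expressions factor by factor using the symmetries $q(-\beta,k)=q(\beta,k)$ and $q(\beta,k)=q(\beta,k+\langle\beta,\nu(\mu)\rangle)$ for $\mu\in\Lambda$ (both recorded in the definition following Lemma~\ref{lem:mu-prod}). Setting $N\coloneqq -\langle\beta,\nu(\lambda)\rangle>0$, the desired matching amounts to
\[
\prod_{k=0}^{N-1} q(\beta,k) = \prod_{k=1}^{N} q(\beta,k),
\]
equivalently $q(\beta,0) = q(\beta,N)$; this follows from the periodicity with $\mu = -\lambda$, giving $q(\beta,0)=q(\beta,0+\langle\beta,\nu(-\lambda)\rangle)=q(\beta,N)$.

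The main obstacle is the bookkeeping of the asymmetric index shift inherent in the Iwahori: $I_{U_P}$ sits at filtration level $0$ on positive roots, whereas $I_{U_{P^\op}}$ sits at level $1$ on negative roots, so the two product ranges differ by a cyclic shift. The crucial input that bridges this gap is the $\Lambda$-periodicity $q(\beta,k)=q(\beta,k+\langle\beta,\nu(\mu)\rangle)$, which exactly compensates the extra boundary term.
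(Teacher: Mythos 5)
Your proof is correct and follows the same route as the paper's own: reduce to $w=e^\lambda$, expand both sides via Lemma~\ref{lem:mu-prod} (and its $P^\op$-analogue obtained from \eqref{eq:I1-homeo} and \eqref{eq:U-action}), and reconcile the shifted product ranges $\prod_{k=0}^{N-1}$ versus $\prod_{k=1}^{N}$ using $q(-\beta,k)=q(\beta,k)$ together with the $\Lambda$-periodicity $q(\beta,0)=q(\beta,N)$. Your closing remark correctly identifies the index shift coming from $I_{U_{P^\op}}$ sitting at filtration level $1$ on negative roots as the one genuine subtlety, and the periodicity as the exact bridge.
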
 
\begin{proof} 
We may assume $w = e^\lambda\in \Lambda$. Lemma~\ref{lem:mu-prod} implies
\[
\mu_{U_P}(-\lambda) = \prod_{\substack{\alpha\in\Sigma^+\setminus \Sigma_M\\
\langle\alpha, \nu(-\lambda)\rangle>0}} \prod_{k=0}^{\langle
\alpha,\nu(-\lambda)\rangle - 1} q(\alpha,k).
\]
A similar argument shows
\[
\mu_{U_{P^\op}}(\lambda) = 
\prod_{\substack{\alpha\in (-\Sigma^+)\setminus\Sigma_M\\ \langle
\alpha,\nu(\lambda)\rangle >0}}
\big\lvert U_{(\alpha,1)}/U_{(\alpha,\langle\alpha,\nu(\lambda)\rangle +1)}
\big\rvert = 
\prod_{\substack{\alpha\in (-\Sigma^+)\setminus\Sigma_M\\ \langle
\alpha,\nu(\lambda)\rangle >0}}
\prod_{k=1}^{\langle\alpha,\nu(\lambda)\rangle} q(\alpha,k).
\]
Using $\langle -\alpha,\nu(-\lambda)\rangle = \langle
\alpha,\nu(\lambda)\rangle$, $q(-\alpha,k) = q(\alpha,k)$, and $q(\alpha,0) =
q(\alpha,\langle\alpha,\nu(\lambda)\rangle)$, we obtain $\mu_{U_P}(-\lambda) =
\mu_{U_{P^\op}}(\lambda)$.
\end{proof} 

\subsubsection{Changing the parabolic subgroup} 
Let $\alg Q = \alg L\alg U_{\alg Q}$ be a parabolic subgroup of $\alg G$ with
$\alg M\subseteq \alg L$. Then $\alg P\cap \alg L$ is a parabolic subgroup of
$\alg L$ with Levi $\alg M$ and unipotent radical $\alg U_{\alg P}\cap \alg L$.

\begin{prop}\label{prop:changing} 
$\mu_{U_P}(w) = \mu_{U_P\cap L}(w)\cdot \mu_{U_Q}(w)$ for all $w\in W_M$.
\end{prop}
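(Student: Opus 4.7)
The plan is to reduce to the case $w = e^\lambda$ with $\lambda \in \Lambda$, then apply Lemma~\ref{lem:mu-prod} three times and split a single product into two disjoint pieces.

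First, I would observe that since $\alg M \subseteq \alg L$, we have $W_{0,M} \subseteq W_{0,L}$, so all three functions $\mu_{U_P}$, $\mu_{U_{P\cap L}}$, and $\mu_{U_Q}$ are constant on $W_{0,M}$-double cosets in $W_M$ (for $\mu_{U_P}$ and $\mu_{U_Q}$ this is the setup of \S\ref{subsec:properties}, and for $\mu_{U_{P\cap L}}$ it follows from the same argument applied inside $\alg L$). Using the semidirect product $W_M = \Lambda \rtimes W_{0,M}$, it therefore suffices to verify the identity on elements of the form $w = e^\lambda$.

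Next, I would apply Lemma~\ref{lem:mu-prod} three times, once in $\alg G$ for $\alg P$, once in $\alg L$ for $\alg P\cap \alg L$ (whose Levi is $\alg M$ and whose unipotent radical has reduced roots $\Sigma_L^+\setminus\Sigma_M$), and once in $\alg G$ for $\alg Q$. The first yields
\[
\mu_{U_P}(e^\lambda) = \prod_{\substack{\alpha\in \Sigma^+\setminus\Sigma_M\\ \langle\alpha,\nu(\lambda)\rangle>0}} \big\lvert U_{(\alpha,0)}/U_{(\alpha,\langle\alpha,\nu(\lambda)\rangle)}\big\rvert,
\]
and the other two give the analogous products indexed by $\Sigma_L^+\setminus\Sigma_M$ and $\Sigma^+\setminus\Sigma_L$, respectively.

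Finally, since $\alg M\subseteq \alg L$ are both standard, $\Sigma_M\subseteq \Sigma_L$ and $\Sigma_L^+ = \Sigma^+\cap \Sigma_L$, so we get a disjoint decomposition
\[
\Sigma^+\setminus\Sigma_M = (\Sigma_L^+\setminus\Sigma_M)\sqcup (\Sigma^+\setminus\Sigma_L).
\]
Splitting the product defining $\mu_{U_P}(e^\lambda)$ along this partition and recognizing the two factors as $\mu_{U_{P\cap L}}(e^\lambda)$ and $\mu_{U_Q}(e^\lambda)$ yields the claim. The only point that requires care, rather than being a genuine obstacle, is the legitimacy of applying Lemma~\ref{lem:mu-prod} inside $\alg L$: one needs $I\cap L$ to play the role of the Iwahori of $L$ so that $I_{U_{P\cap L}} = (I\cap L)\cap U_{P\cap L}$ coincides with $I\cap U_{P\cap L}$, which is standard and follows from the compatibility of parahoric subgroups with Levi subgroups recalled in \S\ref{sec:notations}.
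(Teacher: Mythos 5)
Your proof is correct and coincides with the paper's: both reduce to $w = e^\lambda$, apply Lemma~\ref{lem:mu-prod}, and split the product over $\Sigma^+\setminus\Sigma_M = (\Sigma_L^+\setminus\Sigma_M)\sqcup(\Sigma^+\setminus\Sigma_L)$. The paper states the reduction and the applicability of Lemma~\ref{lem:mu-prod} inside $L$ without comment; your extra justifications are harmless and accurate.
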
 
\begin{proof} 
It suffices to prove the assertion for $w\in \Lambda$. Using
Lemma~\ref{lem:mu-prod} we obtain
\begin{align*}
\mu_{U_P}(w) &= \prod_{\substack{\alpha\in \Sigma^+\setminus\Sigma_M\\
\langle\alpha,\nu(w)\rangle>0}} \big\lvert U_{(\alpha,0)}/U_{(\alpha,\langle
\alpha,\nu(w)\rangle)}\big\rvert\\
&= \prod_{\substack{\alpha\in \Sigma_L^+\setminus \Sigma_M\\ \langle\alpha,
\nu(w)\rangle>0}} \big\lvert U_{(\alpha,0)}/U_{(\alpha,\langle\alpha,
\nu(w)\rangle)}\big\rvert\cdot
\prod_{\substack{\alpha\in \Sigma^+\setminus\Sigma_L\\ \langle\alpha,\nu(w)
\rangle>0}} \big\lvert U_{(\alpha,0)}/U_{(\alpha,\langle\alpha,\nu(w)\rangle)}
\big\rvert\\
&= \mu_{U_P\cap L}(w)\cdot \mu_{U_Q}(w).\qedhere
\end{align*}
\end{proof} 

\subsubsection{Relating \texorpdfstring{$\mu_{U_P}$}{mu} with the 
structure of \texorpdfstring{$W_M$}{WM}}

The fact that $\mu_{U_P}$ can be defined on $W_M$ may seem rather coincidental.
There is, however, a strong relationship between the two. To begin with, the
function
\begin{equation}\label{eq:deltaM}
\delta_M\colon M \longrightarrow \Q^\times,\qquad m\longmapsto
\mu_{U_P}(m)/\mu_{U_P}(m^{-1})
\end{equation}
is easily seen to be a group homomorphism by showing $\delta_M(m) =
[mI_{U_P}m^{-1}: I_{U_P}]$ (generalized index, see \cite[I.2.7]{Vigneras.1996}
for various properties), and this clearly induces a group homomorphism
$\delta_M\colon W_M\to \Q^\times$. One can say more:
\begin{lem}\label{lem:deltaM} 
$\delta_M$ is trivial on $W^\aff$, hence factors through a character
$\Omega_M\to \Q^\times$.
\end{lem}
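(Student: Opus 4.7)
The plan is to exploit the semidirect product decomposition $W_M^{\aff} = W_{0,M} \ltimes (W_M^{\aff} \cap \Lambda)$ and verify triviality of $\delta_M$ on each factor. For the finite-Weyl-group factor, since $K_M$ contains representatives of $W_{0,M}$ and normalizes $I_{U_P}$, the function $\mu_{U_P}$ is identically $1$ on $W_{0,M}$, so $\delta_M|_{W_{0,M}} \equiv 1$.

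For the translation factor, the analogue of \cite[Proposition~(6.2.19)]{Bruhat-Tits.1972} applied to $M$ implies that $W_M^\aff \cap \Lambda$ is generated by the translations $e^{r\alpha^\vee}$ with $\alpha \in \Phi_{M,\red}$ and $r \in \Gamma_\alpha$. By multiplicativity of $\delta_M$, it suffices to show $\delta_M(e^{r\alpha^\vee}) = 1$ for each such pair. Setting $n_\beta \coloneqq \langle \beta, r\alpha^\vee\rangle$, Lemma~\ref{lem:mu-prod} reduces this to the identity
\[
\prod_{\substack{\beta \in \Sigma^+\setminus\Sigma_M\\ n_\beta > 0}} \prod_{k=0}^{n_\beta - 1} q(\beta, k) = \prod_{\substack{\beta \in \Sigma^+\setminus\Sigma_M\\ n_\beta < 0}} \prod_{k=0}^{-n_\beta - 1} q(\beta, k).
\]

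The cancellation will be driven by the reflection $s_\alpha \in W_{0,M}$, which permutes $\Sigma^+ \setminus \Sigma_M$ (a standard fact for standard Levis: any $w \in W_{0,M}$ sends a root $\beta = \sum c_i\alpha_i$ to $\beta$ plus a $\Z$-linear combination of roots in $\Sigma_M$, preserving the positive non-$\Sigma_M$ coefficients). The orbits of $s_\alpha$ on $\Sigma^+\setminus\Sigma_M$ are either singletons, for which $n_\beta = 0$ and which contribute trivially to both sides, or genuine $2$-orbits $\{\beta, s_\alpha(\beta)\}$ for which one computes $n_{s_\alpha(\beta)} = -n_\beta$; thus one element of such an orbit contributes to the left side and the other to the right, over the same range of $k$, and the corresponding $q$-factors agree by the $W$-invariance $q(s_\alpha(\beta), k) = q(\beta, k)$ from \eqref{eq:action-U}. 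The main delicate point is precisely this last identity: it requires choosing a lift of $s_\alpha$ in $N$ that fixes $\varphi_0$, so that its action on affine roots sends $(\beta, k) \mapsto (s_\alpha(\beta), k)$ without any shift in $k$. Such a lift exists because $\varphi_0$ is special, i.e., $W_{\varphi_0} = W_0$. Once this is verified, all factors cancel pairwise and $\delta_M(e^{r\alpha^\vee}) = 1$, completing the proof.
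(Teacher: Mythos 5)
Your proof is correct, but it takes a considerably more laborious route than the paper, which uses a slick general principle. The paper's argument is simply: $W_M^\aff$ is generated by the reflections $s\in S(\hyperplanes_M)$, each of which is an involution; since $\delta_M$ is a group homomorphism into $\Q^\times$ taking only \emph{positive} values (because $\mu_{U_P}$ takes values in $q^{\Z_{\ge 0}}$), one has $\delta_M(s)^2 = \delta_M(s^2) = 1$, hence $\delta_M(s) = 1$, and triviality on $W_M^\aff$ follows. Three lines, no decomposition, no root-by-root computation.

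Your approach instead uses the semidirect product $W_M^\aff = W_{0,M} \ltimes (W_M^\aff \cap V_M)$, dispatches the finite part via $K_M$-invariance of $\mu_{U_P}$, and then proves the translation part by an explicit pairing of roots $\beta \leftrightarrow s_\alpha(\beta)$ in $\Sigma^+\setminus\Sigma_M$, together with the $W_0$-invariance of the $q$-factors (correctly noting that the lift of $s_\alpha$ must fix $\varphi_0$ so that no shift in $k$ occurs). All the steps check out: $s_\alpha$ does permute $\Sigma^+\setminus\Sigma_M$, the fixed points have $n_\beta = 0$, and $n_{s_\alpha(\beta)} = -n_\beta$. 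What your argument buys is an explicit, constructive picture of the cancellation in terms of $q$-factors — essentially re-deriving the relevant case of the invariance hidden inside the paper's appeal to the involution trick — at the cost of significantly more bookkeeping. For this particular statement the involution argument is strictly shorter and also immediately yields the more general fact that \emph{any} homomorphism from a group generated by involutions to a torsion-free abelian group is trivial.

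One small caution: the generators of $W_M^\aff\cap V_M$ cited from the analogue of Bruhat--Tits (6.2.19) are translations in $\Aut\apartment_M$, while $\delta_M$ is defined on $W_M$; you are implicitly using that $\mu_{U_P}(\lambda)$ depends only on $\nu(\lambda)$ (which is indeed clear from Lemma~\ref{lem:mu-prod}), so the passage between $\Lambda$ and translation vectors is harmless — but it deserves a sentence if you were to write this up formally.
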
 
\begin{proof} 
Given $s\in S^\aff$, we have $s = s^{-1}$. As $\delta_M$ is a group homomorphism
taking only positive values, this implies $\delta_M(s) = 1$. As $W^\aff$ is
generated by $S^\aff$, the assertion follows.
\end{proof} 

Thus, $\mu_{U_P}$ carries (at least some) information about the group structure
of $W_M$. Our main result in this section shows that $\mu_{U_P}$ measures the
deviation between the length functions $\ell$ on $W$ and $\ell_M$ on $W_M$, and
that it is monotone with respect to the Bruhat order $\le_M$ on $W_M$. 

\begin{prop}\label{prop:mu-prop} 
Let $v,w\in W_M$. Then:
\begin{enumerate}[label=(\alph*)]
\item\label{prop:mu-prop-a} $q_w = \mu_{U_P}(w)\mu_{U_P}(w^{-1})\cdot q_{M,w}$;
\item\label{prop:mu-prop-b} $\mu_{U_P}(v) \le \mu_{U_P}(w)$ whenever $v\le_M w$.
\end{enumerate}
\end{prop}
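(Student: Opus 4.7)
The plan is to treat the two parts in sequence, using the hyperplane formula \eqref{eq:qw-hyperplanes} for (a) and Bruhat-cover induction for (b).

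\textbf{For part (a),} I would interpret both sides via $q_w = \prod_{H\in \hyperplanes_w} q(H)$ and partition $\hyperplanes_w = (\hyperplanes_w \cap \hyperplanes_M) \sqcup (\hyperplanes_w \setminus \hyperplanes_M)$. The first piece contributes $q_{M,w}$: each $H \in \hyperplanes_M$ has the form $p_M^{-1}(H')$ for a unique hyperplane $H'$ of $\apartment_M$, and since $p_M$ is $W_M$-equivariant with $p_M(\chamber) \subseteq \chamber_M$, $H$ separates $\chamber$ from $w\chamber$ iff $H'$ separates $\chamber_M$ from $w\chamber_M$; the $q$-values agree because the root subgroup $U_\alpha$ (for $\alpha \in \Sigma_M$) is common to $G$ and $M$. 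For the second piece, write $w = e^\lambda w_0$ with $w_0 \in W_{0,M}$. Since $W_{0,M}$ permutes $\Sigma^+\setminus \Sigma_M$ and preserves $q$, I would argue root-by-root: for $\alpha \in (\Sigma\setminus\Sigma_M)^+$, the separating hyperplanes $H_{(\alpha, k)}$ form a set of size $\lvert m\rvert$ with $m = \langle \alpha, \nu(\lambda)\rangle$, and the periodicity $q(\alpha, k) = q(\alpha, k+m)$ reindexes the product as $\prod_{k=0}^{\lvert m\rvert-1} q(\alpha, k)$. Summing over $m > 0$ recovers $\mu_{U_P}(w)$ by Lemma~\ref{lem:mu-prod}, and over $m < 0$ recovers $\mu_{U_P}(w^{-1})$ by Proposition~\ref{prop:mu-P^op}.

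\textbf{For part (b),} I would induct on $\ell_M(w) - \ell_M(v)$, reducing to a Bruhat cover $v \lessdot_M w$ where $w = vt$ for a reflection $t \in W_M$ in some $H_t \in \hyperplanes_M$ with $\ell_M(w) = \ell_M(v) + 1$. Writing $v = e^\mu v_0$ and $t = s_\alpha e^{k\alpha^\vee}$ with $\alpha \in \Sigma_M$, a direct computation gives $w = e^{\mu - k v_0(\alpha^\vee)} s_{v_0(\alpha)} v_0$, so the translation parts differ by $\nu(w) - \nu(v) = -k v_0(\alpha^\vee)$ with $v_0(\alpha^\vee) \in \Sigma_M^\vee$. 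Using Lemma~\ref{lem:mu-prod} to express both $\mu_{U_P}(v)$ and $\mu_{U_P}(w)$ as products over indexed pairs $(\beta, j)$ with $\beta \in (\Sigma \setminus \Sigma_M)^+$ and $0 \le j < \max\{\langle \beta, \nu(\cdot)\rangle, 0\}$, the claim reduces to an inclusion of these index sets with matching $q$-values.

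\textbf{The hardest step} will be verifying this inclusion. The shift $-k v_0(\alpha^\vee)$ can in principle both increase and decrease the pairings $\langle \beta, \nu(\cdot)\rangle$ for $\beta \in \Sigma \setminus \Sigma_M$, depending on the sign of $\langle \beta, v_0(\alpha^\vee)\rangle$. The length-increasing condition $\ell_M(w) = \ell_M(v)+1$, interpreted via hyperplane counts in $\apartment_M$, should force the net effect on the $(\beta, j)$-index set to be a monotone enlargement. Correctly encoding this monotonicity---i.e.\ translating a Bruhat cover in $W_M$ into the required inclusion of affine-root data indexing the $\mu_{U_P}$-products---is the crux of the proof, and one may want to combine it with Lemma~\ref{lem:deltaM} (ensuring $\delta_M$ is trivial on $W^\aff$, hence $\mu_{U_P}(w) = \mu_{U_P}(w^{-1})$ along the induction) to reformulate part (b) as divisibility of $q_w/q_{M,w}$, which can then be attacked via the hyperplane-subset characterization of the Bruhat order.
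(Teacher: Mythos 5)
Your part (a) is essentially the paper's argument: decompose $\hyperplanes_w$ into the $M$-part (giving $q_{M,w}$) and the remaining hyperplanes $H_{(\alpha,k)}$ with $\alpha\in\Sigma^+\setminus\Sigma_M$, split the latter by the sign of $\langle\alpha,\nu(\lambda)\rangle$, and match these with $\mu_{U_P}(w)$ and $\mu_{U_P}(w^{-1})$ via Lemma~\ref{lem:mu-prod}. This is fine.

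Part (b) has a genuine gap. You correctly reduce to a cover $v\lessdot_M w=tv$ with $t\in S(\hyperplanes_M)$ and $\ell_M(w)=\ell_M(v)+1$, and you correctly sense that $\delta_M$ being trivial on $W^\aff$ lets one trade the desired statement for a statement about $q_w/q_{M,w}$. But two things go wrong. First, the parenthetical claim that Lemma~\ref{lem:deltaM} gives $\mu_{U_P}(w)=\mu_{U_P}(w^{-1})$ ``along the induction'' is false: $\delta_M$ need not be trivial on $w\in W_M$, only on $W^\aff$. What is actually used is $\delta_M(tw)=\delta_M(w)$ since $t\in W^\aff_M$, which yields $\mu_{U_P}(v)^2/\mu_{U_P}(w)^2 = \mu_{U_P}(v)\mu_{U_P}(v^{-1})/\mu_{U_P}(w)\mu_{U_P}(w^{-1})$ and thereby reduces the target to $\mu_{U_P}(v)\mu_{U_P}(v^{-1})\le\mu_{U_P}(w)\mu_{U_P}(w^{-1})$, i.e.\ by (a) to $q_v\cdot q_{M,v}^{-1}$ dividing $q_w\cdot q_{M,w}^{-1}$. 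Second, there is no ``hyperplane-subset characterization of the Bruhat order'': the inclusion $\hyperplanes_v\subseteq\hyperplanes_w$ characterizes the weak order, not the Bruhat order (already in $S_3$ one has $s_1\le s_1s_2$ in Bruhat order while $\hyperplanes_{s_1}\not\subseteq\hyperplanes_{s_1s_2}$). Likewise, the inclusion of index sets you propose to check root-by-root need not hold, so the ``direct computation'' route does not close. What is actually needed, and what the paper does, is a gallery argument: take a minimal gallery $\Gamma_w$ from $\chamber$ to $w\chamber$ crossing $(H_1,\dotsc,H_{\ell(w)})$ with $H_t=H_i$ for some $i$, fold it along $H_t$ to get a (not necessarily minimal) gallery $\Gamma_{tw}$ of length $\ell(w)-1$ crossing $(H_1,\dotsc,H_{i-1},tH_{i+1},\dotsc,tH_{\ell(w)})$. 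Since $\Gamma_{tw}$ crosses every hyperplane in $\hyperplanes_{tw}$ at least once, one gets $q_{tw}\le q_w/q(H_t)$ as a divisibility of products, not of sets. Combined with $q_{M,w}=q_{M,tw}\cdot q(H_t)$ (from $\ell_M(w)=\ell_M(tw)+1$) and part (a), this gives the required inequality, and then the $\delta_M$-invariance lets you take square roots. You should replace the proposed index-set inclusion and the nonexistent Bruhat characterization with this gallery folding step.
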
 

Before we give the proof, we deduce a simple corollary:
\begin{cor}\label{cor:mu-prop} 
$\mu_{U_P}(vw)\le \mu_{U_P}(v)\mu_{U_P}(w)$ and $q_{v,w}
= \frac{\mu_{U_P}(v)\mu_{U_P}(w)}{\mu_{U_P}(vw)}\cdot q_{M,v,w}$ for all $v,w\in
W_M$.
\end{cor}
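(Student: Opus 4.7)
The plan is to deduce both assertions by combining Proposition~\ref{prop:mu-prop}\ref{prop:mu-prop-a} with the defining relation $q_vq_w = q_{vw}q_{v,w}^2$ (and its analogue in $W_M$). First, I would apply part~\ref{prop:mu-prop-a} to each of $q_v$, $q_w$, $q_{vw}$, plug the three expressions into $q_vq_w = q_{vw}q_{v,w}^2$, and then substitute $q_{M,v}q_{M,w} = q_{M,vw}q_{M,v,w}^2$. After cancelling the common factor $q_{M,vw}$, the identity becomes
\[
\mu_{U_P}(v)\mu_{U_P}(v^{-1})\mu_{U_P}(w)\mu_{U_P}(w^{-1}) \cdot q_{M,v,w}^2 = \mu_{U_P}(vw)\mu_{U_P}((vw)^{-1}) \cdot q_{v,w}^2.
\]

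Next, I would use Lemma~\ref{lem:deltaM}, which says $\delta_M = \mu_{U_P}(\cdot)/\mu_{U_P}((\cdot)^{-1})$ is a group homomorphism on $W_M$; applied to $vw$, it gives
\[
\mu_{U_P}(v^{-1})\mu_{U_P}(w^{-1})\cdot \mu_{U_P}(vw) = \mu_{U_P}(v)\mu_{U_P}(w)\cdot \mu_{U_P}((vw)^{-1}).
\]
Substituting this into the previous display eliminates the ``inverse'' factors and turns both sides into perfect squares, yielding
\[
\bigl[\mu_{U_P}(v)\mu_{U_P}(w)\cdot q_{M,v,w}\bigr]^2 = \bigl[\mu_{U_P}(vw)\cdot q_{v,w}\bigr]^2.
\]
Since every quantity is a positive integer, extracting the positive square root produces exactly the stated identity
\[
\mu_{U_P}(vw)\cdot q_{v,w} = \mu_{U_P}(v)\mu_{U_P}(w)\cdot q_{M,v,w}.
\]

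Finally, for the submultiplicativity $\mu_{U_P}(vw) \le \mu_{U_P}(v)\mu_{U_P}(w)$, I would argue that everything in sight lies in $q^{\Z_{\ge 0}}$, so by the identity just proved, it is enough to show that $q_{M,v,w}$ divides $q_{v,w}$. Using the hyperplane formula~\eqref{eq:qvw-hyperplanes}, this reduces to the inclusion $\hyperplanes_{M,v}\cap v\hyperplanes_{M,w} \subseteq \hyperplanes_v\cap v\hyperplanes_w$ under the natural embedding $\hyperplanes_M \hookrightarrow \hyperplanes$, together with the compatibility $q_M(H) = q(H)$ on $\hyperplanes_M$ (both root groups agreeing). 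The inclusion itself is geometric: since $p_M(\chamber) \subseteq \chamber_M$ and $p_M(v\chamber) = vp_M(\chamber) \subseteq v\chamber_M$ by $N_M$-equivariance of $p_M$, any $H\in \hyperplanes_M$ separating $\chamber_M$ from $v\chamber_M$ forces its preimage $p_M^{-1}(H)\in \hyperplanes$ to separate $\chamber$ from $v\chamber$.

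The main obstacle is this last geometric step, because of the subtle point (emphasised in the remark closing Section~\ref{sec:notations}) that $p_M(\chamber)$ need not coincide with $\chamber_M$, so the hyperplane inclusion has to be justified via the strict containments rather than an equality of chambers. Once that is handled, all remaining work is purely algebraic manipulation of the identity in Proposition~\ref{prop:mu-prop}\ref{prop:mu-prop-a} and of Lemma~\ref{lem:deltaM}.
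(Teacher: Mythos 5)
Your proposal is correct and follows essentially the same route as the paper's proof: apply Proposition~\ref{prop:mu-prop}.\ref{prop:mu-prop-a} to $q_v,q_w,q_{vw}$, combine with $q_vq_w=q_{vw}q_{v,w}^2$ and its $M$-analogue, use the homomorphism property of $\delta_M$ (Lemma~\ref{lem:deltaM}) to eliminate the inverse factors, take the positive square root, and then deduce submultiplicativity from the divisibility $q_{M,v,w}\mid q_{v,w}$ via the hyperplane formula~\eqref{eq:qvw-hyperplanes}. The only place you go beyond the paper is in spelling out \emph{why} $\hyperplanes_{M,v}\cap v\hyperplanes_{M,w}\subseteq \hyperplanes_v\cap v\hyperplanes_w$: the paper merely asserts the identity $\hyperplanes_{M,w}=\hyperplanes_w\cap\hyperplanes_M$, whereas you correctly derive the needed inclusion from $p_M(\chamber)\subseteq\chamber_M$, $N_M$-equivariance of $p_M$, and the compatibility $q_M(H)=q(H)$ for $H\in\hyperplanes_M$; this is a sound and slightly more careful justification of the same geometric fact, and the ``obstacle'' you flag is already dispatched by your own argument.
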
 
\begin{proof} 
As $\delta_M\colon W_M\to \Q^\times$ is a group homomorphism,
we have $\frac{\mu_{U_P}(v)\mu_{U_P}(w)}{\mu_{U_P}(vw)} =
\frac{\mu_{U_P}(v^{-1})\mu_{U_P}(w^{-1})}{\mu_{U_P}(w^{-1}v^{-1})}$. Hence,
Proposition~\ref{prop:mu-prop}.\ref{prop:mu-prop-a} implies
\begin{align*}
q_{v,w}^2 &= \frac{q_vq_w}{q_{vw}} = \frac{\mu_{U_P}(v)\mu_{U_P}(v^{-1})\cdot
\mu_{U_P}(w)\mu_{U_P}(w^{-1})}{\mu_{U_P}(vw)\mu_{U_P}(w^{-1}v^{-1})}\cdot
\frac{q_{M,v}\cdot q_{M,w}}{q_{M,vw}}\\
&= \left(\frac{\mu_{U_P}(v)\cdot \mu_{U_P}(w)}{\mu_{U_P}(vw)}\cdot
q_{M,v,w}\right)^2.
\end{align*}
Taking the (positive) square root implies the second assertion. Since
$\hyperplanes_{M,w} = \hyperplanes_w\cap \hyperplanes_{M}$ for all $w\in W$
(see~\eqref{eq:qw-hyperplanes} for the definition of $\hyperplanes_w$), we
have $\hyperplanes_{M,v}\cap v\hyperplanes_{M,w} \subseteq \hyperplanes_v \cap
v\hyperplanes_w$. Therefore $q_{M,v,w}$ divides $q_{v,w}$, and the first
assertion follows from the second.
\end{proof} 

\medskip

\begin{proof}[Proof of Proposition~\ref{prop:mu-prop}] 
\begin{itemize}
\item[\ref{prop:mu-prop-a}] Under the inclusion $\hyperplanes_M \subseteq
\hyperplanes$ we have
\[
\hyperplanes_M = \set{H_{(\alpha,k)} \in \hyperplanes}{(\alpha,k) \in
\Sigma_M^\aff}.
\]
Take $w = e^\lambda w_0$, with $\lambda\in \Lambda$ and $w_0\in W_{0,M}$. 
By \cite[Lemma~5.6 and Proposition~5.9, 2)]{Vigneras.2016} (using
$w_0(\Sigma^+\setminus\Sigma_M) = \Sigma^+\setminus\Sigma_M$) there is a
decomposition
\[
\hyperplanes_w = \hyperplanes_w^+ \sqcup \hyperplanes_w^- \sqcup
\hyperplanes_{M,w},
\]
where
\begin{align*}
\hyperplanes_w^+ &\coloneqq \set{H_{(\alpha,k)}}{\begin{array}{l} \text{$\alpha
\in \Sigma^+\setminus\Sigma_M$ with $\langle \alpha,\nu(\lambda)\rangle > 0$}\\
\text{and $k\in \{-\langle \alpha,\nu(\lambda)\rangle,\dotsc,-1\}$}
\end{array}}\qquad \text{and}\\
\hyperplanes_w^- &\coloneqq \set{H_{(\alpha,k)}}{\begin{array}{l}
\text{$\alpha\in \Sigma^+\setminus\Sigma_M$ with $\langle \alpha,
\nu(\lambda)\rangle <0$}\\
\text{and $k\in \{0,\dotsc,-\langle\alpha,\nu(\lambda)\rangle -1\}$}
\end{array}}.
\end{align*}
Thanks to Lemma~\ref{lem:mu-prod} we compute
\[
\prod_{H\in \hyperplanes_w^-}q(H) = \prod_{\substack{\alpha\in \Sigma^+\setminus
\Sigma_M\\ \langle\alpha,\nu(\lambda)\rangle<0}} \prod_{k=0}^{-\langle
\alpha,\nu(\lambda)\rangle - 1} q(\alpha,k) = \mu_{U_P}(-\lambda) =
\mu_{U_P}(w^{-1}).
\]
Likewise, a short computation reveals $\prod_{H\in \hyperplanes_w^+}q(H) =
\mu_{U_P}(w)$ keeping in mind $q(\alpha,k) = q(\alpha,
k+\langle\alpha,\nu(\lambda)\rangle)$. Taking everything together, the assertion
follows from \eqref{eq:qw-hyperplanes} applied to both $q_w$ and $q_{M,w}$.

\item[\ref{prop:mu-prop-b}] Let $v,w\in W_M$ with $v\le_Mw$. By definition of
the Bruhat order, \eg \cite[Definition~2.1.1]{Bjorner-Brenti.2006}, there
exists a chain $v = v_0, v_1,\dotsc,v_k = w$ in $W_M$ with $v_iv_{i-1}^{-1} \in
S(\hyperplanes_M)$ ($\iff v_{i-1}^{-1}v_i \in S(\hyperplanes_M)$) and
$\ell_M(v_{i-1}) < \ell_M(v_i)$ for all $1\le i\le k$.

We are therefore reduced to the case $w = tv$ and $\ell_M(v) < \ell_M(w)$ for
some $t\in S(\hyperplanes_M)$. In fact, by
\cite[Theorem~2.2.6]{Bjorner-Brenti.2006}, we may even assume $\ell_M(w) =
\ell_M(v) + 1$. 
\medskip

\textit{Step~1:} We show $\mu_{U_P}(tw)\mu_{U_P}\bigl((tw)^{-1}\bigr) \le
\mu_{U_P}(w)\mu_{U_P}(w^{-1})$. 
The hypothesis $\ell_M(tw) = \ell_M(w) - 1$ implies $H_t \in \hyperplanes_{M,w}
\setminus \hyperplanes_{M,tw}$. But since $\hyperplanes_{M,w'} =
\hyperplanes_{w'}\cap \hyperplanes_M$, for all $w'\in W_M$, we also have $H_t\in
\hyperplanes_w \setminus \hyperplanes_{tw}$. Let $\Gamma_w$ be a minimal gallery
in $\apartment$ connecting $\chamber$ and $w\chamber$. Denote $(H_1,H_2,\dotsc,
H_{\ell(w)})$ the sequence of hyperplanes crossed by $\Gamma_w$. Then $H_t =
H_i$ for some $1\le i\le \ell(w)$. By folding $\Gamma_w$ along $H_t$ and
deleting the repeated chamber we obtain a gallery $\Gamma_{tw}$ of length
$\ell(w) - 1$ in $\apartment$ connecting $\chamber$ and $tw\chamber$, given by
crossing the hyperplanes
\[
(H_1,\dotsc,H_{i-1}, tH_{i+1},\dotsc,tH_{\ell(w)}).
\]
Of course, $\Gamma_{tw}$ crosses all hyperplanes in $\hyperplanes_{tw}$ at least
once. Applying \ref{prop:mu-prop-a} twice, we compute
\begin{align*}
\mu_{U_P}(w)\mu_{U_P}(w^{-1})q_{M,w} = q_w &= \prod_{j=1}^{\ell(w)} q(H_j)\\
&= \prod_{j=1}^{i-1}q(H_j)\cdot \prod_{j=i+1}^{\ell(w)} q(tH_j)\cdot q(H_t)\\
&\ge q_{tw}\cdot q(H_t)\\
&= \mu_{U_P}(tw)\mu_{U_P}\bigl((tw)^{-1}\bigr)\cdot q_{M,tw}\cdot q(H_t).
\end{align*}
Since $\ell_M(w) = \ell_M(tw)+1$, we have $q_{M,w} = q_{M,tw}\cdot q(H_t)$,
finishing the proof of step~1.
\medskip

\textit{Step~2:} We show $\mu_{U_P}(tw) \le \mu_{U_P}(w)$. By
Lemma~\ref{lem:deltaM} we have $\delta_M(tw) = \delta_M(w)$. Hence,
\begin{align*}
\mu_{U_P}(tw)^2 &= \mu_{U_P}(tw)\mu_{U_P}\bigl((tw)^{-1}\bigr)\cdot
\delta_M(tw)\\
&\le \mu_{U_P}(w)\mu_{U_P}(w^{-1})\cdot \delta_M(w) = \mu_{U_P}(w)^2
\end{align*}
using step~1. Taking square roots, the assertion follows.\qedhere
\end{itemize}
\end{proof} 
\subsection{Positive elements}\label{subsec:positive} 
We collect here some results on positive elements in the Levi subgroup $M$ which
will become important for the definition of parabolic induction. The
results in this section are not new, but we will give new proofs of two results
obtained by Abe in \cite{Abe.2016a}

\begin{defn} 
An element $m\in M$ is called \emph{$M$-positive} (or just \emph{positive}
if no confusion arises) if $\mu_{U_P}(m) = 1$. The monoid
of positive elements is denoted $M^+$ (or even $M^{+,G}$ if we want to stress
that $M$ is considered as a Levi subgroup in $G$). Notice that $K_M\subseteq
M^+$. 

The elements in the monoid $M^- \coloneqq (M^+)^{-1}$ are called
\emph{$M$-negative} (or \emph{negative}).
\end{defn} 

\begin{rmk} 
In view of $\mu_{U_{P^\op}}(m^{-1}) = \mu_{U_P}(m)$
(Proposition~\ref{prop:mu-P^op}), $m$ being positive is equivalent to
\[
mI_{U_P}m^{-1} \subseteq I_{U_P}\quad \text{and}\quad I_{U_{P^\op}} \subseteq
mI_{U_{P^\op}} m^{-1},
\]
where $I_{U_{P^\op}} = I\cap U_{P^\op}$. This recovers the classical definition,
cf. \cite[(6.5)]{Bushnell-Kutzko.1998} or \cite[II.4]{Vigneras.1998}.
\end{rmk} 

\begin{defn} 
\begin{enumerate}[label=(\alph*)]
\item An element $w\in W_M$ (or in $W_M(1)$) is called \emph{$M$-positive} (or
\emph{positive}) if $\mu_{U_P}(w) = 1$. The monoid of positive elements in $W_M$ is
denoted $W_{M^+}$.

The elements in $W_{M^-} \coloneqq (W_{M^+})^{-1}$ are called \emph{$M$-negative} (or
\emph{negative}).

\item An element $\lambda\in \Lambda$ (or in $\Lambda(1)$) is called
\emph{strictly $M$-positive} (or \emph{strictly positive}) if there exists a
central element $a\in M$ with $\lambda = aZ_0$ (or $\lambda = aZ_1$) and if
\[
\langle \alpha, \nu(\lambda)\rangle < 0\quad \text{for all $\alpha \in
\Sigma^+\setminus\Sigma_M$.}
\]
\item A central element $a\in M$ is called \emph{strictly $M$-positive} (or
\emph{strictly positive}) if $aZ_0\in \Lambda$ is.
\end{enumerate}
\end{defn} 

\begin{rmk} 
\begin{enumerate}[label=(\alph*)]
\item If $\lambda\in \Lambda$ is strictly positive, then $\langle\alpha,
\nu(\lambda)\rangle = 0$ for all $\alpha\in \Sigma_M$, because $\lambda = aZ_0$
for some element $a$ in the center of $M$. The strictly positive elements are
contained in $\Lambda_{M^+} \coloneqq \Lambda \cap W_{M^+}$ (\eg by
Lemma~\ref{lem:mu-prod}).

\item An element $\lambda\in \Lambda$ is strictly positive if and only if it
lies in the image of a strongly positive element $a$ in the sense of
\cite[(6.16)]{Bushnell-Kutzko.1998}. In particular,
\cite[(6.14)]{Bushnell-Kutzko.1998}, strictly positive elements exist.
Moreover, given any $m\in M$ we have $a^nm \in M^+$ for $n\gg0$.

\item The Bruhat decomposition of $M$ \eqref{eq:Bruhat-Decomposition} induces
bijections $W_{M^+}\cong I_M\backslash M^+/I_M$ and $W_{M^+}(1) \cong I_{1,M}
\backslash M^+/I_{1,M}$ \cite[Remark~2.11 (2)]{Ollivier-Vigneras.2018}.

\item We have $W_{M^+} \cong \Lambda_{M^+} \rtimes W_{0,M}$ and
\cite[Lemma~2.2]{Vigneras.2015}
\[
\Lambda_{M^+} = \set{\lambda\in \Lambda}{\langle \alpha,\nu(\lambda)\rangle \le
0\quad \text{for all $\alpha\in \Sigma^+\setminus\Sigma_M$}}.
\]
It follows that if $\lambda\in \Lambda$ (or in $\Lambda(1)$) is strictly
positive, then for all $w\in W_M$ (or  in $W_M(1)$) there exists $n\gg0$ with $e^{n\lambda}w\in
W_{M^+}$ (or in $W_{M^+}(1)$).
\end{enumerate}
\end{rmk} 

Finally, we give new proofs of two useful lemmas due to Abe. They will not be
needed in the rest of the paper.

\begin{cor}[{\cite[Lemma~4.5]{Abe.2016a}}] 
\label{cor:Abe-1}
Let either $v,w\in W_{M^+}$ or $v,w \in W_{M^-}$. Then we have $q_{v,w} =
q_{M,v,w}$ and, in particular,
\[
\ell_M(v) + \ell_M(w) - \ell_M(vw) = \ell(v) + \ell(w) - \ell(vw).
\]
\end{cor}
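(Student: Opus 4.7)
My plan is to reduce the statement about $q_{v,w}$ to a direct application of Corollary~\ref{cor:mu-prop}, and then extract the length equality from the hyperplane formula~\eqref{eq:qvw-hyperplanes} for $q_{v,w}$.

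\textbf{Step 1 (positive case).} Suppose $v,w\in W_{M^+}$, so $\mu_{U_P}(v)=\mu_{U_P}(w)=1$ by definition. Corollary~\ref{cor:mu-prop} gives $\mu_{U_P}(vw)\le \mu_{U_P}(v)\mu_{U_P}(w)=1$, hence $\mu_{U_P}(vw)=1$ as well. Feeding this back into the second formula of Corollary~\ref{cor:mu-prop} yields
\[
q_{v,w}=\frac{\mu_{U_P}(v)\mu_{U_P}(w)}{\mu_{U_P}(vw)}\cdot q_{M,v,w}=q_{M,v,w}.
\]

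\textbf{Step 2 (negative case).} If $v,w\in W_{M^-}$, then $w^{-1},v^{-1}\in W_{M^+}$. From $q_a=q_{a^{-1}}$ and the defining identity $q_aq_b=q_{ab}q_{a,b}^2$ one gets the symmetry $q_{v,w}=q_{w^{-1},v^{-1}}$ (both squared are equal to $q_vq_w/q_{vw}$), and similarly $q_{M,v,w}=q_{M,w^{-1},v^{-1}}$. Applying Step~1 to $w^{-1},v^{-1}$ then gives $q_{v,w}=q_{w^{-1},v^{-1}}=q_{M,w^{-1},v^{-1}}=q_{M,v,w}$.

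\textbf{Step 3 (length equality).} Using $\hyperplanes_{M,w'}=\hyperplanes_{w'}\cap \hyperplanes_M$ for $w'\in W_M$, we see that
\[
\hyperplanes_{M,v}\cap v\hyperplanes_{M,w}\subseteq \hyperplanes_v\cap v\hyperplanes_w.
\]
The formula~\eqref{eq:qvw-hyperplanes} together with Steps~1--2 gives
\[
\prod_{H\in \hyperplanes_v\cap v\hyperplanes_w}q(H)=q_{v,w}=q_{M,v,w}=\prod_{H\in \hyperplanes_{M,v}\cap v\hyperplanes_{M,w}}q(H).
\]
Since $q(H)\in q^{\Z_{>0}}$ is strictly greater than $1$ for every $H$, this forces the inclusion above to be an equality of sets. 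Finally, the standard Coxeter-theoretic identity (obtained from folding a minimal gallery along the ``undone'' hyperplanes) $\ell(v)+\ell(w)-\ell(vw)=2\lvert \hyperplanes_v\cap v\hyperplanes_w\rvert$, and the analogous identity for $M$, immediately give $\ell(v)+\ell(w)-\ell(vw)=\ell_M(v)+\ell_M(w)-\ell_M(vw)$.

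There is no real obstacle here: all the heavy lifting has been done in Proposition~\ref{prop:mu-prop} and Corollary~\ref{cor:mu-prop}. The only slightly non-trivial input outside the earlier results is the length-via-hyperplane formula in Step~3, which is a well-known gallery-counting fact that can be justified in one sentence.
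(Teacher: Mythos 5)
Your proof is correct and mostly parallels the paper's. Step~1 (positive case) and Step~3 (length equality from hyperplane counts) are essentially identical to what the paper does, including the use of \eqref{eq:qvw-hyperplanes} and the observation that equality of products of $q(H)>1$ forces equality of the underlying sets of hyperplanes.

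Your Step~2 (negative case), however, takes a genuinely different route. The paper invokes Lemma~\ref{lem:deltaM} (that $\delta_M(m)=\mu_{U_P}(m)/\mu_{U_P}(m^{-1})$ is a group homomorphism) to conclude directly that $\mu_{U_P}(vw)=\mu_{U_P}(v)\mu_{U_P}(w)$ when $v,w\in W_{M^-}$, and then feeds this into Corollary~\ref{cor:mu-prop}. You instead reduce the negative case to the positive case via the symmetry $q_{v,w}=q_{w^{-1},v^{-1}}$, which you derive from $q_a=q_{a^{-1}}$ (itself a standard consequence of $\ell(a)=\ell(a^{-1})$ and the invariance of $q_s$ under conjugation in $S^\aff$, though the paper never states $q_a=q_{a^{-1}}$ explicitly) and the defining relation $q_aq_b=q_{ab}q_{a,b}^2$. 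Your approach avoids the auxiliary $\delta_M$-homomorphism entirely and is arguably more self-contained at the level of the $q$-combinatorics; the paper's approach is slightly shorter given that $\delta_M$ is already available and is reused elsewhere (\eg in the proof of Proposition~\ref{prop:mu-prop}.\ref{prop:mu-prop-b}). Both are valid.
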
 
\begin{proof} 
If $v,w\in W_{M^+}$ then Corollary~\ref{cor:mu-prop} yields $q_{v,w} =
q_{M,v,w}$ (and also $vw\in W_{M^+}$). If, on the other hand, $v,w\in W_{M^-}$
then $v^{-1},w^{-1}, (vw)^{-1}\in W_{M^+}$ and $\mu_{U_P}(vw) =
\mu_{U_P}(v)\cdot \mu_{U_P}(w)$, because \eqref{eq:deltaM} is a group
homomorphism.
Again, Corollary~\ref{cor:mu-prop} implies $q_{v,w} = q_{M,v,w}$.

Notice that $q_{v,w} = q_{M,v,w}$ implies $\hyperplanes_{v}
\cap v\hyperplanes_w = \hyperplanes_{M,v}\cap v\hyperplanes_{M,w}$. Hence, the
second assertion follows from $\ell(vw) = \ell(v) + \ell(w) - 2\cdot \lvert
\hyperplanes_v \cap v\hyperplanes_w\rvert$, \cite[Remark~4.18]{Vigneras.2016},
and a similar formula for $\ell_M(vw)$.
\end{proof} 

\begin{cor}[{\cite[Lemma~4.1]{Abe.2016a}}] 
\label{cor:Abe-2}
Let $w\in W_{M^+}$ (resp. $w\in W_{M^-}$) and $v\in W_M$ with $v\le_Mw$. Then
$v\in W_{M^+}$ (resp. $v\in W_{M^-}$).
\end{cor}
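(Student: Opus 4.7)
The plan is to derive both assertions directly from the monotonicity of $\mu_{U_P}$ with respect to the Bruhat order on $W_M$ established in Proposition~\ref{prop:mu-prop}\ref{prop:mu-prop-b}. Recall that $\mu_{U_P}$ takes values in $q^{\Z_{\ge0}}$, so every value is at least $1$, with equality characterizing $W_{M^+}$.

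For the positive case the argument is immediate: if $w\in W_{M^+}$ and $v\le_M w$, then
$1 \le \mu_{U_P}(v) \le \mu_{U_P}(w) = 1$,
so $\mu_{U_P}(v) = 1$, i.e.\ $v\in W_{M^+}$.

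For the negative case I would reduce to the positive case by inversion. If $w\in W_{M^-}$ then $w^{-1}\in W_{M^+}$; granting that the Bruhat order on $W_M$ is invariant under taking inverses (i.e.\ $v\le_M w$ iff $v^{-1}\le_M w^{-1}$), applying the positive case to the relation $v^{-1}\le_M w^{-1}$ yields $v^{-1}\in W_{M^+}$, whence $v\in W_{M^-}$.

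The only non-trivial input is the inversion-invariance of the Bruhat order, which is where I would spend any real effort. On the Coxeter system $(W_M^{\aff}, S_M^{\aff})$ this is the classical fact that $v\le w\iff v^{-1}\le w^{-1}$, a direct consequence of the subword characterization of the Bruhat order. This extends painlessly to $W_M = W_M^{\aff}\rtimes \Omega_M$: using the definition $wu \le w'u'\iff w\le w'\text{ and }u=u'$ together with the fact that conjugation by elements of $\Omega_M$ is an automorphism of the Coxeter system $(W_M^{\aff}, S_M^{\aff})$ (and so preserves the Bruhat order on $W_M^{\aff}$), one checks the equivalence directly. With this standard fact in hand the corollary follows in a single line from Proposition~\ref{prop:mu-prop}\ref{prop:mu-prop-b}, so I do not anticipate any genuine obstacle.
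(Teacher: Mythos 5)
Your proof is correct and follows essentially the same route as the paper: deduce the positive case directly from Proposition~\ref{prop:mu-prop}.\ref{prop:mu-prop-b} together with the fact that $\mu_{U_P}$ takes values in $q^{\Z_{\ge0}}$, then reduce the negative case to the positive one by inversion. The paper compresses the reduction to the phrase ``replacing $w$ by $w^{-1}$ if necessary,'' tacitly invoking the inversion-invariance of the Bruhat order that you take the trouble to justify (via the subword characterization on $W_M^{\aff}$ and the $\Omega_M$-conjugation argument for the extension to $W_M$); your spelled-out version is sound.
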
 
\begin{proof} 
Replacing $w$ by $w^{-1}$ if necessary, we may assume $w\in W_{M^+}$. Then
Proposition~\ref{prop:mu-prop}.\ref{prop:mu-prop-b} implies $v\in W_{M^+}$.
\end{proof} 

\section{Parabolic induction}\label{sec:parabolic} 
\subsection{Reminder on abstract Hecke algebras}\label{subsec:reminder} 
We recall briefly the basics of Hecke algebras. A thorough introduction with
complete proofs can be found in \cite[Chapter~3, \S1.2]{Andrianov.1995}.

Let $X$ be a topological group and $S\subseteq X$ a submonoid containing a
compact open subgroup $\Gamma$. Then $S$ acts on the right on the free
$\Z$-module on the set of right cosets $\Gamma\backslash S$
\[
\Z[\Gamma\backslash S]\coloneqq \bigoplus_{\Gamma g\in \Gamma\backslash S}
\Z.(\Gamma g).
\]
The $\Z$-module of $\Gamma$-invariants
\begin{equation}\label{eq:Heckering}
H(\Gamma, S) = \Z[\Gamma\backslash S]^\Gamma = \set{t\in \Z[\Gamma\backslash
S]}{t\gamma = t\quad \text{for all $\gamma\in \Gamma$}}
\end{equation}
is free. The element
\[
T_g\coloneqq T_g^S \coloneqq \sum_{\Gamma h} (\Gamma h) \in H(\Gamma,S),
\]
where the sum ranges over all right cosets contained in the double coset $\Gamma
g\Gamma$, depends only on $\Gamma g\Gamma$. Moreover, $(T_g)_{\Gamma g\Gamma \in
\Gamma\backslash S/\Gamma}$ forms a $\Z$-basis of $H(\Gamma,S)$.

The isomorphism
\[
\End_{S}\bigl(\Z[\Gamma\backslash S]\bigr) \stackrel{\cong}{\longrightarrow}
H(\Gamma, S),\quad T\longmapsto T\bigl((\Gamma)\bigr)
\]
endows $H(\Gamma, S)$ with the structure of an associative ring with unit $T_1$.
Concretely, given elements $t = \sum_i a_i\cdot (\Gamma g_i)$ and $t' = \sum_j
a'_j\cdot (\Gamma g'_j)$ in $H(\Gamma, S)$, one has
\begin{equation}\label{eq:explicit-mult}
t\cdot t' = \sum_{i,j} a_ia'_j\cdot (\Gamma g_ig'_j) \in H(\Gamma, S).
\end{equation}

Given a commutative unital ring $R$, the $R$-algebra
\[
H_R(\Gamma, S) \coloneqq R\otimes H(\Gamma, S)
\]
is called the \emph{Hecke algebra} over $R$ associated with $(\Gamma, S)$.

\subsection{Parabolic induction}\label{subsec:parabolic} 
Let $\alg P = \alg M\alg U_{\alg P}$ be a parabolic $\field$-subgroup of $\alg
G$. Recall the pro-$p$ Iwahori subgroup $I_1$ of $G$. Then $I_{1,M} = I_1\cap M$
is a pro-$p$ Iwahori subgroup of $M$ \cite[Lemma~4.1.1]{Haines-Rostami.2009}.
Fix a commutative unital ring $R$. The algebras
\[
\Hecke_R(G)\coloneqq H_R(I_1,G)\qquad \text{and}\qquad \Hecke_R(M)\coloneqq
H_R(I_{1,M},M)
\]
are the usual pro-$p$ Iwahori--Hecke $R$-algebras \cite[(1)]{Vigneras.2016} of
$G$ and $M$, respectively. We write $T_w \coloneqq T_g$ whenever $w\in W(1)$
corresponds to $I_1gI_1$ under \eqref{eq:Bruhat-Decomposition}; a similar
convention applies to $\Hecke_R(M)$. Then $(T_w)_{w\in W(1)}$ and $(T^M_w)_{w\in
W_M(1)}$ are $R$-bases of $\Hecke_R(G)$ and $\Hecke_R(M)$, respectively.

\begin{defn} 
The $R$-algebra $\Hecke_R(P)\coloneqq H_R(I_1\cap P, P)$ is called the
\emph{parabolic pro-$p$ Iwahori--Hecke $R$-algebra}.
\end{defn} 

The main goal of this section will be to construct two $R$-algebra morphisms
\[
\begin{tikzcd}
& \Hecke_R(P) \ar[dl,"\Theta^P_M"'] \ar[dr,"\Xi^P_G"]\\
\Hecke_R(M) & & \Hecke_R(G).
\end{tikzcd}
\]
Pulling back along $\Theta^P_M$ and extending scalars along $\Xi^P_G$ then
defines a functor
\begin{equation}\label{eq:parabolic-induction}
\Mod{\Hecke_R(M)} \longrightarrow \Mod{\Hecke_R(G)},\qquad \module m
\longmapsto \module m\otimes_{\Hecke_R(P)} \Hecke_R(G)
\end{equation}
from the category of right $\Hecke_R(M)$-modules to the category of right
$\Hecke_R(G)$-modules. We then go on to prove that
\eqref{eq:parabolic-induction} is naturally isomorphic to the parabolic
induction functor studied in \cite[(4.2)]{Ollivier-Vigneras.2018} and
\cite{Vigneras.2015}.
\subsubsection{The positive subalgebra} 
Recall the monoid $M^+$ of $M$-positive elements. The algebra
\[
\Hecke_R(M^+)\coloneqq H_R(I_{1,M}, M^+)
\]
is called the \emph{positive subalgebra} of $\Hecke_R(M)$. (The fact that it is
indeed a subalgebra of $\Hecke_R(M)$ is clear from the explicit definition of
the multiplication.)

We collect two well-known and fundamental properties of $\Hecke_R(M^+)$.

\begin{prop}[{\cite[II.5]{Vigneras.1998}}]\label{prop:positive-1} 
Consider the injective $R$-linear map
\[ 
\xi\colon \Hecke_R(M)\longrightarrow \Hecke_R(G),\quad T^M_m \longmapsto T_m.
\] 
The restriction $\xi^+\coloneqq \xi\big|_{\Hecke_R(M^+)}$ respects the
product.
\end{prop}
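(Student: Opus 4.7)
The plan is to deduce the multiplicativity from a single coset-bijection lemma: for each $m\in M^+$, the inclusion $I_{1,M}\subseteq I_1$ induces a bijection $I_{1,M}\backslash I_{1,M} m I_{1,M} \xrightarrow{\sim} I_1\backslash I_1 m I_1$. Injectivity is immediate from $I_1\cap M = I_{1,M}$ (a consequence of the Iwahori decomposition $I_1 = I_{U_{P^\op}}\cdot I_{1,M}\cdot I_{U_P}$ and its unique factorization): if $\alpha_1,\alpha_2\in I_{1,M} m I_{1,M}\subseteq M$ satisfy $I_1\alpha_1 = I_1\alpha_2$, then $\alpha_1\alpha_2^{-1}\in I_1\cap M = I_{1,M}$. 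Surjectivity follows from a cardinality comparison: the group $I_1\cap m^{-1}I_1 m$ factors along the Iwahori decomposition as $(m^{-1}I_{U_{P^\op}}m)\cdot(I_{1,M}\cap m^{-1}I_{1,M}m)\cdot I_{U_P}$ by positivity ($\mu_{U_P}(m)=1$ yields $I_{U_P}\subseteq m^{-1}I_{U_P}m$, and $\mu_{U_{P^\op}}(m^{-1})=\mu_{U_P}(m)=1$ via Proposition~\ref{prop:mu-P^op} yields $m^{-1}I_{U_{P^\op}}m\subseteq I_{U_{P^\op}}$), so that $\mu(m) = \mu_M(m)$, matching the cardinalities of both sides.

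Given the bijection, I would choose common representatives $\{x_i\}\subseteq I_{1,M} m_1 I_{1,M}$ and $\{y_j\}\subseteq I_{1,M} m_2 I_{1,M}$ and apply the explicit multiplication formula~\eqref{eq:explicit-mult} in parallel in both algebras to obtain
\[
T^M_{m_1} T^M_{m_2} = \sum_{i,j} I_{1,M} x_i y_j \quad \text{in } \Hecke_R(M),\qquad T_{m_1} T_{m_2} = \sum_{i,j} I_1 x_i y_j \quad \text{in } \Hecke_R(G).
\]
Since $M^+$ is a submonoid of $M$ containing $I_{1,M}$ (because $I_{1,M}\subseteq K_M\subseteq M^+$), every product $x_i y_j$ lies in $M^+$. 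Writing $T^M_{m_1}T^M_{m_2} = \sum_n c_n T^M_n$ with $n\in M^+$, the $R$-linearity of $\xi$ gives $\xi^+(T^M_{m_1}T^M_{m_2}) = \sum_n c_n T_n$. The equality $T_{m_1} T_{m_2} = \sum_n c_n T_n$ then follows by matching multiplicities: since $x_iy_j\in M$, the chain of equivalences $I_1 x_i y_j = I_1 x_{i'} y_{j'} \iff x_i y_j (x_{i'} y_{j'})^{-1}\in I_1\cap M = I_{1,M} \iff I_{1,M} x_i y_j = I_{1,M} x_{i'} y_{j'}$ shows that the partition of the index set $\{(i,j)\}$ by left $I_1$-classes of $x_iy_j$ coincides with the partition by $I_{1,M}$-classes, so the two structure constants agree.

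The main obstacle will be the surjectivity half of the coset-bijection lemma, which crucially exploits both positivity assumptions $\mu_{U_P}(m) = \mu_{U_{P^\op}}(m^{-1}) = 1$ together with the Iwahori decomposition of $I_1$; once this is in place, the rest of the argument is a transparent comparison of multiplicities enabled by the identity $I_1\cap M = I_{1,M}$.
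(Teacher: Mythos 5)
Your proposed coset-bijection lemma is false, and this sinks the whole argument. Concretely, take $G = \GL_2(\Q_p)$, $P$ the upper-triangular Borel, $M$ the diagonal torus, and $m = \diag(p,1) \in M^+$. Since $M$ is abelian we have $I_{1,M}mI_{1,M} = I_{1,M}m$, so $\lvert I_{1,M}\backslash I_{1,M}mI_{1,M}\rvert = 1$; but $\lvert I_1\backslash I_1mI_1\rvert = q_m = p$. More generally, Proposition~\ref{prop:mu-prop}.\ref{prop:mu-prop-a} gives $q_m = \mu_{U_P}(m)\mu_{U_P}(m^{-1})q_{M,m} = \mu_{U_P}(m^{-1})q_{M,m}$ for $m\in M^+$, and $\mu_{U_P}(m^{-1})$ is not $1$ in general (indeed $\mu_{U_P}(m^{-1})=1$ would force $m\in M^-$ as well). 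So the cardinalities of the two coset spaces do not match.

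The precise error is in the cardinality comparison. Your factorization $I_1\cap m^{-1}I_1m = (m^{-1}I_{U_{P^\op}}m)\cdot(I_{1,M}\cap m^{-1}I_{1,M}m)\cdot I_{U_P}$ is correct, but from it one gets
\[
\mu(m) = [I_{U_{P^\op}} : m^{-1}I_{U_{P^\op}}m]\cdot \mu_M(m) = \mu_{U_{P^\op}}(m)\cdot \mu_M(m) = \mu_{U_P}(m^{-1})\cdot\mu_M(m),
\]
not $\mu(m)=\mu_M(m)$. You have conflated $\mu_{U_{P^\op}}(m^{-1}) = [I_{U_{P^\op}} : I_{U_{P^\op}}\cap mI_{U_{P^\op}}m^{-1}] = 1$ (which positivity does give, and which says $m^{-1}I_{U_{P^\op}}m \subseteq I_{U_{P^\op}}$) with the different index $[I_{U_{P^\op}} : m^{-1}I_{U_{P^\op}}m] = \mu_{U_{P^\op}}(m)$, which positivity does not control. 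The proper inclusion $m^{-1}I_{U_{P^\op}}m\subsetneq I_{U_{P^\op}}$ is exactly what makes $\lvert I_1\backslash I_1mI_1\rvert$ strictly larger than $\lvert I_{1,M}\backslash I_{1,M}mI_{1,M}\rvert$. Once the bijection fails, your representatives $\{x_i\}, \{y_j\}$ no longer exhaust the single cosets in $I_1m_1I_1$, $I_1m_2I_1$, and the parallel application of the multiplication formula~\eqref{eq:explicit-mult} is invalid. (What your cardinality argument does correctly prove is a bijection $I_{1,M}\backslash I_{1,M}mI_{1,M}\cong (I_1\cap P)\backslash(I_1\cap P)m(I_1\cap P)$ — this is Proposition~\ref{prop:double-coset} with $\Gamma=I_1\cap P$ and $\nu_M(m)=\mu_{U_P}(m)=1$ — but that relates $\Hecke_R(M^+)$ to $\Hecke_R(P)$, not to $\Hecke_R(G)$; bridging $\Hecke_R(P)$ and $\Hecke_R(G)$ is precisely the nontrivial part.) The partition-of-the-index-set observation at the end is fine on its own, but it is downstream of the false premise. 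The paper itself does not prove this statement; it is cited to Vign\'eras II.5, whose argument parametrizes the cosets of $I_1mI_1$ more carefully via $I_1mI_1 = I_1mI_{U_{P^\op}}$ and tracks the $I_{U_{P^\op}}$-contribution rather than claiming it is absent.
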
 

\begin{rmk} 
Proposition~\ref{prop:positive-1} relies solely on the Iwahori decomposition of
$I_1$, \ie we could replace $I_1$ by any compact open subgroup $\Gamma$
satisfying $\Gamma = (\Gamma\cap U_{P^\op}).(\Gamma\cap M).(\Gamma\cap U_{P})$.
However, Proposition~\ref{prop:positive-1} fails for groups like $K$ that do
not admit an Iwahori decomposition.
\end{rmk} 

\begin{prop}[{\cite[II.6]{Vigneras.1998}}]\label{prop:positive-2} 
The following assertions are equivalent:
\begin{enumerate}[label=(\alph*)]
\item $\xi^+\colon \Hecke_R(M^+)\to \Hecke_R(G)$ extends to a morphism $\wt
\xi^+\colon \Hecke_R(M)\to \Hecke_R(G)$ of $R$-algebras.
\item There exists a strictly positive element $a\in M$
such that $T_a$ is invertible in $\Hecke_R(G)$.
\end{enumerate}
If one of the assertions holds then $T_a$ is invertible for all strictly
positive elements $a\in M$ and $\wt \xi^+$ is unique.
\end{prop}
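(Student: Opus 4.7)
The plan is to realize $\Hecke_R(M)$ as the localization of $\Hecke_R(M^+)$ at a suitable central element, and then to invoke the universal property of localization. Fix a strictly positive central $a\in M$; such elements exist by the remarks following the definition of strict positivity. Since $a$ is central in $M$, the element $\nu(aZ_0)\in V$ lies in $\R\otimes \X_*(C_M)$, so its image in $V_M$ vanishes and hence $e^{aZ_0}$ acts trivially on $\apartment_M$. Consequently $e^{aZ_0}\in \Omega_M$ has length zero in $W_M$, and for every $m\in M$ the braid relations in $\Hecke_R(M)$ yield
\[
(T^M_a)^n = T^M_{a^n},\qquad T^M_{a^n}\cdot T^M_m = T^M_{a^nm} = T^M_m\cdot T^M_{a^n},
\]
using centrality of $a$ in $M$. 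In particular $T^M_a$ is a unit in $\Hecke_R(M)$ and lies in its center.

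Next I would observe that for each $m\in M$ one has $a^nm\in M^+$ for $n\gg 0$ (the last item in the remark on strictly positive elements), so the identity $T^M_m = (T^M_a)^{-n}\cdot T^M_{a^nm}$ shows that every element of $\Hecke_R(M)$ has the form $(T^M_a)^{-n}\cdot h$ with $h\in \Hecke_R(M^+)$. This identifies $\Hecke_R(M)$ with the localization $\Hecke_R(M^+)\bigl[(T^M_a)^{-1}\bigr]$ at the central regular element $T^M_a$.

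With this localization picture in hand, both directions of the equivalence become essentially formal. For (a)$\Rightarrow$(b): any extension $\wt\xi^+$ must send the unit $T^M_a$ to $\wt\xi^+(T^M_a) = \xi^+(T^M_a) = T_a$, which must therefore be a unit in $\Hecke_R(G)$. For (b)$\Rightarrow$(a): the universal property of localization extends $\xi^+\colon \Hecke_R(M^+)\to \Hecke_R(G)$---which sends the central element $T^M_a$ to the invertible element $T_a$---uniquely to a ring homomorphism $\wt\xi^+\colon \Hecke_R(M^+)\bigl[(T^M_a)^{-1}\bigr] = \Hecke_R(M)\to \Hecke_R(G)$, determined by $\wt\xi^+\bigl((T^M_a)^{-n}\cdot T^M_{a^nm}\bigr) = T_a^{-n}\cdot T_{a^nm}$. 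The two concluding assertions of the proposition follow immediately: once $\wt\xi^+$ exists, any other strictly positive $a'$ has $T^M_{a'}$ a unit in $\Hecke_R(M)$, so $T_{a'} = \wt\xi^+(T^M_{a'})$ is also a unit in $\Hecke_R(G)$; and uniqueness of $\wt\xi^+$ is built into the universal property used above.

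The main subtlety I foresee is the geometric statement that a strictly positive central $a\in M$ yields $e^{aZ_0}\in \Omega_M$. This hinges on the identification $V_M = V/\R\otimes\X_*(C_M)$ together with the fact that centrality of $a$ in $M$ places $\nu(a)$ in $\R\otimes\X_*(C_M)$, so that $e^{aZ_0}$ translates by a vector becoming zero in $V_M$. Once this length-zero property is in place, everything reduces to routine braid-relation computations and the formal universal property of a localization at a central element.
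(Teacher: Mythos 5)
Your proof is correct and follows the same strategy as the paper's: identify $\Hecke_R(M)$ with the localization of $\Hecke_R(M^+)$ at the central unit $T^M_a$ and then invoke the universal property. You have simply made explicit the geometric reason (that $\nu_M(a)=0$, so $e^{aZ_0}\in\Omega_M$ has $M$-length zero) why $T^M_a$ is central and invertible, which the paper asserts without elaboration.
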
 
\begin{proof} 
If $a\in M$ is strictly positive then $T^M_a$ is central and invertible in
$\Hecke_R(M)$. Since $(T^M_a)^n\cdot T^M_m = T^M_{a^nm} \in \Hecke_R(M^+)$, for
any $m\in M$ and $n\gg0$, it follows that $\Hecke_R(M)$ is the
localization of $\Hecke_R(M^+)$ at $T^M_a$. The proposition is a formal
consequence of this fact.
\end{proof} 
\subsubsection{The morphism \texorpdfstring{$\Theta^P_M$}{Theta}} 
Let $\Gamma \subseteq P$ be a compact open subgroup satisfying $\Gamma =
\Gamma_M\Gamma_{U_P}$, where $\Gamma_M = \Gamma \cap M$ and $\Gamma_{U_P} =
\Gamma\cap U_P$. For example, $\Gamma$ could be the intersection of $P$ with one
of the groups $K$, $I$, $I_1$.
Recall the notation $g_M \coloneqq \pr_M(g)$, where $\pr_M\colon P\to M$ is the
projection.
\begin{prop}\label{prop:Theta} 
The $R$-linear map
\begin{align*}
\Theta^P_M \coloneqq \Theta^P_{M,R}\colon H_R(\Gamma,P) &\longrightarrow
H_R(\Gamma_M,M),\\
T^P_g &\longmapsto \nu_M(g)\mu_{U_P}(g)\cdot T^M_{g_M}
\end{align*}
is a homomorphism of $R$-algebras.
\end{prop}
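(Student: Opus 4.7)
The strategy is to realize $\Theta^P_M$ as induced by the natural $R$-linear projection
\[
\pi\colon R[\Gamma\backslash P] \longrightarrow R[\Gamma_M\backslash M], \qquad \Gamma g\longmapsto \Gamma_M g_M,
\]
coming from the group projection $\pr_M\colon P\twoheadrightarrow M$. The whole argument hinges on the identity $(ab)_M = a_M b_M$ for $a,b\in P$, which is a consequence of the fact that $M$ normalizes $U_P$: writing $ab = a_M a_U b_M b_U = a_Mb_M\cdot (b_M^{-1}a_Ub_M)b_U$ exhibits $a_Mb_M$ as the $M$-component of $ab$.

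Using this identity, I would first check that $\pi$ is well-defined (if $\Gamma g = \Gamma g'$ then $g' = \gamma g$ with $\gamma\in \Gamma$, and $g'_M = \gamma_M g_M\in \Gamma_M g_M$), and that $\pi$ maps $H_R(\Gamma, P) = R[\Gamma\backslash P]^\Gamma$ into $H_R(\Gamma_M, M) = R[\Gamma_M\backslash M]^{\Gamma_M}$ (since $\Gamma_M\subseteq \Gamma$, $\Gamma$-invariance of a sum implies $\Gamma_M$-invariance of its projection). Multiplicativity of the resulting restricted map is then immediate from the explicit product formula \eqref{eq:explicit-mult}: for $t = \sum a_i (\Gamma g_i)$ and $t' = \sum a'_j (\Gamma g'_j)$ in $H_R(\Gamma, P)$,
\[
\pi(t\cdot t') = \sum_{i,j} a_ia'_j \bigl(\Gamma_M (g_i)_M (g'_j)_M\bigr) = \pi(t)\cdot \pi(t'),
\]
where the second equality uses \eqref{eq:explicit-mult} applied in $H_R(\Gamma_M, M)$.

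It then remains to identify $\pi(T^P_g)$ with $\nu_M(g)\mu_{U_P}(g)\cdot T^M_{g_M}$. Applying $\pi$ to the disjoint double coset decomposition from Proposition~\ref{prop:double-coset} yields
\[
\pi(T^P_g) = \sum_{i=1}^{\mu_M(g_M)}\sum_{j=1}^{\nu_M(g)}\sum_{s=1}^{\mu_{U_P}(g)} \bigl(\Gamma_M g_M h_j m_i\bigr),
\]
using $(g u_s h_j m_i)_M = g_M h_j m_i$. The $s$-summand is independent of $s$, contributing the factor $\mu_{U_P}(g)$. For fixed $i$, the cosets $\Gamma_M g_M h_j m_i$ all collapse onto $\Gamma_M g_M m_i$: since $h_j\in (\Gamma_M)_{(g_M)} = \Gamma_M\cap g_M^{-1}\Gamma_M g_M$ we have $g_M h_j g_M^{-1}\in \Gamma_M$, contributing the factor $\nu_M(g)$. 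The remaining sum $\sum_i (\Gamma_M g_M m_i)$ is precisely $T^M_{g_M}$, since $\Gamma_M = \bigsqcup_i (\Gamma_M)_{(g_M)}m_i$ parametrizes $\Gamma_M\backslash \Gamma_M g_M\Gamma_M$. I foresee no serious obstacle: the whole proof is essentially bookkeeping around the double coset formula, with the only subtle point being careful tracking of which cosets of $\Gamma_M$ coincide after applying $\pi$.
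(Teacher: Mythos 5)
Your proposal is correct and follows essentially the same route as the paper: realize $\Theta^P_M$ as the map induced by the projection $\pr_M\colon P\to M$ on the cosets, deduce multiplicativity from the explicit product formula, and then use the double coset decomposition of Proposition~\ref{prop:double-coset} to evaluate the image of $T^P_g$. The only cosmetic difference is that the paper phrases the preservation of invariants via right $P$-linearity of $\vartheta$ under the inflation action, while you argue directly via $\Gamma_M$-equivariance and $\Gamma_M\subseteq\Gamma$; these are the same observation.
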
 
\begin{proof} 
The projection $\pr_M\colon P\to M$ induces an
$R$-linear map $\vartheta\colon R[\Gamma\backslash P] \to R[\Gamma_M\backslash
M]$, given explicitly by $(\Gamma g)\mapsto (\Gamma_Mg_M)$. Since $\pr_M(\Gamma)
\subseteq \Gamma_M$ and $\vartheta$ is right $P$-linear if we let $P$ act by
inflation on $R[\Gamma_M\backslash M]$, it follows that $\vartheta$ maps
$H_R(\Gamma, P) = R[\Gamma\backslash P]^\Gamma$ into $H_R(\Gamma_M,M) =
R[\Gamma_M\backslash M]^{\Gamma_M}$. By restriction we obtain an $R$-linear map
$\theta\colon H_R(\Gamma, P)\to H_R(\Gamma_M,M)$. It is obvious from the
explicit description of multiplication \eqref{eq:explicit-mult} that $\theta$
respects the product. Therefore, it remains to prove $\theta = \Theta^P_M$. Let
$g\in P$. By Proposition~\ref{prop:double-coset} we have
\[
T^P_g = \sum_{i=1}^{\mu_M(g_M)} \sum_{j=1}^{\nu_M(g)}
\sum_{s=1}^{\mu_{U_P}(g)} (\Gamma gu_sh_jm_i)
\]
in $R[\Gamma\backslash P]$, for certain $u_s\in \Gamma_{U_P}$, $h_j\in
(\Gamma_M)_{(g_M)}$, and $m_i\in \Gamma_M$ with $\Gamma_M =
\bigsqcup_{i=1}^{\mu_M(g_M)} (\Gamma_M)_{(g_M)}m_i$.
Since $g_Mh_j \in \Gamma_Mg_M$, by
definition of $h_j$, we obtain
\[
\theta(T^P_g) = \sum_{i,j,s} (\Gamma_Mg_Mh_jm_i) = \sum_{i,j,s}
(\Gamma_Mg_Mm_i) = \nu_M(g)\mu_{U_P}(g)\cdot T^M_{g_M},
\]
where the last equality uses $\Gamma_Mg_M\Gamma_M = \bigsqcup_{i=1}^{\mu_M(g_M)}
\Gamma_Mg_Mm_i$, cf. \cite[Chapter~3, Lemma~1.2]{Andrianov.1995}.
Hence, $\theta$ and $\Theta^P_M$ coincide.
\end{proof} 

For the next two consequences we assume $\Gamma_{U_P} = I_{U_P}$.

\begin{cor}\label{cor:Image-basis} 
The system $(\mu_{U_P}(m) T^M_m)_m$, where
$m$ runs through a system of representatives of $\Gamma_M\backslash M/\Gamma_M$
with $\mu_{U_P}(m)\neq 0$ in $R$, generates $\Image(\Theta^P_{M,R})$ over $R$. If
$R$ is $p$-torsionfree, then it is an $R$-basis.
\end{cor}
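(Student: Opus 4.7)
The plan is to combine Proposition~\ref{prop:Theta}, which gives the explicit formula $\Theta^P_M(T^P_g) = \nu_M(g)\mu_{U_P}(g) T^M_{g_M}$, with the divisibility $\mu_{U_P}(g_M)\mid \mu_{U_P}(g)$ supplied by Proposition~\ref{prop:inequality}. The argument splits naturally into a generation statement and, under the $p$-torsionfree hypothesis, a linear independence statement.

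For generation I would proceed in two steps. First, I plan to show that if $g = m$ lies in $M$ then $\nu_M(m)=1$: since $M$ normalises $U_P$ and the product decomposition $\Gamma = \Gamma_M\Gamma_{U_P}$ is unique, an element $\gamma=\gamma_M\gamma_U\in \Gamma$ satisfies $m\gamma m^{-1}\in \Gamma$ if and only if $m\gamma_Mm^{-1}\in \Gamma_M$ and $m\gamma_Um^{-1}\in \Gamma_{U_P}$, yielding $\Gamma_{(m)} = (\Gamma_M)_{(m)}\cdot(\Gamma_{U_P})_{(m)}$ and hence $\pr_M(\Gamma_{(m)}) = (\Gamma_M)_{(m)}$. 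Proposition~\ref{prop:Theta} then gives $\Theta^P_M(T^P_m) = \mu_{U_P}(m)\,T^M_m$, so every $\mu_{U_P}(m)\,T^M_m$ lies in $\Image(\Theta^P_{M,R})$. Second, for arbitrary $g\in P$, Proposition~\ref{prop:inequality} allows me to define the integer $c(g)\coloneqq \mu_{U_P}(g)/\mu_{U_P}(g_M) \in \Z_{>0}$, whence
\[
\Theta^P_M(T^P_g) = \nu_M(g)\,c(g)\cdot\bigl(\mu_{U_P}(g_M)\,T^M_{g_M}\bigr)
\]
is an $R$-multiple of $\mu_{U_P}(g_M)\,T^M_{g_M}$. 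As the $T^P_g$ form an $R$-basis of $H_R(\Gamma,P)$ when $g$ ranges over $\Gamma\backslash P/\Gamma$, the image is $R$-spanned by the elements $\mu_{U_P}(m)\,T^M_m$ indexed by $\Gamma_M\backslash M/\Gamma_M$; the indices with $\mu_{U_P}(m)=0$ in $R$ contribute zero and may be omitted.

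For the basis claim I would use that $\mu_{U_P}(m)\in q^{\Z_{\ge 0}}$ is a non-negative power of $q$, hence of $p$. When $R$ is $p$-torsionfree the element $p\in R$ is a non-zero-divisor, so $\mu_{U_P}(m)$ is a non-zero-divisor in $R$ for every $m$. Since $(T^M_m)_m$ is an $R$-basis of $\Hecke_R(\Gamma_M, M) = H_R(\Gamma_M, M)$, any vanishing linear combination $\sum_m r_m\mu_{U_P}(m)\,T^M_m = 0$ forces $r_m\mu_{U_P}(m)=0$ and then $r_m=0$, giving the required independence.

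I do not foresee a genuine obstacle, since Propositions~\ref{prop:Theta} and~\ref{prop:inequality} carry out the nontrivial work. The only step that deserves care is the identity $\nu_M(m)=1$ for $m\in M$, but this reduces to the uniqueness of the Iwahori-type factorisation in $\Gamma$ and is essentially formal.
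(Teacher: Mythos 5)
Your proof is correct and follows essentially the same route as the paper: note $\nu_M(m)=1$ for $m\in M$, invoke Proposition~\ref{prop:inequality} to see every $\Theta^P_M(T^P_g)$ is a $\Z_{>0}$-multiple of $\mu_{U_P}(g_M)T^M_{g_M}$, and conclude from Proposition~\ref{prop:Theta}. You supply a clean justification of $\nu_M(m)=1$ via the uniqueness of the decomposition $\Gamma = \Gamma_M\Gamma_{U_P}$, which the paper merely asserts, and the non-zero-divisor argument for the $p$-torsionfree case is the intended one.
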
 
\begin{proof} 
Notice that $\nu_M(m) = 1$ whenever $m\in M$. Moreover, we have
$\mu_{U_P}(g) \ge \mu_{U_P}(g_M)$, for all $g\in P$, by
Proposition~\ref{prop:inequality}. The assertion now follows from
Proposition~\ref{prop:Theta}.
\end{proof} 

\begin{cor} 
The algebra $H_R(\Gamma_M, M^+)$ is contained in
$\Image(\Theta^P_M)$ with equality whenever $qR = 0$.
\end{cor}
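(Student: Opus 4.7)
The plan is to read off both directions directly from Corollary~\ref{cor:Image-basis} together with a small observation about the values of $\mu_{U_P}$.

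For the inclusion $H_R(\Gamma_M, M^+) \subseteq \Image(\Theta^P_M)$, I would first remark that for any $m \in M$ one has $\nu_M(m) = 1$: the decomposition $\Gamma = \Gamma_M \Gamma_{U_P}$ combined with the fact that $m \in M$ normalizes $U_P$ shows that an element $\gamma = \gamma_M \gamma_{U_P}$ lies in $\Gamma_{(m)}$ if and only if $\gamma_M \in (\Gamma_M)_{(m)}$ and $\gamma_{U_P} \in m^{-1}\Gamma_{U_P}m$; projecting to $M$ then yields $\pr_M(\Gamma_{(m)}) = (\Gamma_M)_{(m)}$. Hence by Proposition~\ref{prop:Theta} we have $\Theta^P_M(T^P_m) = \mu_{U_P}(m)\, T^M_m$ for every $m \in M$. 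Restricting to $m \in M^+$ gives $\mu_{U_P}(m) = 1$ by definition, so $T^M_m \in \Image(\Theta^P_M)$; as these elements form an $R$-generating system (in fact an $R$-basis when $m$ runs through representatives of $\Gamma_M\backslash M^+/\Gamma_M$) of $H_R(\Gamma_M, M^+)$, the inclusion follows.

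For the equality when $qR = 0$, I would invoke Corollary~\ref{cor:Image-basis}: the image of $\Theta^P_M$ is $R$-generated by the elements $\mu_{U_P}(m) T^M_m$ as $m$ ranges over representatives of $\Gamma_M \backslash M / \Gamma_M$ with $\mu_{U_P}(m) \neq 0$ in $R$. The key observation is that $\mu_{U_P}$ takes values in $q^{\Z_{\ge 0}}$, and $\mu_{U_P}(m) = 1$ exactly when $m \in M^+$. Consequently, if $qR = 0$, then for every $m \notin M^+$ the coefficient $\mu_{U_P}(m)$ is a positive power of $q$ and hence vanishes in $R$, so that generator is zero. Only the representatives $m \in M^+$ survive, and for these $\mu_{U_P}(m) = 1$, leaving exactly the generating set $\{T^M_m\}_{m \in \Gamma_M \backslash M^+/\Gamma_M}$ of $H_R(\Gamma_M, M^+)$. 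Hence $\Image(\Theta^P_M) = H_R(\Gamma_M, M^+)$.

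There is no real obstacle here: both steps are immediate corollaries of material already proved, and the only point requiring a brief verification is the identity $\nu_M(m) = 1$ for $m \in M$, which is an easy consequence of the semidirect product structure $\Gamma = \Gamma_M \Gamma_{U_P}$ and the fact that $M$ normalizes $U_P$.
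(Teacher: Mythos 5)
Your proof is correct and follows essentially the same route as the paper: both deduce the statement from Corollary~\ref{cor:Image-basis} together with the observations that $\mu_{U_P}$ takes values in $q^{\Z_{\ge 0}}$ and that $\mu_{U_P}(m)=1$ if and only if $m\in M^+$. Two minor remarks: your re-derivation of $\nu_M(m)=1$ for $m\in M$ is redundant, since it is already recorded in the proof of Corollary~\ref{cor:Image-basis}; and the paper additionally notes that $\Gamma_M$ normalizes $\Gamma_{U_P}=I_{U_P}$, hence $\Gamma_M\subseteq M^+$, so that $H_R(\Gamma_M,M^+)$ is defined in the first place.
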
 
\begin{proof} 
Since $\Gamma_M$ normalizes $\Gamma_{U_P} = I_{U_P}$, we have
$\Gamma_M\subseteq M^+$ and $H_R(\Gamma_M,M^+)$ is defined. Keeping in mind that
$\mu_{U_P}$ takes values in $q^{\Z_{\ge0}}$, the assertion follows from
Corollary~\ref{cor:Image-basis}.
\end{proof} 
\subsubsection{The morphism \texorpdfstring{$\Xi^P_G$}{Xi}} 
Lacking a good reference, we formulate Vign\'eras’ ``fundamental lemma'', a
proof of which appears in \cite[Lemma~13]{Vigneras.2005} (or
\cite[1.2]{Vigneras.2003}) for $\field$-split $\alg G$, and which for general
$\alg G$ is known to the experts.
\begin{lem}[Fundamental Lemma]\label{lem:fundamental} 
Let $v,w\in W(1)$. Then
\[
q_{v,w}\cdot T_v^{-1} T_{vw} - T_w \in \bigoplus_{w'<w} \Z.T_{w'}\qquad \text{in
$\Hecke_\Z(G)$,}
\]
where $<$ denotes the Bruhat order in $W(1)$.
\end{lem}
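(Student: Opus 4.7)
The plan is to proceed by induction on $\ell(v)$, uniformly in $w \in W(1)$. The essential tools are the braid relation $T_v T_w = T_{vw}$ when $\ell(vw) = \ell(v) + \ell(w)$, the product formula $q_v q_w = q_{vw} q_{v,w}^2$, and the quadratic relation in $\Hecke_\Z(G)$ for a lifted simple affine reflection $\tilde s \in S^\aff(1)$, namely $T_{\tilde s}^2 = q_s T_{\tilde s^2} + c_{\tilde s} T_{\tilde s}$ with $c_{\tilde s} \in \Z[Z_0/Z_1]$. The base case $\ell(v) = 0$ is immediate: $T_v$ is a unit of $\Hecke_\Z(G)$, braid gives $T_v T_w = T_{vw}$, and the product formula forces $q_{v,w} = 1$ since $q_v = 1$.

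For the inductive step I factor $v = \tilde s v'$ with $\tilde s \in S^\aff(1)$ and $\ell(v') = \ell(v) - 1$, and distinguish cases. When $\ell(vw) = \ell(v'w) + 1$, the braid relation gives $T_{vw} = T_{\tilde s} T_{v'w}$ and a brief product-formula calculation yields $q_{v,w} = q_{v',w}$; hence $q_{v,w} T_v^{-1} T_{vw} = q_{v',w} T_{v'}^{-1} T_{v'w}$ and the induction hypothesis applied to $(v', w)$ concludes. When $\ell(vw) = \ell(v'w) - 1$ one finds instead $q_{v,w} = q_s q_{v',w}$; the quadratic relation rearranges to $q_s T_{\tilde s}^{-1} = T_{\tilde s^{-1}} - c_{\tilde s} T_{\tilde s^{-2}}$ (valid integrally in $\Hecke_\Z(G)$), which combined with the braid identity $T_{\tilde s^{-1}} T_{vw} = T_{v'w}$ yields
$$q_s T_{\tilde s}^{-1} T_{vw} = T_{v'w} - c_{\tilde s} T_{\tilde s^{-1} v'w}.$$
Left-multiplication by $q_{v',w} T_{v'}^{-1}$ decomposes $q_{v,w} T_v^{-1} T_{vw}$ as $q_{v',w} T_{v'}^{-1} T_{v'w}$ (to which the induction hypothesis applies) plus a correction $-q_{v',w} T_{v'}^{-1} c_{\tilde s} T_{\tilde s^{-1} v'w}$.

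The main obstacle is controlling this correction term. Since $c_{\tilde s}$ is supported on $Z_0/Z_1$, every basis element appearing in $c_{\tilde s} T_{\tilde s^{-1} v'w}$ has image in $W$ equal to $\bar{vw} = s\bar{v'}\bar w$. Using the normality of $Z_0/Z_1$ in $W(1)$ to commute $c_{\tilde s}$ past $T_{v'}^{-1}$ at the cost of a conjugation, the correction reduces to a $\Z$-combination of products of the form $T_{v'}^{-1} T_{v'w''}$ with $\bar{w''} = s'\bar w$, where $s' = \bar{v'}^{-1} s \bar{v'}$ is the reflection in $W$ conjugate to $s$. The crucial structural observation is that the case hypothesis---that $s$ is a left descent of $\bar{v'}\bar w$---is equivalent, via the standard characterization of descents in Coxeter groups as roots sent to negative roots, to $s'$ being a left descent of $\bar w$; hence $\bar{w''} < \bar w$ in the Bruhat order on $W$, and therefore $w'' < w$ in $W(1)$. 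Applying the induction hypothesis to each such pair $(v', w'')$ then places the correction inside $\bigoplus_{w'<w} \Z T_{w'}$, completing the proof. The remaining technicalities---matching the coefficient $q_{v',w}$ against the $q_{v',w''}$ expected by the induction hypothesis via the hyperplane description of $q_{-,-}$, and preserving integrality through the quadratic-relation manipulation---constitute the careful but essentially routine bookkeeping that is the technical heart of the argument and the part where the general (non-split) case genuinely requires more care than Vignéras' original split-group proof.
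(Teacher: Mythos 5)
Your proof takes a genuinely different route from the paper. The paper's argument is a three-line deduction from Vign\'eras' alcove-walk machinery: choosing the orientation $o$ of $(\apartment,\hyperplanes)$ that places $\chamber$ on the negative side of every hyperplane, one has $E_o(w)=T_w$, so the product formula $E_o(v)\,E_{o\bullet v}(w)=q_{v,w}E_o(vw)$ (Vign\'eras, Thm.~5.25) gives $q_{v,w}T_v^{-1}T_{vw}=E_{o\bullet v}(w)$, and the triangularity $E_{o'}(w)\in T_w+\bigoplus_{w'<w}\Z T_{w'}$ (Vign\'eras, Cor.~5.26) finishes. Your proposal instead does an elementary induction on $\ell(v)$ using only the braid and quadratic relations, which is attractive because it avoids importing the whole theory of alcove walk bases. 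The structure is sound: the base case is correct; in the case $\ell(vw)=\ell(v'w)+1$ the identity $q_{v,w}=q_{v',w}$ is right and the reduction to $(v',w)$ is immediate; and in the descending case the rewriting $q_sT_{\tilde s}^{-1}=T_{\tilde s^{-1}}-T_{\tilde s^{-2}}c_{\tilde s}$ is valid integrally. Your root-inversion argument showing $\bar w''=s'\bar w<\bar w$ is also correct, \emph{provided} one adds the observation that $\bar v'^{-1}\alpha_s>0$ (which holds because $\ell(\tilde s v')=\ell(v')+1$ means $s$ is not a left descent of $\bar v'$); without this the sign of the root of $s'$ is ambiguous and the equivalence you invoke could flip.

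The genuine gap is the step you label ``essentially routine bookkeeping,'' namely the divisibility $q_{v',w''}\mid q_{v',w}$ needed so that $q_{v',w}T_{v'}^{-1}T_{v'w''}$ is an \emph{integral} multiple of what the induction hypothesis produces. This is the crux and is not a one-line check. What makes it work is the gallery description of the $q$'s: folding a minimal gallery from $\chamber$ to $w\chamber$ along the hyperplane $H_{s'}=\bar v'^{-1}H_s$ produces a (not necessarily minimal) gallery of length $\ell(w)-1$ from $\chamber$ to $w''\chamber$ whose crossed hyperplanes are $W$-conjugates of those of the original gallery minus $H_{s'}$; together with the $W$-conjugation invariance $q(uH)=q(H)$ this gives $q_s\,q_{w''}\le q_w$, whence $q_{v',w}^2/q_{v',w''}^2=q_w/(q_sq_{w''})\ge 1$, and since all quantities lie in $q^{\Z_{\ge 0}}$ the inequality is divisibility. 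You gesture at ``the hyperplane description of $q_{-,-}$'' but do not carry this out, and in a general (non-split, non-reduced, $\Omega\neq1$) setting it is precisely the sort of thing that cannot be waved through. With that lemma supplied, your induction is a complete and self-contained alternative to the alcove-walk proof.
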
 
\begin{proof} 
Let $o$ be the orientation of $(\apartment,\hyperplanes)$ (cf.
\cite[5.2]{Vigneras.2016}) which places
$\chamber$ in the negative half-space of each hyperplane $H\in \hyperplanes$.
Then $E_o(w) = T_w$ in the notation of \cite[Definition~5.22]{Vigneras.2016}.
Computing inside $\Hecke_{\Z[p^{-1}]}(G)$, we have
\[
q_{v,w}T_v^{-1}T_{vw} = q_{v,w}E_o(v)^{-1}E_o(vw) = E_{o\bullet v}(w) \in
\Hecke_\Z(G),
\]
by \cite[Theorem~5.25]{Vigneras.2016}, and the assertion follows from
\cite[Corollary~5.26]{Vigneras.2016}.
\end{proof} 

\begin{prop}\label{prop:Xi} 
Assume that $R$ is $p$-torsionfree. Then $\xi^+\colon \Hecke_R(M^+)
\to \Hecke_R(G)$ (see Proposition~\ref{prop:positive-1}) extends uniquely to an
injective $R$-algebra morphism
\[
\wt \xi^+\colon \Image(\Theta^P_M) \longrightarrow \Hecke_R(G),
\]
and $\Image(\Theta^P_M)$ is the maximal subalgebra of $\Hecke_R(M)$ with this
property.
\end{prop}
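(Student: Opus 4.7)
The strategy is to construct the extension after inverting $p$, and then restrict back. Since $R$ is $p$-torsionfree we have $\Hecke_R \hookrightarrow \Hecke_{R[p^{-1}]}$ for both $G$ and $M$. In $\Hecke_{R[p^{-1}]}(G)$ every $T_w$ is invertible: the generators $T_s$, $s \in S^\aff(1)$, are inverted via the quadratic relation (as $q_s$ is a power of $p$), while the $T_\omega$, $\omega\in\Omega(1)$, are units as length-zero elements. Fix any strictly positive central $a\in M$ (one exists by the remark following the definition); then $T_a$ is invertible in $\Hecke_{R[p^{-1}]}(G)$, and Proposition~\ref{prop:positive-2}, applied over $R[p^{-1}]$, delivers a unique $R[p^{-1}]$-algebra extension
\[
\wt\xi^+_{R[p^{-1}]}\colon \Hecke_{R[p^{-1}]}(M)\longrightarrow \Hecke_{R[p^{-1}]}(G)
\]
of $\xi^+_{R[p^{-1}]}$. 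It then remains to show that this extension maps $\Image(\Theta^P_M)$ into $\Hecke_R(G)$, and to take $\wt\xi^+$ as the resulting restriction.

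\medskip

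By Corollary~\ref{cor:Image-basis} the elements $\tau_w := \mu_{U_P}(w)T^M_w$, $w\in W_M(1)$, form an $R$-basis of $\Image(\Theta^P_M)$, so I would compute $\wt\xi^+_{R[p^{-1}]}(\tau_w)$ explicitly. Fix a strictly positive central $\lambda \in \Lambda(1)$ and set $v := e^{n\lambda}$ for $n$ large enough that $vw\in W_{M^+}(1)$. Since $\lambda$ is $M$-central, $v$ acts trivially on $\apartment_M$, hence $v\in \Omega_M$; consequently $\ell_M(v)=0$, $q_{M,v,w}=1$, $T^M_v T^M_w = T^M_{vw}$ in $\Hecke_R(M)$, and $T^M_v$ is a unit. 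Therefore $T^M_w = (T^M_v)^{-1}T^M_{vw}$, and as $v,vw\in W_{M^+}(1)$ one obtains $\wt\xi^+_{R[p^{-1}]}(T^M_w) = T_v^{-1}T_{vw}$. Corollary~\ref{cor:mu-prop} then yields $q_{v,w} = \mu_{U_P}(w)\cdot q_{M,v,w} = \mu_{U_P}(w)$, so
\[
\wt\xi^+_{R[p^{-1}]}(\tau_w) = q_{v,w}\,T_v^{-1}T_{vw},
\]
which by the Fundamental Lemma (Lemma~\ref{lem:fundamental}) equals $T_w + \sum_{w'<_G w} c_{w'} T_{w'} \in \Hecke_\Z(G) \subseteq \Hecke_R(G)$. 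This produces an $R$-algebra morphism $\wt\xi^+\colon \Image(\Theta^P_M)\to\Hecke_R(G)$ extending $\xi^+$; injectivity is immediate from the $W(1)$-Bruhat triangularity of $\wt\xi^+(\tau_w) = T_w + (\text{lower})$.

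\medskip

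For uniqueness and maximality: any $R$-algebra extension $\phi\colon \mathcal{A}\to\Hecke_R(G)$ of $\xi^+$ defined on a subalgebra $\Hecke_R(M^+)\subseteq \mathcal{A}\subseteq \Hecke_R(M)$ tensors up to an extension of $\xi^+_{R[p^{-1}]}$ on $\mathcal{A}\otimes_R R[p^{-1}]$, which must agree with $\wt\xi^+_{R[p^{-1}]}$ by the uniqueness in the first paragraph; because $\Hecke_R \hookrightarrow \Hecke_{R[p^{-1}]}$, this determines $\phi$ as the restriction of $\wt\xi^+_{R[p^{-1}]}$. Uniqueness of $\wt\xi^+$ is the special case $\mathcal{A} = \Image(\Theta^P_M)$. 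For the maximality assertion, I would write $x = \sum_{w\in W_M(1)} a_w T^M_w \in \mathcal{A}$ and expand $\phi(x)$ in the $(T_y)_{y\in W(1)}$-basis using the formula $\wt\xi^+_{R[p^{-1}]}(T^M_w) = \mu_{U_P}(w)^{-1}\bigl(T_w + (\text{lower Bruhat terms})\bigr)$ established above; descending on the $W(1)$-Bruhat order restricted to $\mathrm{supp}(a)\subseteq W_M(1)$, for $w$ maximal in $\mathrm{supp}(a)$ the $T_w$-coefficient of $\phi(x)$ is exactly $a_w/\mu_{U_P}(w)$, forcing $\mu_{U_P}(w)\mid a_w$ in $R$; the general case follows by induction after subtracting the (already integral) contributions from larger $w'\in W_M(1)\cap\mathrm{supp}(a)$. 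Hence $x\in \Image(\Theta^P_M)$. The main obstacle is the second paragraph: the precise cancellation $q_{v,w} = \mu_{U_P}(w)$ resulting from Corollary~\ref{cor:mu-prop} and the choice $v\in\Omega_M$ is exactly what makes the Fundamental Lemma deliver integrality on the nose.
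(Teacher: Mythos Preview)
Your proof is correct and follows essentially the same route as the paper's: extend $\xi^+$ over $R[p^{-1}]$ via Proposition~\ref{prop:positive-2}, use Corollary~\ref{cor:mu-prop} together with the centrality of $\lambda$ in $M$ to identify $q_{e^{n\lambda},w} = \mu_{U_P}(w)$, then apply the Fundamental Lemma to obtain the Bruhat-triangular formula $\wt\xi^+(\tau_w) = T_w + (\text{lower})$, from which integrality, injectivity, uniqueness, and maximality all follow by the same descending-induction argument. The only cosmetic difference is that the paper packages maximality and the integrality direction together as the single claim $(\wt\xi^+)^{-1}\bigl(\Hecke_R(G)\bigr) = \Image(\Theta^P_M)$, whereas you treat the two inclusions separately.
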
 

\begin{defn} 
For arbitrary $R$ we obtain an $R$-algebra morphism
\begin{equation}\label{eq:Xi}
\Xi^P_G \coloneqq \Xi^P_{G,R} \coloneqq \id_R \otimes \bigl(\wt \xi^+ \circ
\Theta^P_{M,\Z}\bigr) \colon \Hecke_R(P) \longrightarrow \Hecke_R(G).
\end{equation}
\end{defn} 

\begin{proof}[Proof of Proposition~\ref{prop:Xi}] 
We view $\Hecke_R(M)$ (resp. $\Hecke_R(G)$) as a subalgebra of
$\Hecke_{R[p^{-1}]}(M)$ (resp. $\Hecke_{R[p^{-1}]}(G)$), which is possible by
our assumption on $R$. Let $a\in M^+$ be a strictly positive element. Then $T_a$
is invertible in $\Hecke_{R[p^{-1}]}(G)$ by \cite[Proposition~4.13
1)]{Vigneras.2016}. By Proposition~\ref{prop:positive-2} the map $\xi^+$ extends
uniquely to an $R[p^{-1}]$-algebra morphism
\[
\wt \xi^+\colon \Hecke_{R[p^{-1}]}(M) \longrightarrow \Hecke_{R[p^{-1}]}(G).
\]
Explicitly, it is given as follows: let $m\in M$, and choose $n\in\Z_{>0}$ such
that $a^nm$ is $M$-positive. Then
\begin{equation}\label{eq:xi-explicit}
\wt\xi^+\bigl(T^M_m\bigr) = T_a^{-n}\cdot \xi^+\bigl(T^M_{a^nm}\bigr) =
T_a^{-n}T_{a^nm}.
\end{equation}
It suffices to prove the following
\begin{claim*} 
The preimage of $\Hecke_R(G)$ under $\wt\xi^+$ coincides with
$\Image(\Theta^P_M)$.
\end{claim*}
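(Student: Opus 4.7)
The plan is to establish both inclusions in the claim by extracting an explicit leading-term formula for $\wt\xi^+(T^M_w)$ from the fundamental lemma. Fix $w \in W_M(1)$, pick a strictly positive central element $a \in M$, and choose $n \gg 0$ so that $a^n w \in W_{M^+}(1)$. Formula~\eqref{eq:xi-explicit} gives $\wt\xi^+(T^M_w) = T_a^{-n} T_{a^n w}$, and Lemma~\ref{lem:fundamental} applied to $(v, vw) = (a^n, a^n w)$ yields
\[
q_{a^n, w}\, T_a^{-n} T_{a^n w} - T_w \in \bigoplus_{w' < w} \Z\, T_{w'} \quad \text{in }\Hecke_\Z(G).
\]
I then compute $q_{a^n, w}$ via Corollary~\ref{cor:mu-prop}: since $a^n$ and $a^n w$ both lie in $W_{M^+}(1)$, one has $\mu_{U_P}(a^n) = \mu_{U_P}(a^n w) = 1$, so $q_{a^n, w} = \mu_{U_P}(w)\, q_{M, a^n, w}$. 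Because $a$ is central in $M$, $\langle \alpha, \nu(a)\rangle = 0$ for all $\alpha \in \Sigma_M$, forcing $\ell_M(a^n) = 0$ (so $a^n$ maps into $\Omega_M$) and therefore $\ell_M(a^n w) = \ell_M(w)$; this collapses $q_{M, a^n, w}$ to $1$. Hence $q_{a^n, w} = \mu_{U_P}(w)$ and
\[
\mu_{U_P}(w)\, \wt\xi^+(T^M_w) \in T_w + \bigoplus_{w' < w} \Z\, T_{w'} \subseteq \Hecke_R(G).
\]

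The inclusion $\Image(\Theta^P_M) \subseteq (\wt\xi^+)^{-1}(\Hecke_R(G))$ is then immediate: by Corollary~\ref{cor:Image-basis} and the hypothesis that $R$ is $p$-torsionfree, $\Image(\Theta^P_M)$ admits the $R$-basis $(\mu_{U_P}(w) T^M_w)_{w \in W_M(1)}$, and each basis vector is sent into $\Hecke_R(G)$ by the formula just obtained.

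For the reverse inclusion, take $x = \sum_w c_w T^M_w$ in $\Hecke_{R[p^{-1}]}(M)$ with $\wt\xi^+(x) \in \Hecke_R(G)$, and induct on the cardinality of $\mathrm{supp}(x)$. If $x \neq 0$, choose $w_0 \in \mathrm{supp}(x)$ maximal with respect to the Bruhat order on $W(1)$. The leading-term formula implies that the $T_{w_0}$-coefficient of $\wt\xi^+(x)$ equals $c_{w_0}/\mu_{U_P}(w_0)$: indeed, each $\wt\xi^+(T^M_w)$ for $w \in \mathrm{supp}(x) \setminus \{w_0\}$ is supported in $\{w' : w' \leq w\}$, which misses $w_0$ by maximality. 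Since $\wt\xi^+(x) \in \Hecke_R(G)$, this coefficient lies in $R$, hence $c_{w_0} \in \mu_{U_P}(w_0) R$ and $c_{w_0} T^M_{w_0} \in \Image(\Theta^P_M)$. The first inclusion then gives $\wt\xi^+(x - c_{w_0} T^M_{w_0}) \in \Hecke_R(G)$; its support is strictly smaller, so induction concludes $x \in \Image(\Theta^P_M)$.

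The main obstacle is the leading-term identity $q_{a^n, w} = \mu_{U_P}(w)$; everything else is a formal consequence. It relies crucially on the choice of $a$ \emph{central} in $M$, which makes $a^n$ length-$0$ in $W_M$ and collapses the otherwise troublesome factor $q_{M, a^n, w}$ to $1$; the remaining ratio from Corollary~\ref{cor:mu-prop} is then exactly $\mu_{U_P}(w)$, which is precisely what the basis of $\Image(\Theta^P_M)$ records.
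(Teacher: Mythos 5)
Your proof is correct and matches the paper's argument step for step: the leading-term identity $q_{n\lambda,w}=\mu_{U_P}(w)$ from Corollary~\ref{cor:mu-prop} and centrality of $a$, the Fundamental Lemma expansion $\mu_{U_P}(w)\,\wt\xi^+\bigl(T^M_w\bigr)=T_w+\sum_{w'<w}c_{w'}T_{w'}$, the forward inclusion via the $R$-basis $\bigl(\mu_{U_P}(w)T^M_w\bigr)_w$ of $\Image(\Theta^P_M)$, and the reverse inclusion by induction on the support with a leading-coefficient comparison at a Bruhat-maximal element. Your spelled-out justification that $q_{M,n\lambda,w}=1$ (centrality of $a$ forces $\ell_M(a^n)=0$, so $a^n$ is length-additive against everything) is exactly the reason the paper compresses into ``which holds because $a$ lies in the center of $M$.''
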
 

The claim implies the assertion of the proposition: given any extension
$\xi'\colon \mathcal A\to \Hecke_R(G)$ of $\xi^+\colon
\Hecke_R(M^+)\to \Hecke_R(G)$, we have $\xi' = \wt\xi^+\big|_{\mathcal A}$ by
the uniqueness of $\wt\xi^+$, and then $\mathcal A \subseteq
\Image(\Theta^P_M)$ by the claim.

We now prove the claim. By Corollary~\ref{cor:Image-basis} the family
$(\mu_{U_P}(w)T^M_w)_{w\in W_M(1)}$ is an $R$-basis of $\Image(\Theta^P_M)$. As
$a$ is strictly positive, so is the element $\lambda\coloneqq aZ_1 \in
\Lambda(1)$. Given any $w\in W_M(1)$, there exists $n\in\Z_{>0}$ such that
$e^{n\lambda}w \in W_{M^+}(1)$. Now, Corollary~\ref{cor:mu-prop} shows
\[
q_{n\lambda, w} = \frac{\mu_{U_P}(n\lambda)\mu_{U_P}(w)}{\mu_{U_P}
(e^{n\lambda}w)} \cdot q_{M,n\lambda,w} = \mu_{U_P}(w).
\] 
The second equality uses $q_{M,n\lambda}\cdot q_{M,w} = q_{M,e^{n\lambda}w}$,
that means, $q_{M,n\lambda,w} = 1$, which holds because $a$ lies in the center
of $M$. By Lemma~\ref{lem:fundamental} we have
\begin{align}
\wt\xi^+\bigl(\mu_{U_P}(w)T^M_w\bigr) &= \mu_{U_P}(w)\cdot
T^{-n}_{\lambda}T_{e^{n\lambda}w} = q_{n\lambda,w} \cdot T_{n\lambda}^{-1}\cdot
T_{e^{n\lambda}w} \notag\\
&= T_w + \sum_{w'<w}c_{w'}T_{w'} \in \Hecke_R(G), \label{eq:Xi-explicit}
\end{align}
for certain $c_{w'} \in \Z$, viewed as elements of $R$. This shows that
$\wt\xi^+$ is injective and that $\Image(\Theta^P_M)$ is contained in
$(\wt\xi^+)^{-1}(\Hecke_R(G))$.

Conversely, let $T = \sum_{i=1}^k x_i\cdot T^M_{w_i} \in \Hecke_{R[p^{-1}]} (M)$
with $x_i\in R[p^{-1}]\setminus\{0\}$ and $\wt\xi^+(T) \in \Hecke_R(G)$. We
prove $T\in \Image(\Theta^P_M)$ by induction on $k$. The case $k=0$ is trivial.
Assume $k>0$. Rearranging if necessary, we may assume that $w_k\in W_M(1)$ is
maximal among $\{w_1,\dotsc,w_k\}$ with respect to the Bruhat order in $W(1)$.
Let $n\in\Z_{>0}$ with $e^{n\lambda}w_i \in W_{M^+}(1)$ for all $i$. Then
$T^M_{n\lambda}\cdot T = \sum_{i=1}^k x_i\cdot T^M_{e^{n\lambda}w_i}$ lies in
$\Hecke_{R[p^{-1}]}(M^+)$, and hence
\[
\wt\xi^+(T) = \sum_{i=1}^k x_i\cdot T_{n\lambda}^{-1}T_{e^{n\lambda}w_i} =
\sum_{i=1}^k x_iq_{n\lambda,w_i}^{-1}\cdot \Bigl(T_{w_i} + \sum_{w'_i < w_i}
c_{w'_i} T_{w'_i}\Bigr) \in \Hecke_R(G).
\]
Again, we have $q_{n\lambda,w_i} = \mu_{U_P}(w_i)$. Hence, maximality of $w_k$
implies $x_k \in R.\mu_{U_P}(w_k)$, whence $x_kT_{w_k} \in \Image(\Theta^P_M)$.
By the induction hypothesis we have $T - x_kT^M_{w_k} \in \Image(\Theta^P_M)$.
We conclude $T\in \Image(\Theta^P_M)$, finishing the proof.
\end{proof} 

We note the following useful consequence of the proof of
Proposition~\ref{prop:Xi}.
\begin{cor}\label{cor:Xi} 
Let $a\in M$ be strictly positive and $g\in P$ arbitrary. Then
\[
T_{a^n}\cdot \Xi^P_G(T^P_g) = \nu_M(g)\mu_{U_P}(g)\cdot T_{a^ng_M},\qquad
\text{in $\Hecke_R(G)$}
\]
whenever $n\in\Z_{>0}$ is such that $a^ng_M \in M^+$.
\end{cor}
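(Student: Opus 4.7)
My plan is to prove the corollary by simply unfolding the definition of $\Xi^P_G$ and combining the explicit formulas for $\Theta^P_M$ and $\wt\xi^+$ that are already available from the preceding results.

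First, I would recall that $\Xi^P_G = \id_R \otimes (\wt\xi^+ \circ \Theta^P_{M,\Z})$. Since Proposition~\ref{prop:Theta} gives $\Theta^P_M(T^P_g) = \nu_M(g)\mu_{U_P}(g)\, T^M_{g_M}$, this produces the identity
\[
\Xi^P_G(T^P_g) = \nu_M(g)\mu_{U_P}(g)\cdot \wt\xi^+\bigl(T^M_{g_M}\bigr),
\]
which is a genuine equality in $\Hecke_R(G)$ (viewed, if necessary, inside $\Hecke_{R[p^{-1}]}(G)$, as in the proof of Proposition~\ref{prop:Xi}).

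Next, I would invoke the explicit description \eqref{eq:xi-explicit} of $\wt\xi^+$: the hypothesis $a^n g_M \in M^+$ is exactly what is needed to apply that formula with $m = g_M$, yielding
\[
\wt\xi^+\bigl(T^M_{g_M}\bigr) = T_a^{-n}\cdot T_{a^n g_M}.
\]
Multiplying through by $T_{a^n}$ on the left and cancelling $T_{a^n}\cdot T_a^{-n}$ delivers the desired formula.

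The only step that requires a comment is the identity $T_{a^n} = T_a^n$ inside $\Hecke_R(G)$, which is used implicitly throughout the proof of Proposition~\ref{prop:Xi} (compare the two equivalent expressions $T_\lambda^{-n}T_{e^{n\lambda}w}$ and $T_{n\lambda}^{-1}T_{e^{n\lambda}w}$ appearing there). I would justify it as follows: $a$ is central in $M$ and strictly positive, so $a, a^2, \dotsc, a^n$ all lie in $M^+ = M^{+,G}$, and by Corollary~\ref{cor:Abe-1} applied iteratively we have $q_{a^j,a} = q_{M,a^j,a}$; since $a$ is central in $M$, the factor $q_{M,a^j,a}$ equals $1$ (the length of a translation by a central cocharacter is additive in the exponent, as already used at the analogous point in the proof of Proposition~\ref{prop:Xi}). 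Consequently $T_a\cdot T_{a^j} = T_{a^{j+1}}$ for every $j$, and $T_a^n = T_{a^n}$ follows by induction. With this identity in hand the manipulation above completes the proof; no further calculation is needed.
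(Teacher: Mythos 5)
Your proof is correct and takes essentially the same route as the paper: the paper also reduces to $R=\Z$, applies Proposition~\ref{prop:Theta} to compute $\Theta^P_{M,\Z}(T^P_g) = \nu_M(g)\mu_{U_P}(g)\,T^M_{g_M}$, and then invokes the explicit formula~\eqref{eq:xi-explicit} for $\wt\xi^+$. The one point you flag that the paper leaves entirely implicit is the identity $T_{a^n} = T_a^n$ in $\Hecke_{\Z}(G)$, which is needed to cancel $T_{a^n}\cdot T_a^{-n}$; your justification via Corollary~\ref{cor:Abe-1} and the vanishing of $q_{M,a^j,a}$ (because $a$ is central in $M$, so the translations $e^{a^jZ_1}$ have $\ell_M = 0$ and lie in $\Omega_M(1)$) is correct. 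An even more direct route is to observe that $\nu(a)$ is $G$-anti-dominant, so $\ell(e^{a^jZ_1})$ is additive in $j$ and the braid relation applies. One small wording issue: when $R$ has $p$-torsion, $\Hecke_R(G)$ does \emph{not} embed into $\Hecke_{R[p^{-1}]}(G)$, so the phrase ``viewed, if necessary, inside $\Hecke_{R[p^{-1}]}(G)$'' should rather be ``proved first over $\Z$ (with intermediate computations in $\Hecke_{\Z[p^{-1}]}(G)$) and then base-changed to $R$,'' which is exactly what the paper does and what your argument in fact accomplishes.
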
 
\begin{proof} 
The assertion follows by extension of scalars from the case $R = \Z$. Thus, it
suffices to prove
\[
T_{a^n}\cdot \wt\xi^+\bigl(\Theta^P_{M,\Z}(T^P_g)\bigr) =
\nu_M(g)\mu_{U_P}(g)\cdot T_{a^ng_M},\qquad \text{in $\Hecke_{\Z}(G)$},
\]
where the computation takes place in $\Hecke_{\Z[p^{-1}]}(G)$.
But this is clear from \eqref{eq:xi-explicit} and the fact that
$\Theta^P_{M,\Z}(T^P_g) = \nu_M(g)\mu_{U_P}(g)\cdot T^M_{g_M}$. 
\end{proof} 
\subsubsection{Equivalence of parabolic inductions} 
Having constructed two morphisms $\Theta^P_M\colon \Hecke_R(P) \to
\Hecke_R(M)$ and $\Xi^P_G\colon \Hecke_R(P)\to \Hecke_R(G)$, we obtain a
functor
\begin{equation}\label{eq:parabolic-induction-2}
\Mod{\Hecke_R(M)} \longrightarrow \Mod{\Hecke_R(G)},\quad \module m \longmapsto
\module m\otimes_{\Hecke_R(P)} \Hecke_R(G)
\end{equation}
from the category of right $\Hecke_R(M)$-modules to the category of right
$\Hecke_R(G)$-modules by viewing $\module m$ via $\Theta^P_M$ as a right
$\Hecke_R(P)$-module and then extending scalars along $\Xi^P_G$.
There is also the parabolic induction, due to
\cite[(4.2)]{Ollivier-Vigneras.2018}, 
\begin{equation}\label{eq:parabolic-induction-3}
\Mod{\Hecke_R(M)} \longrightarrow \Mod{\Hecke_R(G)}, \quad \module m\longmapsto
\module m\otimes_{\Hecke_R(M^+)} \Hecke_R(G),
\end{equation}
given by viewing $\module m$ as a right $\Hecke_R(M^+)$-module and extending
scalars along the $R$-algebra morphism $\xi^+\colon \Hecke_R(M^+)\to \Hecke_R(G)$
(Proposition~\ref{prop:positive-1}). The next theorem is an easy consequence of
the construction of $\Xi^P_G$. 

\begin{thm}\label{thm:parabolic-induction} 
The functors \eqref{eq:parabolic-induction-2} and
\eqref{eq:parabolic-induction-3} are canonically isomorphic.
\end{thm}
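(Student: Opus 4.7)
The plan is to establish a natural isomorphism $\phi_{\module m}\colon \module m\otimes_{\Hecke_R(M^+)} \Hecke_R(G) \xrightarrow{\sim} \module m\otimes_{\Hecke_R(P)} \Hecke_R(G)$, given on elementary tensors by $m\otimes g \mapsto m\otimes g$, together with its inverse $\psi_{\module m}$ given by the same formula. Naturality in $\module m$ will be immediate from these formulas, so the content reduces to well-definedness in both directions. For $\phi_{\module m}$, the key observation is that for every $x\in M^+$ the element $T^P_x\in \Hecke_R(P)$ satisfies $\Theta^P_M(T^P_x) = T^M_x$ (since $\nu_M(x) = \mu_{U_P}(x) = 1$) and $\Xi^P_G(T^P_x) = T_x = \xi^+(T^M_x)$, as follows by unwinding the definition \eqref{eq:Xi} and tracing through Proposition~\ref{prop:Xi} applied over $\Z$. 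Thus the defining relations of the $\Hecke_R(M^+)$-tensor product become special cases of those of the $\Hecke_R(P)$-tensor product with $h = T^P_x$, and $\phi_{\module m}$ descends to a well-defined $R$-linear map.

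The main obstacle is the well-definedness of $\psi_{\module m}$: for each $x\in P$, the relation
$$m\cdot \nu_M(x)\mu_{U_P}(x)\,T^M_{x_M} \otimes g \;=\; m\otimes \Xi^P_G(T^P_x)\,g$$
must already hold in $\module m\otimes_{\Hecke_R(M^+)} \Hecke_R(G)$. The plan is as follows. Fix a strictly positive central element $a\in M$ and choose $n \gg 0$ with $a^n x_M \in M^+$. Then $T^M_{a^n}$ is invertible in $\Hecke_R(M)$ via the localization $\Hecke_R(M) = \Hecke_R(M^+)[(T^M_a)^{-1}]$ used in Proposition~\ref{prop:positive-2}, and $T^M_{a^n}T^M_{x_M} = T^M_{a^n x_M}$ in $\Hecke_R(M)$ because $q_{M,e^{n\lambda},x_M} = 1$ (cf.\ the proof of Proposition~\ref{prop:Xi}). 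Writing $m = m(T^M_{a^n})^{-1}\cdot T^M_{a^n}$, pushing the factor $T^M_{a^n x_M}\in \Hecke_R(M^+)$ across the tensor via $\xi^+$, then applying Corollary~\ref{cor:Xi} in the form $\nu_M(x)\mu_{U_P}(x)\,T_{a^n x_M} = T_{a^n}\cdot \Xi^P_G(T^P_x)$, and finally pulling $T^M_{a^n}$ back across the tensor (again via $\xi^+$) yields the required equality.

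Once both maps are shown to be well-defined, they are mutually inverse by construction, since each acts as the identity on elementary tensors; naturality in $\module m$ is visible from the formulas. The main difficulty lies entirely in the verification for $\psi_{\module m}$, where the careful construction of $\Xi^P_G$ — specifically the way Corollary~\ref{cor:Xi} records the value of $\Xi^P_G(T^P_g)$ for arbitrary $g\in P$ after multiplication by a strictly positive central $T_{a^n}$ — enters essentially. In particular, the $p$-torsion phenomena that complicate the structure of $\Image(\Theta^P_M)$ are rendered inconsequential for the comparison of functors, because the computation is carried out inside $\Hecke_R(M)$ itself using only the invertibility of $T^M_{a^n}$ and the compatibility $\xi^+(T^M_{a^n x_M}) = T_{a^n x_M}$.
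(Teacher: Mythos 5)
Your proof is correct and follows essentially the same route as the paper: the paper phrases the two well-definedness checks as an equivalence of conditions on $R$-bilinear maps $\rho\colon\module m\times\Hecke_R(G)\to\module n$ (the universal property of the two tensor products), while you construct the mutually inverse maps on elementary tensors directly, but the underlying computation — inserting $(T^M_{a^n})^{-1}T^M_{a^n}$, pushing $T^M_{a^n g_M}$ across the tensor, invoking Corollary~\ref{cor:Xi}, and pulling $T^M_{a^n}$ back — is identical.
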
 
\begin{proof} 
Let $\module n$ be a right $\Hecke_R(G)$-module and let 
\[
\rho\colon \module m \times \Hecke_R(G) \longrightarrow \module n
\]
be an $R$-bilinear map satisfying $\rho(m,TT') = \rho(m,T)T'$ for all $m\in
\module m$ and $T,T'\in \Hecke_R(G)$. The assertion of the theorem is then
tantamount with the equivalence of the following two properties:
\begin{enumerate}[label=(\roman*)]
\item\label{thm:parind-i} $\rho(mT^M, T) = \rho\bigl(m, \xi^+(T^M)T\bigr)$ for
all $m\in \module m$, $T^M\in \Hecke_R(M^+)$, and $T\in \Hecke_R(G)$.

\item\label{thm:parind-ii} $\rho\bigl(m\Theta^P_M(T^P), T\bigr) = \rho\bigl(m,
\Xi^P_G(T^P)T\bigr)$ for all $m\in \module m$, $T^P\in \Hecke_R(P)$, and $T\in
\Hecke_R(G)$.
\end{enumerate}

Given $m\in M^+$, we have $\Theta^P_M(T^P_m) = T^M_m$ and $\Xi^P_G(T^P_m) =
\xi^+(T^M_m)$. Thus, \ref{thm:parind-ii} implies \ref{thm:parind-i}.

Conversely, assume \ref{thm:parind-i} and fix a strictly positive element $a\in
M$. Let $g\in P$ and choose
$n\in\Z_{>0}$ such that $a^ng_M\in M^+$. Then
\begin{align*}
\rho\bigl(m\cdot \Theta^P_M(T^P_g), T\bigr) &= \rho\bigl(m\cdot
\nu_M(g)\mu_{U_P}(g)\cdot T^M_{g_M}, T\bigr)\\
&= \rho\bigl(m\cdot
(T^M_{a^n})^{-1}\cdot \nu_M(g)\mu_{U_P}(g)T^M_{a^ng_M}, T\bigr)\\
&= \rho\bigl(m\cdot (T^M_{a^n})^{-1},
\nu_M(g)\mu_{U_P}(g)T_{a^ng_M}\cdot T\bigr) && \text{(by \ref{thm:parind-i})}\\
&= \rho\bigl(m\cdot (T^M_{a^n})^{-1}, T_{a^n}\cdot \Xi^P_G(T^P_g)\cdot T\bigr)
&& \text{(by Corollary~\ref{cor:Xi})}\\
&= \rho\bigl(m, \Xi^P_G(T^P_g)\cdot T\bigr) & & \text{(by \ref{thm:parind-i})}
\end{align*}
keeping in mind $\xi^+(T^M_m) = T_m$, for all $m\in M^+$. Hence,
\ref{thm:parind-i} implies \ref{thm:parind-ii}.
\end{proof} 

\section{Transitivity of parabolic induction}\label{sec:transitivity} 
We observe that only a proper quotient of the parabolic pro-$p$ Iwahori--Hecke
algebra $\Hecke_R(P)$ affects the parabolic induction functor: both morphisms
$\Theta^P_M$ and $\Xi^P_G$ factor through
\[
R\otimes \Image(\Theta^P_{M,\Z}).
\]
This suggests to study this algebra.

\subsection{Definitions and compatibilities}\label{subsec:definitions} 
\begin{defn} 
We put $\Hecke_R(M,G) \coloneqq R\otimes \Image(\Theta^P_{M,\Z})$. Given $w\in
W_M(1)$, we define
\[
\tau^{M,G}_w \coloneqq 1\otimes \mu_{U_P}(w) T^M_w \in \Hecke_R(M,G).
\]
From Corollary~\ref{cor:Image-basis} it follows that
$\bigl(\tau^{M,G}_w\bigr)_{w\in W_M(1)}$ is an $R$-basis of $\Hecke_R(M,G)$.
Finally, write
\begin{align*}
\theta^{M,G}_M\colon \Hecke_R(M,G) &\longrightarrow \Hecke_R(M),\quad
\text{and}\\
\xi^G_{G,M}\colon \Hecke_R(M,G) & \longrightarrow \Hecke_R(G)
\end{align*}
for the maps induced by $\Theta^P_M$ and $\Xi^P_G$, respectively.
\end{defn} 

\begin{rmk} 
\begin{enumerate}[label=(\alph*)]
\item Although not explicit in the notation, the algebra $\Hecke_R(M,G)$ depends
on $\alg P$. However, in our context $\alg M$ and $\alg P$ determine each other
so that no confusion will arise. 

\item Notice that $\Hecke_R(G,G) = \Hecke_R(G)$.

\item The computation \eqref{eq:Xi-explicit} actually shows
\begin{equation}\label{eq:xi-explicit-2}
\xi^G_{G,M}\bigl(\tau^{M,G}_w\bigr) = T_w + \sum_{w'<w} c_{w'}T_{w'} \in
\Hecke_R(G),
\end{equation}
for all $w\in W_M(1)$. In particular, $\xi^G_{G,M}$ is injective.
\end{enumerate}
\end{rmk} 

\begin{lem}\label{lem:braid} 
Let $v,w\in W_{M}(1)$ with $q_{v,w} = 1$. Then
$\tau^{M,G}_v\cdot \tau^{M,G}_w = \tau^{M,G}_{vw}$.
\end{lem}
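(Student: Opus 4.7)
The plan is to reduce the claim to two well-known facts: the braid relation in $\Hecke_R(M)$ and the factorization formula from Corollary~\ref{cor:mu-prop}. First, I would unpack the hypothesis $q_{v,w}=1$. Corollary~\ref{cor:mu-prop} gives
\begin{equation*}
1 = q_{v,w} = \frac{\mu_{U_P}(v)\mu_{U_P}(w)}{\mu_{U_P}(vw)}\cdot q_{M,v,w},
\end{equation*}
and both factors on the right are elements of $q^{\Z_{\ge 0}}$ (the first by the inequality part of Corollary~\ref{cor:mu-prop}, the second by definition). Since a product of nonnegative powers of $q$ equals $1$ only if each factor equals $1$, we extract
\begin{equation*}
\mu_{U_P}(vw) = \mu_{U_P}(v)\mu_{U_P}(w) \quad \text{and} \quad q_{M,v,w} = 1.
\end{equation*}
The second equality is equivalent to $\ell_M(vw) = \ell_M(v) + \ell_M(w)$, so the braid relation in $\Hecke_R(M)$ gives $T^M_v \cdot T^M_w = T^M_{vw}$.

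Next I would compute the product inside $\Image(\Theta^P_{M,\Z})$, viewed as a subring of $\Hecke_\Z(M)$:
\begin{equation*}
\bigl(\mu_{U_P}(v)T^M_v\bigr)\cdot\bigl(\mu_{U_P}(w)T^M_w\bigr) = \mu_{U_P}(v)\mu_{U_P}(w)\cdot T^M_{vw} = \mu_{U_P}(vw)\cdot T^M_{vw}.
\end{equation*}
Applying the functor $R\otimes-$ and using the definition of the basis element $\tau^{M,G}_{w} = 1\otimes \mu_{U_P}(w)T^M_w$ then yields $\tau^{M,G}_v\cdot \tau^{M,G}_w = \tau^{M,G}_{vw}$ in $\Hecke_R(M,G)$, as desired.

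I do not anticipate a real obstacle: the argument is essentially bookkeeping, and the only conceptual ingredient is the factorization of $q_{v,w}$ from Corollary~\ref{cor:mu-prop}, which was set up precisely for this kind of computation. The mildly subtle point to keep in mind is that, for general $R$, the canonical map $R\otimes \Image(\Theta^P_{M,\Z}) \to \Hecke_R(M)$ may fail to be injective; but this is not an issue here, because the multiplication on $\Hecke_R(M,G)$ is by definition obtained from that on $\Image(\Theta^P_{M,\Z}) \subseteq \Hecke_\Z(M)$ by extension of scalars, so the computation may be carried out unambiguously at the level of $\Z$.
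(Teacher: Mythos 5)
Your proof is correct and takes essentially the same route as the paper: reduce to $R=\Z$, use Corollary~\ref{cor:mu-prop} to extract $\mu_{U_P}(v)\mu_{U_P}(w)=\mu_{U_P}(vw)$ and $q_{M,v,w}=1$ from the hypothesis $q_{v,w}=1$, then apply the braid relation $T^M_vT^M_w=T^M_{vw}$ in $\Hecke_\Z(M)$. The paper simply states the two consequences of Corollary~\ref{cor:mu-prop} without spelling out the ``each factor must be $1$'' step, which you make explicit — but the argument is identical in substance.
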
 
\begin{proof} 
We may assume $R = \Z$. Corollary~\ref{cor:mu-prop} shows
$\mu_{U_P}(v)\cdot\mu_{U_P}(w) = \mu_{U_P}(vw)$ and $q_{M,v,w} = 1$.
Hence, $\tau^{M,G}_v\cdot \tau^{M,G}_w = \mu_{U_P}(v)\mu_{U_P}(w)\cdot
T^M_vT^M_w = \mu_{U_P}(vw)\cdot T^M_{vw} = \tau^{M,G}_{vw}$.
\end{proof} 

\subsubsection{The morphisms \texorpdfstring{$\theta^{L,G}_M$}
{theta(L,G)M}}
\begin{lem}\label{lem:thetaLG} 
Let $\alg M\subseteq \alg L$ be Levi subgroups in $\alg G$. The map
\begin{equation}\label{eq:thetaLG}
\theta^{L,G}_M \colon \Hecke_R(M,G) \longrightarrow \Hecke_R(M,L),\quad
\tau^{M,G}_w \longmapsto \mu_{U_{P_L}}(w)\cdot \tau^{M,L}_w
\end{equation}
is a morphism of $R$-algebras.
Given another Levi subgroup $\alg L'$ containing $\alg L$, the diagram
\[
\begin{tikzcd}
\Hecke_R(M,G) \ar[r,"\theta^{L',G}_M"] \ar[dr,"\theta^{L,G}_M"'] &
\Hecke_R(M,L') \ar[d,"\theta^{L,L'}_M"]\\
& \Hecke_R(M,L)
\end{tikzcd}
\]
commutes, \ie $\theta^{L,G}_M = \theta^{L,L'}_M\circ
\theta^{L',G}_M$.
\end{lem}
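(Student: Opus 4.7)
The strategy is to reduce to $R=\Z$ and realize $\theta^{L,G}_M$ as the scalar extension of an inclusion of $\Z$-subalgebras of $\Hecke_\Z(M)$. The crucial input is Proposition~\ref{prop:changing} applied with $P = P_M$ and $Q = P_L$: since $P_M\cap L$ is the standard parabolic of $L$ with Levi $M$ and unipotent radical $U_{P_M}\cap L$, it yields for every $w\in W_M(1)$ the multiplicativity identity
\[
\mu_{U_{P_M}}(w) = \mu_{U_{P_M\cap L}}(w)\cdot \mu_{U_{P_L}}(w).
\]

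Multiplying by $T^M_w$ rewrites this inside $\Hecke_\Z(M)$ as
\[
\mu_{U_{P_M}}(w) T^M_w = \mu_{U_{P_L}}(w)\cdot \bigl(\mu_{U_{P_M\cap L}}(w) T^M_w\bigr).
\]
Its right-hand side lies in the subalgebra $\Hecke_\Z(M,L) = \Image(\Theta^{P_M\cap L}_{M,\Z})$, while its left-hand side runs through a $\Z$-basis of $\Hecke_\Z(M,G) = \Image(\Theta^{P_M}_{M,\Z})$ as $w$ varies over $W_M(1)$ (Corollary~\ref{cor:Image-basis}). Hence $\Hecke_\Z(M,G)\subseteq \Hecke_\Z(M,L)$, and the inclusion is the sought $\Z$-algebra morphism $\theta^{L,G}_{M,\Z}$ sending $\tau^{M,G}_w\mapsto \mu_{U_{P_L}}(w)\tau^{M,L}_w$. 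Setting $\theta^{L,G}_M \coloneqq \id_R\otimes \theta^{L,G}_{M,\Z}$ gives the map \eqref{eq:thetaLG} for arbitrary $R$.

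For the commutative diagram, apply Proposition~\ref{prop:changing} once more, this time to the chain $\alg L\subseteq \alg L'\subseteq \alg G$ (with $P = P_L$ and $Q = P_{L'}$), to obtain
\[
\mu_{U_{P_L}}(w) = \mu_{U_{P_L\cap L'}}(w)\cdot \mu_{U_{P_{L'}}}(w).
\]
Both $\theta^{L,G}_M$ and $\theta^{L,L'}_M\circ \theta^{L',G}_M$ are $R$-linear, so it suffices to verify equality on the basis $(\tau^{M,G}_w)_{w\in W_M(1)}$. By construction,
\[
\theta^{L,L'}_M\bigl(\theta^{L',G}_M(\tau^{M,G}_w)\bigr) = \mu_{U_{P_{L'}}}(w)\mu_{U_{P_L\cap L'}}(w)\cdot \tau^{M,L}_w = \mu_{U_{P_L}}(w)\tau^{M,L}_w = \theta^{L,G}_M(\tau^{M,G}_w),
\]
as desired. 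There is no substantive obstacle in this argument; the entire proof is pure bookkeeping once the multiplicativity of $\mu_{U_?}$ under nested parabolics (Proposition~\ref{prop:changing}) has been identified as the correct input.
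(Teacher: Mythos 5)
Your proof is correct and takes essentially the same route as the paper's: reduce to $R=\Z$, use Proposition~\ref{prop:changing} to rewrite $\tau^{M,G}_w = \mu_{U_{P_L}}(w)\tau^{M,L}_w$, conclude that $\theta^{L,G}_{M,\Z}$ is the inclusion $\Hecke_\Z(M,G)\hookrightarrow \Hecke_\Z(M,L)$ (hence automatically an algebra morphism), base-change to $R$, and verify the triangle on the basis $(\tau^{M,G}_w)_w$ by a second application of Proposition~\ref{prop:changing} to the chain $\alg L\subseteq \alg L'\subseteq \alg G$. The only cosmetic difference is that you spell out the scalar-extension step $\theta^{L,G}_M = \id_R\otimes\theta^{L,G}_{M,\Z}$ more explicitly, whereas the paper merely says ``we may assume $R=\Z$''.
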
 
\begin{proof} 
We may assume $R = \Z$. By Proposition~\ref{prop:changing} we compute
\[
\tau^{M,G}_w = \mu_{U_P}(w)T^M_w = \mu_{U_{P_L}}(w)\cdot \mu_{U_P\cap L}(w)T^M_w
= \mu_{U_{P_L}}(w)\cdot \tau^{M,L}_w,
\]
for all $w\in W_M(1)$. Hence, $\theta^{L,G}_M$ is the inclusion map and, in
particular, a morphism of $\Z$-algebras. If $\alg L'$ is another Levi subgroup
containing $\alg L$, then
\begin{align*}
\bigl(\theta^{L,L'}_M\circ \theta^{L',G}_M\bigr)\bigl(\tau^{M,G}_w\bigr) 
&= \theta^{L,L'}_M\bigl(\mu_{U_{P_{L'}} }(w)\tau^{M,L'}_w\bigr)
= \mu_{U_{P_{L'} }}(w)\mu_{U_{P_L}\cap L'}(w)\cdot \tau^{M,L}_w\\
&= \mu_{U_{P_L}}(w)\cdot \tau^{M,L}_w = \theta^{L,G}_M\bigl(\tau^{M,G}_w\bigr),
\end{align*}
for all $w\in W_M(1)$, again by Proposition~\ref{prop:changing}.
\end{proof} 

\begin{prop}\label{prop:localization} 
Let $\alg M\subseteq \alg L$ be Levi subgroups in $\alg G$. Let $\lambda\in
\Lambda(1)$ be a strictly $L$-positive element. Then
\[
\Hecke_R(M,L) \cong \Hecke_R(M,G)\bigl[(\tau^{M,G}_{\lambda})^{-1}\bigr]
\]
and $\theta^{L,G}_M\colon \Hecke_R(M,G)\to \Hecke_R(M,L)$ is the localization
morphism.
\end{prop}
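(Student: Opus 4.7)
The plan is to identify $\Hecke_R(M,L)$ with the universal localization of $\Hecke_R(M,G)$ at the central element $\tau^{M,G}_\lambda$. The key inputs are that $\tau^{M,G}_\lambda$ coincides with $T^M_\lambda$ and with $\tau^{M,L}_\lambda$, and a ``straightening'' identity that lets one absorb any basis element of $\Hecke_R(M,L)$ into $\Hecke_R(M,G)$ after multiplication by a large power of $\tau^{M,G}_\lambda$.

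First I would pin down $\tau^{M,G}_\lambda$ and its unit structure. Writing $\lambda = aZ_1$ with $a\in Z(L)\subseteq Z(M)$, strict $L$-positivity gives $\langle\alpha,\nu(\lambda)\rangle=0$ for $\alpha\in\Sigma_L$ and $\langle\alpha,\nu(\lambda)\rangle<0$ for $\alpha\in\Sigma^+\setminus\Sigma_L$; combined with Lemma~\ref{lem:mu-prod} and Proposition~\ref{prop:changing} this forces $\mu_{U_P}(\pm\lambda) = \mu_{U_{P\cap L}}(\pm\lambda) = \mu_{U_{P_L}}(\pm\lambda) = 1$. Hence $\tau^{M,G}_\lambda = \tau^{M,L}_\lambda = T^M_\lambda$ and $\tau^{M,L}_{-\lambda} = T^M_{-\lambda}$. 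Because the image of $\nu(\lambda)$ in $V_M$ vanishes, $\ell_M(\lambda)=0$; and since $a$ is central in $M$, $e^\lambda$ is central in $W_M(1)$. Consequently $T^M_\lambda$ is central in $\Hecke_R(M)$ and $T^M_\lambda\cdot T^M_{-\lambda} = 1$, so $\tau^{M,G}_\lambda$ is a central unit in $\Hecke_R(M,L)$. The universal property of localization then yields an $R$-algebra morphism
\[
\phi\colon \Hecke_R(M,G)\bigl[(\tau^{M,G}_\lambda)^{-1}\bigr]\longrightarrow \Hecke_R(M,L).
\]

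The heart of the argument is the straightening identity
\[
(\tau^{M,G}_\lambda)^n\cdot \tau^{M,G}_w = \mu_{U_{P_L}}(w)\cdot \tau^{M,G}_{e^{n\lambda}w}\qquad \text{in } \Hecke_R(M,G),
\]
valid for every $w\in W_M(1)$ and every $n$ large enough that $\langle \alpha, \nu(n\lambda + \mu)\rangle < 0$ for all $\alpha\in\Sigma^+\setminus\Sigma_L$, where $w = e^{\mu}w_0$. Both sides reduce to scalar multiples of $T^M_{e^{n\lambda}w}$; the scalars are matched using Lemma~\ref{lem:mu-prod} together with $\mu_{U_{P_L}}(e^{n\lambda}w) = 1$ (strict $L$-positivity) and $\mu_{U_{P\cap L}}(e^{n\lambda}w) = \mu_{U_{P\cap L}}(w)$ (since $\langle \alpha, \nu(\lambda)\rangle = 0$ on $\Sigma_L$), and are then combined via Proposition~\ref{prop:changing}. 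Dividing by $\mu_{U_{P_L}}(w)$ after inverting $\tau^{M,G}_\lambda$ produces
\[
\tau^{M,L}_w = (\tau^{M,G}_\lambda)^{-n}\cdot \tau^{M,G}_{e^{n\lambda}w}\qquad \text{in } \Hecke_R(M,L),
\]
proving that $\phi$ is surjective.

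Finally for injectivity, $\theta^{L,G}_M$ is diagonal with respect to the given $R$-bases, $\tau^{M,G}_w\mapsto \mu_{U_{P_L}}(w)\tau^{M,L}_w$, so any $T = \sum c_w\tau^{M,G}_w \in \ker \theta^{L,G}_M$ satisfies $c_w\mu_{U_{P_L}}(w) = 0$ in $R$ for every $w$ in its (finite) support. Choosing $n$ uniformly large on this support, the straightening identity gives $(\tau^{M,G}_\lambda)^n T = \sum c_w\mu_{U_{P_L}}(w)\tau^{M,G}_{e^{n\lambda}w} = 0$ in $\Hecke_R(M,G)$; so $T$ is $\tau^{M,G}_\lambda$-torsion and vanishes in the localization, whence $\phi$ is injective. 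I expect the main obstacle to be the careful bookkeeping in the straightening identity—simultaneously controlling all three of $\mu_{U_P}$, $\mu_{U_{P\cap L}}$, and $\mu_{U_{P_L}}$ on both $w$ and $e^{n\lambda}w$, and verifying that the scalar factors match exactly on the nose rather than merely up to a unit.
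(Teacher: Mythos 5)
Your proof is correct and follows essentially the same route as the paper's: the same identifications $\mu_{U_P}(\lambda)=\mu_{U_{P\cap L}}(\lambda)=\mu_{U_{P_L}}(\lambda)=1$, the same central-unit observation, the same straightening identity derived from Lemma~\ref{lem:mu-prod}, Proposition~\ref{prop:changing}, and Lemma~\ref{lem:braid}, and the same use of the diagonal $R$-basis description of $\theta^{L,G}_M$. The only cosmetic difference is organizational: the paper writes down an explicit $R$-linear inverse $\gamma$ and checks both composites, whereas you obtain $\phi$ from the universal property and then argue surjectivity and injectivity (the latter by showing $\ker\theta^{L,G}_M$ is $\tau^{M,G}_\lambda$-torsion), which amounts to the same computations.
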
 
\begin{proof} 
Notice that $\tau^{M,G}_\lambda$ lies in the center of $\Hecke_R(M,G)$, since
$\lambda$ is lifted by a central element in $L$ (and hence in $M$). Also
$\theta^{L,G}_M(\tau^{M,G}_{n\lambda}) = \tau^{M,L}_{n\lambda}$ is central and
invertible in $\Hecke_R(M,L)$ for each $n\in\Z_{>0}$. Hence, $\theta^{L,G}_M$
induces a well-defined $R$-algebra morphism
\[
\wt\theta^{L,G}_M\colon \Hecke_R(M,G)\bigl[(\tau^{M,G}_{\lambda})^{-1}\bigr]
\longrightarrow \Hecke_R(M,L).
\]
It suffices to construct an $R$-linear inverse. Let $w\in W_{M}(1)$. Choose
$n\in\Z_{>0}$ such that $e^{n\lambda}w\in W_{L^+}(1)$. As $n\lambda$ is lifted
by a central element of $L$, we have $q_{L,n\lambda,w} = 1$. Hence,
Lemma~\ref{lem:braid} shows
\begin{equation}\label{eq:localization-1}
\tau^{M,L}_w = \tau^{M,L}_{-n\lambda}\cdot \tau^{M,L}_{n\lambda}\cdot
\tau^{M,L}_w = \tau^{M,L}_{-n\lambda}\cdot \tau^{M,L}_{e^{n\lambda}w} =
\tau^{M,L}_{-n\lambda}\cdot
\theta^{L,G}_M\bigl(\tau^{M,G}_{e^{n\lambda}w}\bigr).
\end{equation}
Hence, we obtain an $R$-linear map
\[
\gamma\colon \Hecke_R(M,L) \longrightarrow
\Hecke_R(M,G)\bigl[(\tau^{M,G}_{\lambda})^{-1}\bigr],\quad \tau^{M,L}_w
\longmapsto \frac{\tau^{M,G}_{e^{n\lambda}w}}{\tau^{M,G}_{n\lambda}},
\]
which does not depend on the choice of $n$. By \eqref{eq:localization-1} we have
$\wt \theta^{L,G}_M\circ \gamma = \id_{\Hecke_R(M,L)}$. Conversely, let $w\in
W_M(1)$ and $n\in\Z_{>0}$. Take $m\in\Z_{>0}$ with $e^{m\lambda}w \in
W_{L^+}(1)$. As $m\lambda$ is lifted by a central element in $L$, we have
$\mu_{U_P\cap L}(e^{m\lambda}w) = \mu_{U_P\cap L}(w)$. Applying
Proposition~\ref{prop:changing} twice, we compute
\begin{align*}
\mu_{U_{P_L}}(w)\cdot \mu_{U_P}(e^{m\lambda}w) &= \mu_{U_{P_L}}(w)\cdot
\mu_{U_P\cap L}(e^{m\lambda}w)\cdot \mu_{U_{P_L}}(e^{m\lambda}w)\\
&= \mu_{U_{P_L}}(w)\cdot \mu_{U_P\cap L}(w) = \mu_{U_P}(w).
\end{align*}
This shows $\tau^{M,G}_{m\lambda}\cdot \tau^{M,G}_w = \mu_{U_{P_L}}(w)\cdot
\tau^{M,G}_{e^{m\lambda}w}$. Now,
\begin{align*}
(\gamma\circ \wt\theta^{L,G}_M)\left(\frac{\tau^{M,G}_w}{\tau^{M,G}_{n\lambda}}
\right) 
&= \gamma\bigl(\tau^{M,L}_{-n\lambda}\cdot \theta^{L,G}_M(\tau^{M,G}_w)
\bigr) = \mu_{U_{P_L}}(w)\cdot \gamma\bigl(\tau^{M,L}_{-n\lambda}\cdot
\tau^{M,L}_w\bigr)
\\
&= \mu_{U_{P_L}}(w)\cdot \gamma\bigl(\tau^{M,L}_{e^{-n\lambda}w}\bigr) =
\frac{\mu_{U_{P_L}}(w)\cdot \tau^{M,G}_{e^{m\lambda}w}}
{\tau^{M,G}_{(n+m)\lambda}}
= \frac{\tau^{M,G}_{m\lambda}\cdot \tau^{M,G}_w}{\tau^{M,G}_{m\lambda}\cdot
\tau^{M,G}_{n\lambda}} = \frac{\tau^{M,G}_w}{\tau^{M,G}_{n\lambda}}.
\end{align*}
Hence, $\gamma\circ \wt\theta^{L,G}_M =
\id_{\Hecke_R(M,G)[(\tau^{M,G}_{\lambda})^{-1}]}$ finishing the proof.
\end{proof} 

\subsubsection{The morphisms \texorpdfstring{$\xi^G_{L,M}$}{xiG(L,M)}} 
\begin{lem}\label{lem:xiLM} 
Let $\alg M\subseteq \alg L$ be Levi subgroups in $\alg G$. There exists a
unique $R$-algebra morphism
\[
\xi^G_{L,M}\colon \Hecke_R(M,G) \longrightarrow \Hecke_R(L,G)
\]
which is natural in $R$ and satisfies the following property: for all Levi
subgroups $\alg M\subseteq \alg L\subseteq \alg L'$ in $\alg G$, the diagram
\begin{equation}\label{eq:xiLM-commute}
\begin{tikzcd}
\Hecke_R(M,G) \ar[r,"\xi^G_{L,M}"] \ar[d,"\theta^{L',G}_M"'] & \Hecke_R(L,G)
\ar[d,"\theta^{L',G}_L"]\\
\Hecke_R(M,L') \ar[r,"\xi^{L'}_{L,M}"'] & \Hecke_R(L,L')
\end{tikzcd}
\end{equation}
commutes, \ie $\theta^{L',G}_L \circ \xi^G_{L,M} = \xi^{L'}_{L,M} \circ
\theta^{L',G}_M$. Moreover, $\xi^G_{L,M}$ is injective.
\end{lem}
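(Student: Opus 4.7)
The plan is to define $\xi^G_{L,M}$ by the $L' = L$ case of the commutativity requirement, namely as the unique $R$-linear map satisfying
\[
\theta^{L,G}_L \circ \xi^G_{L,M} = \xi^L_{L,M} \circ \theta^{L,G}_M,
\]
where $\xi^L_{L,M}\colon \Hecke_R(M,L)\to \Hecke_R(L) = \Hecke_R(L,L)$ is the instance of \eqref{eq:Xi} applied to the pair $\alg M \subseteq \alg L$. Because $\theta^{L,G}_L$ is literally the inclusion of an $R$-submodule of $\Hecke_R(L)$ (Lemma~\ref{lem:thetaLG}) and hence injective, this equation does pin down $\xi^G_{L,M}$ uniquely \emph{provided} the right-hand side takes values in the image of $\theta^{L,G}_L$; and once that holds, $\xi^G_{L,M}$ is automatically an $R$-algebra morphism since $\xi^L_{L,M}\circ \theta^{L,G}_M$ is one.

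The crux is the image condition, for which it suffices to argue over $R = \Z$ and then set $\xi^G_{L,M} \coloneqq \id_R \otimes \xi^G_{L,M,\Z}$ (which simultaneously yields naturality in $R$). Write $P_L$ for the parabolic of $\alg G$ with Levi $\alg L$. The analogue of \eqref{eq:xi-explicit-2} for the pair $\alg M \subseteq \alg L$ in place of $\alg M \subseteq \alg G$ gives, for each $w \in W_M(1)$,
\[
\xi^L_{L,M}\bigl(\tau^{M,L}_w\bigr) = T^L_w + \sum_{w' <_L w} c_{w'}\, T^L_{w'}, \qquad c_{w'} \in \Z,
\]
where $<_L$ is the Bruhat order on $W_L(1)$. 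Multiplying by $\mu_{U_{P_L}}(w)$ and using $\theta^{L,G}_M(\tau^{M,G}_w) = \mu_{U_{P_L}}(w)\, \tau^{M,L}_w$ from Lemma~\ref{lem:thetaLG}, the composite $\xi^L_{L,M} \circ \theta^{L,G}_M$ sends $\tau^{M,G}_w$ to $\mu_{U_{P_L}}(w)T^L_w + \sum_{w' <_L w} \mu_{U_{P_L}}(w)\, c_{w'}\, T^L_{w'}$. Now Proposition~\ref{prop:mu-prop}.\ref{prop:mu-prop-b}, applied with $\alg L$ and its parabolic $P_L$ in place of $\alg M$ and $\alg P$, shows that $\mu_{U_{P_L}}(w')$ divides $\mu_{U_{P_L}}(w)$ whenever $w' \leq_L w$ in $W_L(1)$. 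Hence every coefficient $\mu_{U_{P_L}}(w)\, c_{w'}$ is an integer multiple of $\mu_{U_{P_L}}(w')$, so the term $\mu_{U_{P_L}}(w) c_{w'} T^L_{w'}$ is an integer multiple of $\tau^{L,G}_{w'}$, and the whole expression lies in $\theta^{L,G}_L\bigl(\Hecke_\Z(L,G)\bigr)$. Reading off the coefficients yields the explicit formula
\[
\xi^G_{L,M,\Z}\bigl(\tau^{M,G}_w\bigr) = \tau^{L,G}_w + \sum_{w' <_L w} \frac{\mu_{U_{P_L}}(w)}{\mu_{U_{P_L}}(w')}\, c_{w'}\, \tau^{L,G}_{w'}.
\]

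Injectivity of $\xi^G_{L,M}$ then follows from the defining identity, because both $\xi^L_{L,M}$ (by unitriangularity in \eqref{eq:xi-explicit-2}) and $\theta^{L,G}_M$ (an inclusion, Lemma~\ref{lem:thetaLG}) are injective. Finally, for the compatibility with arbitrary $\alg L' \supseteq \alg L$, I would post-compose both sides of the claimed identity $\theta^{L',G}_L \circ \xi^G_{L,M} = \xi^{L'}_{L,M} \circ \theta^{L',G}_M$ with the injective map $\theta^{L,L'}_L$ and invoke the transitivities $\theta^{L,L'}_L \circ \theta^{L',G}_L = \theta^{L,G}_L$ and $\theta^{L,L'}_M \circ \theta^{L',G}_M = \theta^{L,G}_M$ (Lemma~\ref{lem:thetaLG}), together with the defining identity for $\xi^G_{L,M}$ and its $L' = L$ analogue for the triple $\alg M \subseteq \alg L \subseteq \alg L'$, namely $\theta^{L,L'}_L \circ \xi^{L'}_{L,M} = \xi^L_{L,M} \circ \theta^{L,L'}_M$. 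Both sides then collapse to $\xi^L_{L,M} \circ \theta^{L,G}_M$, as required.

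The main obstacle is the divisibility $\mu_{U_{P_L}}(w') \mid \mu_{U_{P_L}}(w)$ for $w' \leq_L w$ in $W_L(1)$, but this is precisely the content of Proposition~\ref{prop:mu-prop}.\ref{prop:mu-prop-b}, which was set up with this kind of application in mind.
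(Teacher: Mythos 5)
Your existence construction (defining $\xi^G_{L,M}$ over $\Z$ via the $L'=L$ compatibility square and then setting $\xi^G_{L,M} = \id_R\otimes\xi^G_{L,M,\Z}$), your triangularity argument combining the $(M\subseteq L)$-analogue of \eqref{eq:xi-explicit-2} with Proposition~\ref{prop:mu-prop}.\ref{prop:mu-prop-b}, the injectivity deduction from the unitriangular formula, and the reduction of the general commutativity to the $L'=L$ square all match the paper's own argument.

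The uniqueness argument, however, has a genuine gap. You assert that $\theta^{L,G}_L$ ``is literally the inclusion of an $R$-submodule of $\Hecke_R(L)$ (Lemma~\ref{lem:thetaLG}) and hence injective,'' but Lemma~\ref{lem:thetaLG} only establishes this for $R = \Z$ (more generally, for $p$-torsionfree $R$). For arbitrary $R$, $\theta^{L,G}_L$ is $\id_R$ tensored with the $\Z$-level inclusion, and since $\Hecke_\Z(L,G)$ is \emph{not} a direct summand of $\Hecke_\Z(L)$, this base change is typically not injective: over $R = \F_p$ it sends $\tau^{L,G}_w \mapsto \mu_{U_{P_L}}(w)\,T^L_w = 0$ whenever $\mu_{U_{P_L}}(w) > 1$. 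So when $R$ has $p$-torsion the $L'=L$ square alone does not pin down $\xi^G_{L,M}$, and you never actually use the ``natural in $R$'' hypothesis in your uniqueness argument, even though that is exactly what is needed. The paper closes this hole in Step~2 of its proof: given arbitrary $R$, choose a surjection $f\colon R'\twoheadrightarrow R$ from a $p$-torsionfree ring $R'$ (for instance $\Z[X_r\mid r\in R]$); the $p$-torsionfree case fixes $\xi^G_{L,M}$ over $R'$, and naturality of any competing candidate together with surjectivity of $f\otimes\id\colon \Hecke_{R'}(M,G)\to\Hecke_R(M,G)$ then forces the map over $R$ to be the base change of the one over $R'$. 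Your construction happens to be natural in $R$, but you must invoke naturality explicitly in the uniqueness argument rather than rely on an injectivity that fails.
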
 
\begin{proof} 
We first construct a unique morphism $\xi^G_{L,M}$, natural in $R$, making the
diagram
\begin{equation}\label{eq:xiLM-commute-2}
\begin{tikzcd}
\Hecke_R(M,G) \ar[d,"\theta^{L,G}_M"'] \ar[r,"\xi^G_{L,M}"] & \Hecke_R(L,G)
\ar[d,"\theta^{L,G}_L"]\\
\Hecke_R(M,L) \ar[r,"\xi^L_{L,M}"'] & \Hecke_R(L)
\end{tikzcd}
\end{equation}
commutative. Afterwards, we check injectivity and that \eqref{eq:xiLM-commute}
commutes.\medskip

\textit{Step~1:} We prove unique existence provided $R$ is $p$-torsionfree. In
this case, $\theta^{L,G}_M$ and $\theta^{L,G}_L$ are the canonical
inclusions, hence uniqueness is clear. We have to show that $\xi^L_{L,M}$ maps
$\Hecke_R(M,G)$ into $\Hecke_R(L,G)$. Let $w\in W_M(1)$. Recall that by
\eqref{eq:xi-explicit-2} we have
\[
\xi^L_{L,M}\bigl(\tau^{M,L}_w\bigr) = T^L_w + \sum_{w'<_Lw} c_{w'}T^L_{w'} \quad
\text{in $\Hecke_R(L)$},
\]
where $<_L$ denotes the Bruhat order in $W_L(1)$. By
Proposition~\ref{prop:mu-prop}.\ref{prop:mu-prop-b} we have $\mu_{U_{P_L}}(w')
\le \mu_{U_{P_L}}(w)$ for all $w'\in W_L(1)$ with $w'<_Lw$. We deduce that
\begin{align*}
\xi^L_{L,M}\bigl(\tau^{M,G}_w\bigr) &= \mu_{U_{P_L}}(w)\cdot
\xi^L_{L,M}\bigl(\tau^{M,L}_w\bigr) = \mu_{U_{P_L}}(w)T^L_w + \sum_{w'<_Lw}
c_{w'}\mu_{U_{P_L}}(w)T^L_{w'}\\
&= \tau^{L,G}_w + \sum_{w'<_Lw} c'_{w'} \tau^{L,G}_{w'}
\end{align*}
lies in $\Hecke_R(L,G)$. This proves existence of an embedding $\xi^{G}_{L,M}
\colon \Hecke_R(M,G)\to \Hecke_R(L,G)$ making \eqref{eq:xiLM-commute-2}
commutative.\medskip

\textit{Step~2:} We prove unique existence for general $R$. Existence follows
from step~1 by extension of scalars from $\Z$ to $R$. (If $R$ is $p$-torsionfree
this construction coincides with the one in step~1 by the uniqueness
assertion.) We have to prove uniqueness for general $R$. Take a surjection
$f\colon R'\twoheadrightarrow R$ for some $p$-torsionfree ring $R'$ (\eg the
large polynomial ring $\Z[X_r\mid r\in R]$). By naturality of the diagram
\[
\begin{tikzcd}
\Hecke_{R'}(M,G) \ar[d,two heads,"f\otimes \id"'] \ar[r,"\xi^G_{L,M}"] &
\Hecke_{R'}(L,G) \ar[d,two heads, "f\otimes\id"] \\
\Hecke_R(M,G) \ar[r,"\xi^G_{L,M}"'] & \Hecke_R(L,G)
\end{tikzcd}
\]
it follows that $\xi^G_{L,M}$ is uniquely determined by the naturality
requirement.\medskip

\textit{Step~3:} Injectivity of $\xi^G_{L,M}$. By construction we have
\begin{equation}\label{eq:xi-explicit-3}
\xi^G_{L,M}\bigl(\tau^{M,G}_w\bigr) = \tau^{L,G}_w + \sum_{w'<_Lw} c'_{w'}
\tau^{L,G}_{w'},\qquad \text{for all $w\in W_M(1)$,}
\end{equation}
for certain $c'_{w'}\in R$. In particular, $\xi^G_{L,M}$ is injective. \medskip

\textit{Step~4:} Commutativity of~\eqref{eq:xiLM-commute}. By naturality
we may assume $R = \Z$. The outer and lower square in
\[
\begin{tikzcd}
\Hecke_\Z(M, G) \ar[dd, bend right,start anchor=south west, end anchor= north
west, "\theta^{L,G}_M"'] \ar[d,"\theta^{L',G}_M"']
\ar[r,"\xi^G_{L,M}"] & \Hecke_\Z(L,G) \ar[d,"\theta^{L',G}_L"] \ar[dd, bend
left, start anchor=south east, end anchor=north east, "\theta^{L,G}_L"] \\
\Hecke_\Z(M,L') \ar[d,"\theta^{L,L'}_M"'] \ar[r,"\xi^{L'}_{L,M}"] &
\Hecke_\Z(L,L') \ar[d,"\theta^{L,L'}_L"] \\
\Hecke_\Z(M,L) \ar[r,"\xi^L_{L,M}"'] & \Hecke_\Z(L)
\end{tikzcd}
\]
commute by construction. By Lemma~\ref{lem:thetaLG}, and since $\theta^{L,L'}_L$
is injective, the upper square commutes.
\end{proof} 

\begin{lem}\label{lem:xi-composition} 
Let $\alg M\subseteq \alg L\subseteq \alg L'$ be Levi subgroups in $\alg G$.
Then the diagram
\begin{equation}\label{eq:xi-composition}
\begin{tikzcd}
\Hecke_R(M,G) \ar[r,"\xi^G_{L,M}"] \ar[dr,"\xi^G_{L',M}"'] & \Hecke_R(L,G)
\ar[d,"\xi^G_{L',L}"]\\
& \Hecke_R(L',G)
\end{tikzcd}
\end{equation}
commutes, \ie $\xi^G_{L',M} = \xi^G_{L',L}\circ \xi^G_{L,M}$.
\end{lem}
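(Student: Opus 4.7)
The plan is to reduce, via naturality in $R$ (as in step~2 of the proof of Lemma~\ref{lem:xiLM}), to the case where $R$ is $p$-torsionfree; in that case all the maps $\xi^G_{G,?}$ are injective by \eqref{eq:xi-explicit-2}, which allows for cancellation at the end of the argument.

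I would first establish the special case $L'=G$, namely the identity $\xi^G_{G,M}=\xi^G_{G,L}\circ \xi^G_{L,M}$. By Proposition~\ref{prop:Xi}, the map $\xi^G_{G,M}$ is the \emph{unique} $R$-algebra extension of $\xi^+\colon \Hecke_R(M^{+,G})\to \Hecke_R(G)$ to the subalgebra $\Hecke_R(M,G)=\Image(\Theta^P_M)$, so it suffices to show that the composite $\xi^G_{G,L}\circ \xi^G_{L,M}$ agrees with $\xi^+$ on $\Hecke_R(M^{+,G})$. Given $m\in M^{+,G}$, Proposition~\ref{prop:changing} factors $\mu_{U_P}(m)=\mu_{U_{P_L}}(m)\cdot \mu_{U_{P\cap L}}(m)$; since $\mu_{U_P}(m)=1$ and both factors are non-negative powers of $q$, both equal $1$, hence $m\in M^{+,L}$ and $m\in L^{+,G}$. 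The characterizing square of Lemma~\ref{lem:xiLM} for $\xi^G_{L,M}$, together with $\xi^L_{L,M}(T^M_m)=T^L_m$ (because $\xi^L_{L,M}$ restricts to $\xi^+$ on $\Hecke_R(M^{+,L})$) and the injectivity of $\theta^{L,G}_L$, pins down $\xi^G_{L,M}(T^M_m)=T^L_m\in \Hecke_R(L,G)$. The analogous argument then gives $\xi^G_{G,L}(T^L_m)=T_m$, so the composite sends $T^M_m$ to $T_m=\xi^+(T^M_m)$, as required.

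For the general case I would post-compose both sides of the desired equation with $\xi^G_{G,L'}$. Three applications of the special case just proved — to the triples $(M,L',G)$, $(L,L',G)$, and $(M,L,G)$ — show that both $\xi^G_{G,L'}\circ \xi^G_{L',M}$ and $\xi^G_{G,L'}\circ \xi^G_{L',L}\circ \xi^G_{L,M}$ coincide with $\xi^G_{G,M}$. Injectivity of $\xi^G_{G,L'}$ then yields $\xi^G_{L',M}=\xi^G_{L',L}\circ \xi^G_{L,M}$.

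The main obstacle I anticipate is the bookkeeping in the special case: one must track positivity with respect to each of the three Levis $M\subseteq L\subseteq G$ simultaneously and verify that the characterizing diagrams of Lemma~\ref{lem:xiLM} interact correctly with the embeddings of the positive subalgebras. Once this is in place, the bootstrap from the special case to the general case is formal.
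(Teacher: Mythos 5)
Your proof is correct, and it takes a genuinely different route from the paper's. The paper reduces by naturality to $R=\Z$ and then to $R=\Z[p^{-1}]$, where every $\Hecke_R(M',G)$ coincides with $\Hecke_R(M')$ and each $\xi^G_{L,M}$ reduces to $\xi^L_{L,M}$; it then verifies associativity by an explicit computation with a strictly $M$-positive central element $\lambda$, writing $\xi^L_{L,M}(T^M_w)=(T^L_{n\lambda})^{-1}T^L_{e^{n\lambda}w}$ and checking directly that the two compositions agree. Your argument instead isolates the case $L'=G$ and settles it by the uniqueness clause of Proposition~\ref{prop:Xi}: it suffices to check the composite agrees with $\xi^+$ on the positive subalgebra, which you do by the clean observation that $\mu_{U_P}=\mu_{U_P\cap L}\cdot\mu_{U_{P_L}}$ forces $M^{+,G}\subseteq M^{+,L}\cap L^{+,G}$, combined with the characterizing square~\eqref{eq:xiLM-commute-2} and injectivity of $\theta^{L,G}_L$ (over $p$-torsionfree $R$). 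The general case then falls out formally by post-composing with the injective $\xi^G_{G,L'}$ and applying the special case three times. What your approach buys is that it avoids the explicit localization formulas entirely, trading computation for the abstract uniqueness statement already proven in Proposition~\ref{prop:Xi}; what the paper's approach buys is that it proceeds in one stroke without the bootstrap step. Both rest on the same reduction to $p$-torsionfree coefficients via naturality.
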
 
\begin{proof} 
By naturality it suffices to prove the assertion for $R=\Z$. By naturality, and
since $\Hecke_{\Z}(M', G) \subseteq \Hecke_{\Z[p^{-1}]}(M',G) =
\Hecke_{\Z[p^{-1}]}(M')$ for all Levi subgroups $\alg M'$ in $\alg G$, we may
even assume $R = \Z[p^{-1}]$. Hence, we need to prove commutativity of 
\[
\begin{tikzcd}
\Hecke_{\Z[p^{-1}]}(M) \ar[r,"\xi^L_{L,M}"] \ar[dr,"\xi^{L'}_{L',M}"'] &
\Hecke_{\Z[p^{-1}]} (L) \ar[d,"\xi^{L'}_{L',L}"]\\
& \Hecke_{\Z[p^{-1}]}(L').
\end{tikzcd}
\]
(Notice that $\xi^G_{L,M} = \xi^{L}_{L,M}$ for all Levi subgroups $\alg M
\subseteq \alg L$ in $\alg G$ whenever $p$ is invertible, because then
$\theta^{L,G}_M$ and $\theta^{L,G}_L$ are the identity morphisms in
\eqref{eq:xiLM-commute-2}.)

Let $w\in W_M(1)$ and take a strictly $M$-positive element $\lambda\in
\Lambda(1)$. Let $n\in\Z_{>0}$ with $e^{n\lambda}w\in W_{M^+}(1)$. Then both
$n\lambda$ and $e^{n\lambda}w$ are $M^{+,L}$-, $M^{+,L'}$-, and
$L^{+,L'}$-positive. Hence,
\begin{align*}
\xi^{L}_{L,M}(T^M_w) &= (T^L_{n\lambda})^{-1}\cdot T^L_{e^{n\lambda}w}, &
\xi^{L'}_{L',M}(T^M_w) &= (T^{L'}_{n\lambda})^{-1}\cdot
T^{L'}_{e^{n\lambda}w},\\ 
\xi^{L'}_{L',L}(T^L_{n\lambda}) &= T^{L'}_{n\lambda}, &
\xi^{L'}_{L',L}(T^L_{e^{n\lambda}w}) &= T^{L'}_{e^{n\lambda}w}.
\end{align*}
Therefore, we compute
\[
\bigl(\xi^{L'}_{L',L}\circ \xi^{L}_{L,M}\bigr)\bigl(T^M_{w}\bigr) =
\xi^{L'}_{L',L}\bigl((T^L_{n\lambda})^{-1}\cdot T^L_{e^{n\lambda}w}\bigr) =
(T^{L'}_{n\lambda})^{-1}\cdot T^{L'}_{e^{n\lambda}w} = \xi^{L'}_{L',M}(T^M_w).
\qedhere
\]
\end{proof} 

\subsection{Transitivity of parabolic induction} 
\label{subsec:transitivity}
\begin{prop}\label{prop:tensor} 
Let $\alg M\subseteq \alg L\subseteq \alg L'$ be Levi subgroups in $\alg G$.
The canonical map
\begin{align}\label{eq:tensor-1}
\Hecke_R(M,L') \otimes_{\Hecke_R(M,G)} \Hecke_R(L,G) &\longrightarrow
\Hecke_R(L,L'), \\
x\otimes y &\longmapsto \xi^{L'}_{L,M}(x)\cdot \theta^{L',G}_L(y) \notag
\end{align}
is an isomorphism of $\Hecke_R(M,L')$-$\Hecke_R(L,G)$-bimodules.\footnote{The
maps $\Hecke_R(M,G)\to \Hecke_R(M,L')$ and $\Hecke_R(M,G)\to \Hecke_R(L,G)$ are
the obvious ones, namely $\theta^{L',G}_M$ and $\xi^G_{L,M}$, respectively.
Likewise, the bimodule structure is the obvious one.}
\end{prop}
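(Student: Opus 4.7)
The plan is to reduce the proposition to Proposition~\ref{prop:localization} together with the standard fact that localization commutes with tensor products: if $\varphi\colon A\to B$ is a ring homomorphism and $S\subseteq A$ is a multiplicative subset, then $S^{-1}A\otimes_{A}B \cong \varphi(S)^{-1}B$ canonically. First, I would check that the proposed formula defines a bimodule morphism; its balancing over $\Hecke_R(M,G)$ is exactly the commutativity of diagram~\eqref{eq:xiLM-commute} in Lemma~\ref{lem:xiLM}, and the left and right linearity is immediate since $\xi^{L'}_{L,M}$ and $\theta^{L',G}_L$ are ring homomorphisms.

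The core of the argument is to fix a strictly $L'$-positive element $\lambda\in \Lambda(1)$, lifted by some central element $a\in L'$. By Proposition~\ref{prop:localization}, $\theta^{L',G}_M$ realizes $\Hecke_R(M,L')$ as the localization $\Hecke_R(M,G)\bigl[(\tau^{M,G}_\lambda)^{-1}\bigr]$, and $\theta^{L',G}_L$ realizes $\Hecke_R(L,L')$ as $\Hecke_R(L,G)\bigl[(\tau^{L,G}_\lambda)^{-1}\bigr]$. The crucial identity, which I expect to be the main obstacle, is
\[
\xi^G_{L,M}(\tau^{M,G}_\lambda) = \tau^{L,G}_\lambda.
\]
To verify it, I use that $a$ being central in $L'$ forces $\langle \alpha, \nu(\lambda)\rangle = 0$ for all $\alpha\in \Sigma_{L'}$, while strict $L'$-positivity gives $\langle \alpha, \nu(\lambda)\rangle < 0$ for $\alpha\in \Sigma^+\setminus \Sigma_{L'}$. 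Since $\Sigma_L\subseteq \Sigma_{L'}$, applying Lemma~\ref{lem:mu-prod} to the relevant parabolic contexts shows that both products defining $\mu_{U_{P_L}}(\lambda)$ and $\mu_{U_P\cap L}(\lambda)$ are empty, so both equal $1$. Consequently $\tau^{L,G}_\lambda = T^L_\lambda$, and $\lambda$ is $M$-positive when $M$ is viewed as a Levi inside $L$, so that $\tau^{M,L}_\lambda = T^M_\lambda$ lies in the positive subalgebra on which $\xi^L_{L,M}$ restricts to $\xi^+$. Hence $\xi^L_{L,M}(\tau^{M,L}_\lambda) = T^L_\lambda$. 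The commutative square~\eqref{eq:xiLM-commute-2}, combined with the injectivity of the inclusion $\theta^{L,G}_L$, then yields the identity.

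With this in hand the conclusion is formal: by the localization principle,
\[
\Hecke_R(M,L') \otimes_{\Hecke_R(M,G)} \Hecke_R(L,G) \cong \Hecke_R(L,G)\bigl[(\xi^G_{L,M}(\tau^{M,G}_\lambda))^{-1}\bigr] = \Hecke_R(L,G)\bigl[(\tau^{L,G}_\lambda)^{-1}\bigr] \cong \Hecke_R(L,L').
\]
To see that this composite agrees with the map of the statement, I would invoke~\eqref{eq:xiLM-commute} once more: it asserts that $\xi^{L'}_{L,M}$ is precisely the morphism induced on localizations by $\xi^G_{L,M}$, and $\theta^{L',G}_L$ is the localization map itself. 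Therefore $x\otimes y \mapsto \xi^{L'}_{L,M}(x)\cdot \theta^{L',G}_L(y)$ is the canonical localization isomorphism, and it automatically respects the bimodule structures on both sides.
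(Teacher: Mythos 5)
Your proof is correct and takes essentially the same route as the paper's: realize $\Hecke_R(M,L')$ and $\Hecke_R(L,L')$ as localizations of $\Hecke_R(M,G)$ and $\Hecke_R(L,G)$ at $\tau^{M,G}_\lambda$ and $\tau^{L,G}_\lambda$ via Proposition~\ref{prop:localization}, and then invoke the base-change property of central localization. The only point where you go beyond the paper's terse statement is in making explicit the compatibility $\xi^G_{L,M}(\tau^{M,G}_\lambda)=\tau^{L,G}_\lambda$ that underlies the phrase ``identifies with the canonical map''; your verification (centrality of $a$ in $L'$ kills the indices $\mu_{U_{P_L}}(\lambda)$ and $\mu_{U_P\cap L}(\lambda)$ by Lemma~\ref{lem:mu-prod}, then push through~\eqref{eq:xiLM-commute-2}) is correct, and one should first check it over $\Z$ where $\theta^{L,G}_L$ is genuinely injective, then extend by naturality.
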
 
\begin{proof} 
The map is well-defined, since \eqref{eq:xiLM-commute} commutes, and preserves
the bimodule structure by definition. Let $\lambda\in \Lambda(1)$ be a strictly
$L'$-positive element. By Proposition~\ref{prop:localization} the map
\eqref{eq:tensor-1} identifies with the canonical map
\[
\Hecke_R(M,G)\bigl[(\tau^{M,G}_\lambda)^{-1}\bigr] \otimes_{\Hecke_R(M,G)}
\Hecke_R(L,G) \longrightarrow
\Hecke_R(L,G)\bigl[(\tau^{L,G}_\lambda)^{-1}\bigr],
\]
which is clearly an $R$-linear isomorphism.
\end{proof} 

\begin{thm}\label{thm:tensor} 
Let $\alg L \subseteq \alg L'$ and $\alg M\subseteq \alg M'\subseteq \alg M''$
be Levi subgroups in $\alg G$ with $\alg M\subseteq \alg L$ and $\alg
M'\subseteq \alg L'$. Then the canonical map
\begin{align*}
\Hecke_R(M,L) \dotimes_{\Hecke_R(M,G)} \Hecke_R(M'',G) &\longrightarrow
\Hecke_R(M,L) \dotimes_{\Hecke_R(M,L')} \Hecke_R(M',L') \dotimes_{\Hecke_R(M',G)}
\Hecke_R(M'',G),\\
x\dotimes y &\longmapsto x\dotimes 1\dotimes y
\end{align*}
is an isomorphism of $\Hecke_R(M,L)$-$\Hecke_R(M'',G)$-bimodules.
\end{thm}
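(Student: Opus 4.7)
The plan is to recognize the middle factor $\Hecke_R(M', L')$ on the right-hand side as itself a tensor product via Proposition~\ref{prop:tensor}, and then to collapse the resulting iterated tensor product by associativity.

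First, I would apply Proposition~\ref{prop:tensor} to the chain $\alg M \subseteq \alg M' \subseteq \alg L'$ of Levi subgroups inside $\alg G$; this is legitimate because the theorem's hypotheses $\alg M \subseteq \alg M'$ and $\alg M' \subseteq \alg L'$ are precisely what is needed. The proposition then supplies a canonical isomorphism of $\Hecke_R(M, L')$-$\Hecke_R(M', G)$-bimodules
$$\Hecke_R(M, L') \otimes_{\Hecke_R(M, G)} \Hecke_R(M', G) \cong \Hecke_R(M', L'),$$
given by $x \otimes y \mapsto \xi^{L'}_{M', M}(x) \cdot \theta^{L', G}_{M'}(y)$, which in particular sends $1 \otimes 1$ to $1$. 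The bimodule actions match those implicit in the theorem: $\Hecke_R(M, L')$ acts on $\Hecke_R(M', L')$ on the left through $\xi^{L'}_{M', M}$, while $\Hecke_R(M', G)$ acts on the right through $\theta^{L', G}_{M'}$.

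Next, I would substitute this decomposition into the right-hand side of the theorem and invoke associativity of the tensor product to rewrite the RHS as
$$\bigl(\Hecke_R(M, L) \otimes_{\Hecke_R(M, L')} \Hecke_R(M, L')\bigr) \otimes_{\Hecke_R(M, G)} \bigl(\Hecke_R(M', G) \otimes_{\Hecke_R(M', G)} \Hecke_R(M'', G)\bigr).$$
The two canonical identifications $X \otimes_A A \cong X$ then collapse this expression to $\Hecke_R(M, L) \otimes_{\Hecke_R(M, G)} \Hecke_R(M'', G)$, which is the LHS.

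Finally, I would verify that the composite isomorphism RHS $\cong$ LHS obtained this way is inverse to the map $x \otimes y \mapsto x \otimes 1 \otimes y$ stated in the theorem. Since $1 \in \Hecke_R(M', L')$ corresponds to $1 \otimes 1$ under the isomorphism of Proposition~\ref{prop:tensor}, the element $x \otimes 1 \otimes y$ is first identified with $x \otimes (1 \otimes 1) \otimes y$, and then, after the two associativity-and-cancellation steps, with $x \otimes y$ in the LHS. The main substantive input is Proposition~\ref{prop:tensor}; the compatibility of bimodule structures needed to substitute it into the middle of the RHS, which might look like the delicate point, follows directly from the commutativity relations recorded in Lemmas~\ref{lem:thetaLG}, \ref{lem:xiLM}, and \ref{lem:xi-composition}.
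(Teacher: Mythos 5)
Your proof is correct and is essentially the same argument as the paper's, just run in the opposite direction: the paper starts from the left-hand side, inserts $\Hecke_R(M,L')\otimes_{\Hecke_R(M,L')}(\cdots)\otimes_{\Hecke_R(M',G)}\Hecke_R(M',G)$ by the canonical identifications, and then contracts the middle three factors via Proposition~\ref{prop:tensor} applied to $\alg M\subseteq \alg M'\subseteq \alg L'$, whereas you start from the right-hand side, expand $\Hecke_R(M',L')$ by the same instance of Proposition~\ref{prop:tensor}, and then contract by associativity and the canonical identifications. The key input (Proposition~\ref{prop:tensor} on the chain $\alg M\subseteq\alg M'\subseteq\alg L'$) and the auxiliary compatibilities from Lemmas~\ref{lem:thetaLG}, \ref{lem:xiLM}, \ref{lem:xi-composition} needed to match up the $\Hecke_R(M,G)$-module structures are identical in both; the only cosmetic slip is writing $X\otimes_A A\cong X$ for both collapses when the second one is $A\otimes_A X\cong X$.
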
 
\begin{proof} 
There are natural isomorphisms
\begin{align*}
\Hecke_R(M,L)&\dotimes_{\Hecke_R(M,G)} \Hecke_R(M'',G)\\ 
&\cong \Hecke_R(M,L) \dotimes_{\Hecke_R(M,L')} \Hecke_R(M,L')
\dotimes_{\Hecke_R(M,G)} \Hecke_R(M',G) \dotimes_{\Hecke_R(M',G)}
\Hecke_R(M'',G)\\
&\cong \Hecke_R(M,L)\dotimes_{\Hecke_R(M,L')} \Hecke_R(M',L')
\dotimes_{\Hecke_R(M',G)} \Hecke_R(M'',G),
\end{align*}
the second isomorphism being given by Proposition~\ref{prop:tensor}. The
composite sends $x\otimes y \mapsto x\otimes 1\otimes y$.
\end{proof} 

As an application we give another proof of the transitivity of parabolic
induction, which is originally due to Vign\'eras
\cite[Proposition~4.3]{Vigneras.2015}.

\begin{cor}\label{cor:transitivity} 
Let $\alg M\subseteq \alg L$ be Levi subgroups in $\alg G$. Let $\module m$ be a
right $\Hecke_R(M)$-module. Then there is a natural isomorphism of right
$\Hecke_R(G)$-modules
\[
\module m\otimes_{\Hecke_R(M^{+,L})} \Hecke_R(L) \otimes_{\Hecke_R(L^+)}
\Hecke_R(G) \cong \module m\otimes_{\Hecke_R(M^+)} \Hecke_R(G).
\]
\end{cor}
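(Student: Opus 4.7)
The strategy is to reduce the claim to a single application of Theorem~\ref{thm:tensor}. As noted at the start of this section, both morphisms $\Theta^P_M$ and $\Xi^P_G$ factor through $\Hecke_R(M,G)$, so the parabolic induction functor from $M$ to $G$ may equivalently be written $\module m\mapsto \module m\otimes_{\Hecke_R(M,G)}\Hecke_R(G)$. Combining this observation with Theorem~\ref{thm:parabolic-induction}, applied to each of the three pairs $(M,G)$, $(M,L)$, and $(L,G)$, the claim becomes equivalent to a natural isomorphism
\[
\module m\otimes_{\Hecke_R(M,L)}\Hecke_R(L)\otimes_{\Hecke_R(L,G)}\Hecke_R(G) \;\cong\; \module m\otimes_{\Hecke_R(M,G)}\Hecke_R(G)
\]
of right $\Hecke_R(G)$-modules, natural in $\module m$.

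To produce this, I would invoke Theorem~\ref{thm:tensor} with the specialization $\alg L'\coloneqq \alg L$, $\alg M'\coloneqq \alg L$, and $\alg M''\coloneqq \alg G$, keeping $\alg M$ and $\alg L$ as given; the containment hypotheses then reduce to the chain $\alg M\subseteq \alg L\subseteq \alg G$ supplied by the corollary. Using $\Hecke_R(L,L) = \Hecke_R(L)$ and absorbing the trivial $\Hecke_R(M,L)$-factor, the conclusion of the theorem simplifies to a canonical isomorphism of $\Hecke_R(M,L)$-$\Hecke_R(G)$-bimodules
\[
\Hecke_R(M,L)\otimes_{\Hecke_R(M,G)}\Hecke_R(G) \;\cong\; \Hecke_R(L)\otimes_{\Hecke_R(L,G)}\Hecke_R(G).
\]
Tensoring on the left with $\module m$ over $\Hecke_R(M,L)$ yields the previously displayed isomorphism, naturality in $\module m$ being automatic from functoriality of the tensor product.

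I do not anticipate any real obstacle: the genuine work has already been done in Theorems~\ref{thm:parabolic-induction} and~\ref{thm:tensor}. The one point that requires care is checking that the various bimodule structures match under the identifications just used---for instance, that the $\Hecke_R(M,L)$-action on $\Hecke_R(L)$ arising from Theorem~\ref{thm:tensor} really is the one induced by $\xi^L_{L,M}$, and that restriction of the $\Hecke_R(M,L)$-structure on $\module m$ along $\theta^{L,G}_M$ recovers the original $\Hecke_R(M,G)$-structure. Both are consequences of the compatibility diagrams established in Lemmas~\ref{lem:thetaLG}, \ref{lem:xiLM}, and~\ref{lem:xi-composition}, so no fresh computation is required.
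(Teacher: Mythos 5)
Your proposal is correct and follows essentially the same route as the paper: reduce via Theorem~\ref{thm:parabolic-induction} to tensor products over the algebras $\Hecke_R(M,L')$ and then apply Theorem~\ref{thm:tensor}. The only cosmetic difference is that the paper first reduces to the universal case $\module m = \Hecke_R(M)$ and specializes Theorem~\ref{thm:tensor} with $\alg L^{\mathrm{thm}} = \alg M$ (producing the bimodule isomorphism with $\Hecke_R(M)$ on the left), whereas you specialize with $\alg L^{\mathrm{thm}} = \alg L$ and tensor $\module m$ against the resulting $\Hecke_R(M,L)$-$\Hecke_R(G)$-bimodule isomorphism directly; the bimodule compatibilities you flag are indeed supplied by Lemma~\ref{lem:thetaLG}, so there is no gap.
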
 
\begin{proof} 
Since there is a natural right $\Hecke_R(G)$-linear isomorphism 
\[
\module m\otimes_{\Hecke_R(M^+)} \Hecke_R(G) \cong \module m
\otimes_{\Hecke_R(M)} \Hecke_R(M) \otimes_{\Hecke_R(M^+)} \Hecke_R(G)
\]
(and similarly with $(M^+,G)$ replaced by $(M^{+,L},L)$), we are reduced to
proving the assertion for $\module m = \Hecke_R(M)$. Now, by
Theorems~\ref{thm:parabolic-induction} and~\ref{thm:tensor} there are
$\Hecke_R(M)$-$\Hecke_R(G)$-bimodule isomorphisms
\begin{align*}
\Hecke_R(M)\otimes_{\Hecke_R(M^+)} \Hecke_R(G) &\cong \Hecke_R(M)
\otimes_{\Hecke_R(M,G)} \Hecke_R(G)\\
&\cong \Hecke_R(M) \otimes_{\Hecke_R(M,L)} \Hecke_R(L)
\otimes_{\Hecke_R(L,G)} \Hecke_R(G)\\
&\cong \Hecke_R(M)\otimes_{\Hecke_R(M^{+,L})} \Hecke_R(L)
\otimes_{\Hecke_R(L^+)} \Hecke_R(G).\qedhere
\end{align*}
\end{proof} 

\subsection{Alcove walk bases and a filtration}\label{subsec:filtration} 
We finish by describing a natural $\Z_{\ge0}$-filtration on the $R$-algebra
$\Hecke_R(M,G)$ coming from $\mu_{U_P}\colon W_M(1)\to \Z_{\ge0}$. To do this we
need to describe alcove walk bases for $\Hecke_R(M,G)$.

\begin{defn} 
Let $o$ be an orientation of $(\apartment_M,\hyperplanes_M)$
\cite[5.2]{Vigneras.2016}. Let $(E_o(w))_{w\in W_M(1)}$ be the associated alcove
walk basis in $\Hecke_\Z(M)$ \cite[Definition~5.22]{Vigneras.2016}. We define
\begin{equation}\label{eq:alcove-walk}
E_o^{M,G}(w) \coloneqq 1\otimes \mu_{U_P}(w)\cdot E_o(w) \in
\Hecke_R(M,G),\qquad \text{for all $w\in W_M(1)$.}
\end{equation}
\end{defn} 

\begin{rmk} 
The element $E_o^{M,G}(w)$ is indeed well-defined: since $E_o(w) = T^M_w +
\sum_{w'<_Mw} c_{w'}T^M_{w'}$, for certain $c_{w'}\in \Z$,
\cite[Corollary~5.26]{Vigneras.2016} and by
Proposition~\ref{prop:mu-prop}.\ref{prop:mu-prop-b}, we even have
\begin{equation}\label{eq:alcove-walk-2}
E^{M,G}_o(w) = \tau^{M,G}_w + \sum_{w'<_Mw} c'_{w'}\cdot
\tau^{M,G}_{w'} \in \Hecke_R(M,G),
\end{equation}
where $c'_{w'}$ is the image of $\frac{\mu_{U_P}(w)}{\mu_{U_P}(w')}\cdot c_{w'}$
in $R$. Hence, $(E^{M,G}_o(w))_{w\in W_M(1)}$ is an $R$-basis of
$\Hecke_R(M,G)$.
\end{rmk} 

\begin{lem}\label{lem:alcove-walk-product} 
Let $o$ be an orientation of $(\apartment_M, \hyperplanes_M)$. Let $v,w\in
W_M(1)$. Then
\[
E^{M,G}_o(v)\cdot E^{M,G}_{o\bullet v}(w) = q_{v,w}\cdot E^{M,G}_o(vw).
\]
\end{lem}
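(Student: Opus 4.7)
The plan is to reduce to the case $R=\Z$ and verify the identity inside the ambient algebra $\Hecke_\Z(M)$, where $E^{M,G}_o(w) = \mu_{U_P}(w)\cdot E_o(w)$ is just an integer rescaling of the usual alcove walk element. Since $\Hecke_R(M,G) = R\otimes_\Z \Hecke_\Z(M,G)$ by construction, and since each of $E^{M,G}_o(v)$, $E^{M,G}_{o\bullet v}(w)$, $E^{M,G}_o(vw)$ is the image under $1\otimes(\cdot)$ of a corresponding element in $\Hecke_\Z(M,G)\subseteq \Hecke_\Z(M)$, it suffices to check the identity with $R=\Z$.

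In $\Hecke_\Z(M)$, I would first invoke the standard product formula for the alcove walk basis (the $M$-analogue of \cite[Theorem~5.25]{Vigneras.2016}, the same formula applied to $G$ in the proof of Lemma~\ref{lem:fundamental}):
\[
E_o(v)\cdot E_{o\bullet v}(w) = q_{M,v,w}\cdot E_o(vw).
\]
Multiplying through by the integer scalar $\mu_{U_P}(v)\mu_{U_P}(w)$ gives
\[
E^{M,G}_o(v)\cdot E^{M,G}_{o\bullet v}(w) = \mu_{U_P}(v)\mu_{U_P}(w)\cdot q_{M,v,w}\cdot E_o(vw).
\]

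The remaining step is purely bookkeeping on scalars. Corollary~\ref{cor:mu-prop} asserts
\[
q_{v,w} = \frac{\mu_{U_P}(v)\mu_{U_P}(w)}{\mu_{U_P}(vw)}\cdot q_{M,v,w},
\]
which rearranges (using Proposition~\ref{prop:inequality}/Corollary~\ref{cor:mu-prop} to ensure the quotient is an integer) to
\[
\mu_{U_P}(v)\mu_{U_P}(w)\cdot q_{M,v,w} = q_{v,w}\cdot \mu_{U_P}(vw).
\]
Substituting, the right-hand side of the product becomes $q_{v,w}\cdot \mu_{U_P}(vw)\cdot E_o(vw) = q_{v,w}\cdot E^{M,G}_o(vw)$, which is the desired formula.

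There is no genuine obstacle here; the lemma is precisely the compatibility between the $\mu_{U_P}$-rescaling defining $\Hecke_\Z(M,G)$ and the standard multiplicative structure on alcove walk bases, and Corollary~\ref{cor:mu-prop} has been set up exactly to convert the $M$-length data $q_{M,v,w}$ into the $G$-length data $q_{v,w}$ via the factor $\mu_{U_P}$.
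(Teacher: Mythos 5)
Your proposal is correct and is essentially the paper's own proof: reduce to $R=\Z$, apply Vign\'eras's alcove-walk product formula $E_o(v)\cdot E_{o\bullet v}(w) = q_{M,v,w}\cdot E_o(vw)$, then use Corollary~\ref{cor:mu-prop} to convert $\mu_{U_P}(v)\mu_{U_P}(w)\,q_{M,v,w}$ into $q_{v,w}\,\mu_{U_P}(vw)$. The only difference is cosmetic: the paper compresses the scalar bookkeeping into one displayed equation while you spell it out step by step.
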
 
\begin{proof} 
We may assume $R = \Z$. Then $E_o(v)\cdot E_{o\bullet v}(w) = q_{M,v,w}\cdot
E_o(vw)$ by \cite[Theorem~5.25]{Vigneras.2016}. Corollary~\ref{cor:mu-prop}
shows
\[
E^{M,G}_o(v)\cdot E^{M,G}_{o\bullet v}(w) =
\frac{\mu_{U_P}(v)\mu_{U_P}(w)}{\mu_{U_P}(vw)}\cdot q_{M,v,w} E^{M,G}_o(vw) =
q_{v,w}\cdot E^{M,G}_o(vw). \qedhere
\]
\end{proof} 

\begin{defn} 
A \emph{$\Z_{\ge0}$-filtration} of an $R$-algebra $A$ is a family $(\filtration_iA)_{i\in\Z_{\ge0}}$ of $R$-submodules satisfying
\begin{enumerate}[label=$-$]
\item $\filtration_iA \subseteq \filtration_{i+1}A$ for all $i\ge0$;
\item $\filtration_iA\cdot \filtration_jA \subseteq \filtration_{i+j}A$ for
all $i,j\ge0$; 
\item $1\in \filtration_0A$; 
\item $A = \cup_{i\ge0}\filtration_iA$.
\end{enumerate}
\end{defn} 

\begin{prop}\label{prop:H(M,G)-filtration} 
The free $R$-submodules $\filtration^{M,G}_n$ of $\Hecke_R(M,G)$ generated by
$\{\tau^{M,G}_w\}_{\substack{w\in W_M(1),\\ \mu_{U_P}(w)\le q^n}}$ define a
$\Z_{\ge0}$-filtration on $\Hecke_R(M,G)$. Moreover,
$\filtration^{M,G}_0 \cong \Hecke_R(M^+)$ via $\theta^{M,G}_M$.
\end{prop}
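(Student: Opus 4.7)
The filtration axioms of nesting, containing the identity, and exhaustion are immediate: $\filtration^{M,G}_n \subseteq \filtration^{M,G}_{n+1}$ is tautological; $\tau^{M,G}_1 = 1 \in \filtration^{M,G}_0$ since $\mu_{U_P}(1) = 1$; and because $\mu_{U_P}$ takes values in $q^{\Z_{\ge 0}}$, every basis element $\tau^{M,G}_w$ lies in $\filtration^{M,G}_n$ for $n$ sufficiently large, giving exhaustion. The substantive axiom is multiplicativity $\filtration^{M,G}_i \cdot \filtration^{M,G}_j \subseteq \filtration^{M,G}_{i+j}$. By naturality in $R$ it suffices to treat $R = \Z$, and there to show that for every pair $v,w \in W_M(1)$ the product $\tau^{M,G}_v \cdot \tau^{M,G}_w$ is a $\Z$-linear combination of $\tau^{M,G}_{w''}$ with $\mu_{U_P}(w'') \le \mu_{U_P}(v)\cdot \mu_{U_P}(w)$.

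The plan for this support bound is to pass to the alcove walk basis. Fix an orientation $o$ of $(\apartment_M, \hyperplanes_M)$. By~\eqref{eq:alcove-walk-2}, the change of basis from $(\tau^{M,G}_{w'})_{w'\in W_M(1)}$ to $(E^{M,G}_o(w'))_{w'\in W_M(1)}$ is unitriangular with respect to $\le_M$, so its inverse is also $\le_M$-triangular; hence I can write $\tau^{M,G}_v = \sum_{v' \le_M v} \alpha_{v'}\, E^{M,G}_o(v')$ with $\alpha_{v'} \in \Z$. For each such $v'$, I expand separately $\tau^{M,G}_w = \sum_{w' \le_M w} \beta^{v'}_{w'}\, E^{M,G}_{o\bullet v'}(w')$ in the alcove walk basis for the orientation $o\bullet v'$ (chosen precisely so that Lemma~\ref{lem:alcove-walk-product} applies). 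Multiplying out and invoking that lemma yields
\begin{equation*}
\tau^{M,G}_v \cdot \tau^{M,G}_w \;=\; \sum_{v' \le_M v,\, w' \le_M w} \alpha_{v'}\, \beta^{v'}_{w'}\, q_{v',w'}\, E^{M,G}_o(v'w'),
\end{equation*}
and expanding each $E^{M,G}_o(v'w')$ back via~\eqref{eq:alcove-walk-2} presents the product as a $\Z$-linear combination of $\tau^{M,G}_{w''}$ with $w'' \le_M v'w'$ for some $v' \le_M v$ and $w' \le_M w$. Combining Proposition~\ref{prop:mu-prop}.\ref{prop:mu-prop-b} (monotonicity of $\mu_{U_P}$ with respect to $\le_M$) with Corollary~\ref{cor:mu-prop} (submultiplicativity) then gives
\begin{equation*}
\mu_{U_P}(w'') \;\le\; \mu_{U_P}(v'w') \;\le\; \mu_{U_P}(v')\,\mu_{U_P}(w') \;\le\; \mu_{U_P}(v)\,\mu_{U_P}(w),
\end{equation*}
as required.

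For the identification $\filtration^{M,G}_0 \cong \Hecke_R(M^+)$ via $\theta^{M,G}_M$: the condition $\mu_{U_P}(w) \le q^0 = 1$ forces $\mu_{U_P}(w) = 1$, that is, $w \in W_{M^+}(1)$. For such $w$ one has $\tau^{M,G}_w = T^M_w$, and $\theta^{M,G}_M(\tau^{M,G}_w) = T^M_w$; hence $\theta^{M,G}_M$ matches the $R$-basis $(\tau^{M,G}_w)_{w \in W_{M^+}(1)}$ of $\filtration^{M,G}_0$ bijectively with the $R$-basis $(T^M_w)_{w\in W_{M^+}(1)}$ of $\Hecke_R(M^+) \subseteq \Hecke_R(M)$, producing the claimed $R$-algebra isomorphism.

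The main technical point will be the support bound for $\tau^{M,G}_v \cdot \tau^{M,G}_w$. The slight awkwardness of having to expand the second factor in the $(o\bullet v')$-basis, depending on $v'$, is precisely why one needs the full monotonicity of $\mu_{U_P}$ on $W_M$ (and not merely multiplicativity on products); once that is invoked, everything collapses cleanly into the desired Bruhat-interval-type bound.
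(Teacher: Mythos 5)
Your proof is correct and follows essentially the same approach as the paper: both arguments pass to the alcove-walk bases, use the $\le_M$-unitriangular change of basis from~\eqref{eq:alcove-walk-2} together with the monotonicity of $\mu_{U_P}$ (Proposition~\ref{prop:mu-prop}.\ref{prop:mu-prop-b}) to control supports, and then combine Lemma~\ref{lem:alcove-walk-product} with the submultiplicativity from Corollary~\ref{cor:mu-prop}. The paper phrases it more compactly by observing that $\{E^{M,G}_o(w)\}_{\mu_{U_P}(w)\le q^n}$ is already an $R$-basis of $\filtration^{M,G}_n$ for \emph{every} orientation $o$ (so one never needs to convert back to the $\tau$-basis), but your more explicit back-and-forth is the same argument unwound.
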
 
\begin{proof} 
The only thing that is not immediately clear is $\filtration^{M,G}_i\cdot
\filtration^{M,G}_j \subseteq \filtration^{M,G}_{i+j}$, for $i,j\ge0$. Given any
orientation $o$ of $(\apartment_M,\hyperplanes_M)$, the set
$\bigl\{E^{M,G}_o(w)\bigr\}_{\substack{w\in W_M(1)\\ \mu_{U_P}(w)\le q^n}}$ is
an $R$-basis of $\filtration^{M,G}_n$ by \eqref{eq:alcove-walk-2} and
Proposition~\ref{prop:mu-prop}.\ref{prop:mu-prop-b}. Hence, the
claim follows from Lemma~\ref{lem:alcove-walk-product} and
Corollary~\ref{cor:mu-prop}.
\end{proof} 

\bibliographystyle{alphaurl}
\bibliography{../references}{}
\end{document}